\newtheorem{theorem}{Theorem}[section]
\newtheorem{lemma}[theorem]{Lemma}
\newtheorem{proposition}[theorem]{Proposition}
\newtheorem{corollary}[theorem]{Corollary}
\theoremstyle{remark}
\newtheorem{question}{Question}
\newtheorem{remark}{Remark}
\newlength{\bibitemsep}\setlength{\bibitemsep}{.4\baselineskip}
\newlength{\bibparskip}\setlength{\bibparskip}{0pt}
\let\oldthebibliography\thebibliography
\renewcommand\thebibliography[1]{%
  \oldthebibliography{#1}%
  \setlength{\parskip}{\bibitemsep}%
  \setlength{\itemsep}{\bibparskip}%
}
\def\N{\mathbb{N}}
\def\Z{\mathbb{Z}}
\def\R{\mathbb{R}}
\def\cH{\mathcal{H}}
\def\m{\!-\!}
\def\p{\!+\!}
\def\O{\mathcal{O}}
\newcommand{\cv}[1]{\underset{#1}{\longrightarrow}}
\newcommand{\Landau}[3]{\underset{#2}{#1}\left(#3\right)} 
\newcommand{\landauT}[3]{\widetilde{#1}_{#2}\left(#3\right)} 
\newcommand{\abs}[1]{\left|#1\right|}
\newcommand{\card}[1]{\left|\left\{#1\right\}\right|}
\def\E{\mathbb{E}}
\newcommand{\expec}[1]{\mathbb{E}\left[#1\right]}
\newcommand{\expecond}[2]{\mathbb{E}\left[\left. #1 \,\right|\, #2 \right]}
\def\P{\mathbf{P}}
\newcommand{\prob}[1]{\mathbf{P}\left(#1\right)}
\newcommand{\probcond}[2]{\mathbf{P}\left(\left. #1 \,\right|\, #2 \right)}
\newcommand{\var}[1]{\mathrm{Var}\left[#1\right]}
\newcommand{\cov}[2]{\mathrm{Cov}\left[#1,#2\right]}
\def\cL{\mathcal{L}}
\def\cF{\mathcal{F}}
\newcommand{\One}[1]{\mathbf{1}_{#1}}
\newcommand{\dtv}[2]{\mathrm{d_{TV}}\left(#1,#2\right)}
\newcommand{\Unif}[1]{{\rm Unif}\left(#1\right)}
\newcommand{\Geom}[1]{{\rm Geom}\left(#1\right)}
\newcommand{\TGeom}[2]{{\rm TruncGeom}\left(#1,#2\right)}
\newcommand{\Normal}[2]{{\cal N}\left(#1,#2\right)}
\def\id{\mathrm{id}}
\def\S{\mathfrak{S}}
\def\rev{\mathrm{rev}}
\newcommand{\inv}[1]{{\mathrm{inv}}\left(#1\right)}
\newcommand{\pat}[2]{{\mathrm{pat}}\left(#1,#2\right)}
\newcommand{\occ}[2]{{\mathrm{Occ}}\left(#1,#2\right)}
\newcommand{\occinj}[2]{{\mathrm{Occ^{inj}}}\left(#1,#2\right)}
\newcommand{\Mallows}[2]{{\mathrm{Mallows}}\left(#1,#2\right)}
\newcommand{\MallowsBlock}[1]{{\mathrm{MBlock}}\left(#1\right)}
\newcommand{\ContinuousMallows}[1]{\overset{\longrightarrow}{\mathrm{Mallows}}\left(#1\right)}
\newcommand{\ContinuousBlock}[1]{\overset{\longrightarrow}{\mathrm{MBlock}}\left(#1\right)}
\newcommand{\dens}[2]{{\mathrm{dens}}\left(#1,#2\right)}
\title{Classical patterns in Mallows permutations}
\author{Victor Dubach}
\date{}
\begin{document}

\maketitle

\begin{abstract}
    We study classical pattern counts in Mallows random permutations with parameters $(n,q_n)$, as $n\to\infty$. 
    We focus on three different regimes for the parameter $q = q_n$.
    When $n^{3/2}(1-q)\to0$, we use coupling techniques to prove that pattern counts in Mallows random permutations satisfy a central limit theorem with the same asymptotic mean and variance as in uniformly random permutations.
    When $q\to1$ and $n(1-q)\to\infty$, we use results on the displacements of permutation points to find the order of magnitude of pattern counts.
    When $q\in(0,1)$ is fixed, we use the regenerative property of the Mallows distribution to compare pattern counts with certain $U$-statistics, and establish central limit theorems.
    We also construct a specific Mallows process, that is a coupling of Mallows distributions with $q$ ranging from $0$ to $1$, for which the process of pattern counts satisfies a functional central limit theorem.
\end{abstract}

\section{Background}

\subsection{Mallows random permutations}

Fix a real parameter $q\ge0$ and a positive integer $n\in\N^*$.
We say that a random permutation $\tau_n$ follows the {\em Mallows distribution} with parameter $q$ on the symmetric group $\S_n$, denoted $\tau_n \sim \Mallows{n}{q}$, if:
\[
\text{for all $\sigma\in\S_n$},\quad
\prob{\tau_n = \sigma} = \frac{q^{\inv{\sigma}}}{Z_{n,q}}
\]
where $\inv{\sigma} := \card{ (i,j)\in[n]^2 \,:\, i<j \,,\, \sigma(i)>\sigma(j) }$ is the number of {\em inversions} of $\sigma$, and $Z_{n,q} := \prod_{k=1}^n \frac{1-q^k}{1-q}$ is a normalizing constant.
The parameter $q$ serves as a \enquote{bias} on the number of inversions of $\tau_n$. 
The number of inversions of $\tau_n$ grows stochastically as $q$ grows:
if $q=0$ then $\tau_n = \id_n := 1\,2 \dots n$ almost surely, if $q=1$ then $\tau_n \sim \Unif{\S_n}$ is uniformly random, and if $q\to\infty$ then $\tau_n \to \id_n^{\rev} := n \dots 2\,1$ in probability.

The Mallows distribution was introduced in \cite{M57}, motivated by ranking problems in statistics.
There actually exists a wide range of Mallows-type models \cite{M57,FV86,Mukherjee_2016,zhong2021mallows} that use various other statistics instead of the number of inversions.
The present model is related to several topics, such as $q$-exchangeability \cite{GO10,Gnedin_Olshanski_2012}, the Bruhat decomposition \cite{Diaconis_Simper_2022}, and the asymmetric simple exclusion process \cite{bufetov2022cutoff,Borodin_Bufetov_2024}.
A number of properties of Mallows random permutations have been studied extensively by many authors:
this includes mixing times of related Markov chains \cite{DR00,benjamini2005mixing}, scaling limits \cite{S09,SW18}, longest monotone subsequences \cite{MS13,BP15,BB17}, the degree process \cite{Bhattacharya_Mukherjee_2017}, the cycle structure \cite{GP18,HMV23}, the secreatary problem \cite{Jones_2020,Pinsky_2022_secretary}, binary search trees \cite{ABC21,C23}, logical limit laws \cite{MSV23}, and more.
When studying the asymptotics of such properties as $n\to\infty$, interesting phase transition phenomena often appear when $q$ is allowed to depend on $n$.

A common question when studying random permutations is that of pattern occurrences and avoidance:
given a fixed permutation $\pi\in\S_r$ and a random permutation $\tau\in\S_n$, what is the distribution of the number of occurrences of $\pi$ as a (classical, consecutive, or vincular) pattern in $\tau$, and what is the probability that $\tau$ avoids it?
This question has been partially addressed for Mallows random permutations.
Central limit theorems have been established for the number of inversions \cite{DR00,Diaconis_Simper_2022}, for the number of descents \cite{borodin2010adding,H22}, and for the number of consecutive pattern occurrences \cite{CDE18}.
Growth rates for the probability of avoiding a consecutive pattern \cite{CDE18} or a classical pattern of length three \cite{P21} have been studied, and thresholds for the appearance of a given pattern have been established in \cite{Bevan_Threlfall_2024}\footnote{
The paper \cite{Bevan_Threlfall_2024} is about random permutations with a given number of inversions rather than Mallows random permutations, but using concentration results on the number of inversions in Mallows random permutations (see e.g.~\cite[Proposition~29]{Bevan_Threlfall_2024}, and \Cref{eq: inversions as sum of independent} here), the results of \cite{Bevan_Threlfall_2024} can be transferred from their model to ours.
}.
However, except for inversions, there seems to be a lack of results concerning the number of occurrences of classical patterns.
This paper initiates this study for three different regimes of $q=q_n$.

Note that these questions have well-studied analogs in the field of random graphs.
Indeed, Mallows random permutations are analogous to Erd\H{o}s--Rényi random graphs (the permutation graph of a Mallows permutation is an Erd\H{o}s--Rényi graph conditioned to be a permutation graph), and pattern occurrences then correspond to subgraph occurrences.
Subgraph counts and subgraph avoidance in Erd\H{o}s--Rényi random graphs have been studied extensively (see e.g.~\cite{janson2011random} and references therein); this further motivates the questions raised here.

\subsection{Previous results on pattern counts}

Let $\tau\in\S_n$ be a permutation and $I = \{i_1< \dots< i_r\} \subseteq [n]$ be a subset of $r$ indices.
The {\em pattern} induced by $\tau$ on $I$, denoted $\pat{I}{\tau}$, is the permutation $\sigma \in \S_r$ defined by:
\[
\text{for all } j,k \in [r],\quad
\sigma(j) < \sigma(k) \Longleftrightarrow \tau\left(i_j\right) < \tau\left(i_k\right) .
\]
Now fix $\pi\in\S_r$.
A {\em classical occurrence} of $\pi$ as a pattern in $\tau$ is a subset $I\subseteq[n]$ such that $\pat{I}{\tau} = \pi$.
For instance, a classical occurrence of the pattern $\id_r := 1\,2\dots r$ is an increasing subsequence of length $r$, and a classical occurrence of the pattern $2\,1$ is an inversion.
The number of classical occurrences of $\pi$ as a pattern in $\tau$ is then:
\[
\occ{\pi}{\tau} := \card{ I\subseteq[n] \,:\, \pat{I}{\tau} = \pi } .
\]
In this paper we are only interested in these classical (not consecutive, nor vincular) pattern occurrences, so we will simply call them pattern occurrences.
They have notable applications to independence testing \cite{Hoeffding_1994} and parameter testing \cite{Hoppen_Kohayakawa_Moreira_Sampaio_2011}.

The question tackled in this paper is that of the asymptotics of $\occ{\pi}{\tau_n}$ as $n\to\infty$, when $\pi\in\S_r$ is fixed and $\tau_n \sim \Mallows{n}{q_n}$.
In this framework, we can assume w.l.o.g.~that $0 < q_n \le 1$. 
Indeed, the reversed permutation $\tau_n^{\rev} := \tau_n(n) \dots \tau_n(2) \, \tau_n(1)$ follows the $\Mallows{n}{q_n^{-1}}$ distribution, and $\occ{\pi}{\tau_n} = \occ{\pi^\rev}{\tau_n^\rev}$ for all patterns $\pi$.
Before stating our results, let us review the state of the art on this question.

\subsubsection{The uniform case}

Since $\Mallows{n}{1} = \Unif{\S_n}$, it is instructive to consider the simpler case of uniformly random permutations before studying general Mallows permutations.
In this case, the asymptotics of $\occ{\pi}{\tau_n}$ are well known:

\begin{theorem}[\cite{JNZ15}]\label{th: CLT occ unif}
    Fix $r\ge2$ and $\pi\in\S_r$.
    For each $n$, let $\tau_n \sim \Unif{\S_n}$.
    Then we have the following convergence in distribution:
    \begin{equation*}
        \frac{\occ{\pi}{\tau_n} - \frac{1}{r!}\binom{n}{r}}{n^{r-1/2}}
        \cv{n\to\infty} \Normal{0}{\nu_\pi^2} ,
    \end{equation*}
    for some explicit $\nu_\pi>0$, where $\Normal{0}{\nu_\pi^2}$ is the centered normal distribution with variance $\nu_\pi^2$.
    This also holds with convergence of all moments, and jointly for any finite family of patterns (with explicit covariances).
\end{theorem}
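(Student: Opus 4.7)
The approach is the standard $U$-statistic route. I would first couple $\tau_n \sim \Unif{\S_n}$ to $n$ i.i.d.~$\Unif{[0,1]}$ samples $X_1, \ldots, X_n$ via the ranking map, which is a distributional identity. Under this coupling,
\[
    \occ{\pi}{\tau_n} \;=\; \sum_{1 \le i_1 < \cdots < i_r \le n} h_\pi(X_{i_1}, \ldots, X_{i_r}),
\]
where $h_\pi(y_1, \ldots, y_r) := \One{(y_1,\ldots,y_r) \text{ has pattern } \pi}$. The right-hand side is a (non-symmetric) $U$-statistic of order $r$ with bounded kernel; one can symmetrize $h_\pi$ over $\S_r$ to recover the classical symmetric $U$-statistic framework without changing the leading-order analysis. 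The mean $\expec{\occ{\pi}{\tau_n}} = \binom{n}{r}/r!$ is immediate, since $r$ i.i.d.~continuous variables realize each of the $r!$ patterns equiprobably.

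Next, I would perform the Hoeffding decomposition and identify the first-order projection
\[
    h_\pi^{(1)}(x) \;:=\; \probcond{(x, X_2, \ldots, X_r) \text{ has pattern } \pi}{X_1 = x} - \frac{1}{r!}.
\]
Setting $k := \pi(1)$, conditioning on $X_1 = x$ forces exactly $k-1$ of the $X_j$ to fall below $x$, at the positions $j \ge 2$ prescribed by $\{j : \pi(j) < k\}$ (probability $x^{k-1}(1-x)^{r-k}$), and the relative orders within the below-$x$ and above-$x$ groups to match $\pi$ (probability $1/((k-1)!(r-k)!)$). Hence
\[
    h_\pi^{(1)}(x) + \frac{1}{r!} \;=\; \frac{x^{k-1}(1-x)^{r-k}}{(k-1)!\,(r-k)!},
\]
which is a non-constant polynomial of degree $r-1$ for every $r \ge 2$. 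Consequently $\sigma_1^2 := \var{h_\pi^{(1)}(X_1)} > 0$, and Hoeffding's classical CLT for non-degenerate $U$-statistics yields the stated convergence with $\nu_\pi^2 = \sigma_1^2/((r-1)!)^2$. A direct second-moment computation confirms that $\var{\occ{\pi}{\tau_n}}$ is of order $n^{2r-1}$, with the dominant contribution coming from pairs of $r$-subsets sharing exactly one index.

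The main step that genuinely needs checking is the non-degeneracy $\nu_\pi>0$, settled by the computation above; everything else reduces to off-the-shelf $U$-statistic machinery. Convergence of all moments follows from the boundedness of $h_\pi$ via the standard $L^p$-control of the Hoeffding components (the $k$-th projection, for $k \ge 2$, contributes only $O(n^{2r-k})$ to the variance, and similarly for higher cumulants). Joint convergence for any finite family of patterns is an application of the multivariate Hoeffding CLT to the vector of $U$-statistics, or equivalently of the Cram\'er--Wold device, noting that a linear combination of the $\occ{\pi_\ell}{\tau_n}$ is again a $U$-statistic with bounded kernel whose first-order projection is the corresponding linear combination of the $h_{\pi_\ell}^{(1)}$; the asymptotic covariances are therefore the explicit integrals $\int_0^1 h_{\pi_\ell}^{(1)}(x) h_{\pi_{\ell'}}^{(1)}(x)\,dx$ divided by $((r-1)!)^2$.
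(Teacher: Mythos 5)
The statement is quoted in the paper from \cite{JNZ15} without proof, so the only question is whether your argument stands on its own — and it has a genuine gap. The statistic $\sum_{i_1<\cdots<i_r} h_\pi(X_{i_1},\dots,X_{i_r})$ is an \emph{asymmetric} $U$-statistic: the argument slots are tied to the index order, and this asymmetry is the whole point. If you symmetrize $h_\pi$ over $\S_r$, as you propose, you get the constant kernel $\tfrac1{r!}$ (for distinct reals exactly one ordering of the arguments realizes $\pi$), so the symmetrized $U$-statistic is the deterministic quantity $\tfrac1{r!}\binom{n}{r}$: symmetrization does not "leave the leading-order analysis unchanged", it destroys the fluctuations entirely. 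Consequently you cannot invoke Hoeffding's CLT for symmetric non-degenerate $U$-statistics using only the first-slot projection $h_\pi^{(1)}$. The Hájek projection of the asymmetric statistic onto $X_m$ involves all $r$ slot projections $h^{(1)}_{\pi,j}(x) := \expec{h_\pi(X_1,\dots,X_{j-1},x,X_{j+1},\dots,X_r)} - \tfrac1{r!}$, weighted by $\binom{m-1}{j-1}\binom{n-m}{r-j}$, and the limiting variance is
$\nu_\pi^2 = \int_0^1 \var{\sum_{j=1}^r \tfrac{t^{j-1}(1-t)^{r-j}}{(j-1)!\,(r-j)!}\, h^{(1)}_{\pi,j}(X)}\,dt$
(this is exactly the asymmetric $U$-statistic CLT of Janson, the same machinery the paper uses in its regenerative-regime proofs via \cite{J18}). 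Your formula $\nu_\pi^2=\sigma_1^2/((r-1)!)^2$ is demonstrably wrong: for $\pi=2\,1$ it gives $\var{X-\tfrac12}=\tfrac1{12}$, whereas the classical value is $\tfrac1{36}$ — which is what the displayed integral yields, since $\int_0^1(1-2t)^2\,dt\cdot\tfrac1{12}=\tfrac1{36}$. The same objection invalidates your claimed joint covariances $\int_0^1 h^{(1)}_{\pi_\ell}h^{(1)}_{\pi_{\ell'}}\,dx/((r-1)!)^2$.

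Two ways to repair it. Either keep the one-dimensional coupling and apply the CLT for asymmetric $U$-statistics with all slot projections and the position-dependent weights above; your slot-1 computation then still yields non-degeneracy, because the weight polynomials $t^{j-1}(1-t)^{r-j}$ are linearly independent, so the integrated variance can vanish only if every $h^{(1)}_{\pi,j}$ is a.s.\ constant. Or represent $\tau_n$ by $n$ i.i.d.\ uniform points in the unit square (sort by first coordinate, read the pattern off the second): the kernel is then genuinely symmetric as a function of the $r$ points, the classical Hoeffding decomposition applies, and the first projection is a non-degenerate function of \emph{both} coordinates of a point — not of a single uniform, which is where your shortcut loses the positional information. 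Your explicit computation of $h^{(1)}_\pi(x)$ and the conclusion $\sigma_1^2>0$ are correct as far as they go; it is the passage from them to the CLT and to the value of $\nu_\pi^2$ that fails.
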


Many more results, such as concentration inequalities and bounds on the speed of convergence, are available in the uniform case.
However, the focus of this paper is on the first- and second-order asymptotics of $\occ{\pi}{\tau_n}$.

\subsubsection{First-order convergence through permuton theory}

The first-order asymptotics of pattern counts are closely related to the theory of {\em permutons}, which are certain probability measures on $[0,1]^2$ that serve as scaling limits for large permutations \cite{HKMRMS13,Glebov_Grzesik_Klimosova_Kral_2015}.
Indeed, for each integer $n$, let $\tau_n\in\S_n$ be a random permutation.
Then $\tau_n$ converges in probability as $n\to\infty$ to a limit permuton $\mu$ if and only if for all $r\in\N^*$ and all $\pi\in\S_r$, the random variables $\binom{n}{r}^{-1} \occ{\pi}{\tau_n}$ converge in probability as $n\to\infty$ to the quantity 
\[
\dens{\pi}{\mu} := \int_{\left([0,1]^2\right)^r} \sum_{\sigma,\rho \in\S_r} \One{x_{\sigma(1)}< \dots< x_{\sigma(r)}} \One{y_{\rho(1)}< \dots< y_{\rho(r)}} \One{\rho^{-1}\circ\sigma =\pi} \, \mu(dx_1,dy_1) \dots \mu(dx_r,dy_r) ,
\]
see \cite{HKMRMS13} for details.
In general, it may happen that $\dens{\pi}{\mu} = 0$; the correct order of magnitude of $\occ{\pi}{\tau_n}$ should then be determined by different methods.

The permuton limits of Mallows random permutations are well understood.
Suppose that $n(1-q_n) \cv{n\to\infty} \beta$ for some $\beta \in [0,\infty]$.
Then:
\begin{itemize}
    \item If $\beta < \infty$ then $\tau_n$ converges in probability to an explicit deterministic permuton $\mu_\beta$ \cite{S09} with a positive density. 
    Thus
    \begin{equation}\label{eq: first order in thermodynamic regime}
        \binom{n}{r}^{-1} \occ{\pi}{\tau_n} \cv{n\to\infty} \dens{\pi}{\mu_\beta} >0
    \end{equation}
    in probability, for all patterns $\pi$.
    If $\beta=0$ then $\mu_\beta$ is the Lebesgue measure on $[0,1]^2$ and $\dens{\pi}{\mu_\beta} = \frac{1}{r!}$: 
    in this regime, the first-order asymptotics of $\occ{\pi}{\tau_n}$ are the same as in the uniform case.
    \item If $\beta=\infty$, the permuton limit of $\tau_n$ can be found with \cite[Theorem~1.1]{BP15}.
    Indeed, for any $\epsilon>0$:
    \[
    \text{for all } i\in[n],\quad
    \prob{\abs{\tau_n(i)-i} \ge \epsilon n} \le 2q_n^{\epsilon n}
    \]
    where $q_n^{\epsilon n} \to 0$ as $n\to\infty$, thus $\frac1n \card{ i\in[n] : \abs{\tau_n(i)-i} \ge \epsilon n }$ converges in probability to $0$.
    As a consequence, $\tau_n$ converges in probability to the diagonal permuton $\mu_\Delta$, which puts uniform mass on the diagonal of the unit square.
    Therefore:
    \begin{equation}\label{eq: nr first-order for Mallows patterns}
        \binom{n}{r}^{-1} \occ{\pi}{\tau_n} \cv{}
        \left\{
        \begin{array}{ll}
        0 & \text{ if } \pi \ne \id_r;\\
        1 & \text{ if } \pi = \id_r;
        \end{array}
        \right.
    \end{equation}
    in probability, as $n\to\infty$.
    When $\pi\ne\id_r$, this does not provide the correct order of magnitude for $\occ{\pi}{\tau_n}$.
\end{itemize}

\subsubsection{Asymtotic normality of inversions}

To our knowledge, the only pattern for which more is known is the inversion pattern $\pi=2\,1$.
First-order asymptotics for the number of inversions, in many regimes of $q=q_n$, are given in \cite{Pinsky_2022_inversions}.
As remarked in \cite{DR00}, a central limit theorem can also be proved; we state it in a restricted case for simplicity, and provide a proof in \hyperref[sec: appendix 1]{Appendix} for completeness.

\begin{theorem}\label{th: inv CLT if above thermo}
    For each $n$, let $\tau_n \sim \Mallows{n}{q_n}$.
    Suppose that $n(1-q_n)\to0$ as $n\to\infty$.
    Then
    \begin{equation*}
        \expec{\inv{\tau_n}} = \tfrac14n^2 - \tfrac{1}{36}(1-q_n)n^3 + \Landau{o}{}{n^{3/2} + (1-q_n)n^3}
        \quad\text{and}\quad
        \var{\inv{\tau_n}} = \tfrac{1}{36}n^3 + \Landau{o}{}{n^3} .
    \end{equation*}
    Furthermore, we have the following convergence in distribution:
    \begin{equation*}
        \frac{\inv{\tau_n} - \expec{\inv{\tau_n}}}{\var{\inv{\tau_n}}^{1/2}}
        \cv{n\to\infty} \Normal{0}{1}.
    \end{equation*}
\end{theorem}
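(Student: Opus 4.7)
The plan is to exploit the classical decomposition of $\inv{\tau_n}$ as a sum of independent truncated geometric variables (the equation labeled \texttt{eq: inversions as sum of independent} in the paper's body). Concretely, one has
\[
    \inv{\tau_n} \;\stackrel{d}{=}\; \sum_{k=1}^n Z_k,
    \qquad Z_k \sim \TGeom{q_n}{k} \text{ independent,}
\]
where $Z_k$ takes values in $\{0,1,\dots,k-1\}$ with $\prob{Z_k=j}=q_n^j(1-q_n)/(1-q_n^k)$. This turns the theorem into a statement about a sum of bounded independent variables, for which mean, variance, and Lyapunov-type CLTs are standard tools.

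First I would compute $\expec{Z_k}$ and $\var{Z_k}$ in closed form and Taylor-expand in $\varepsilon_n:=1-q_n$. Using $1-q_n^k = k\varepsilon_n(1 - \tfrac{k-1}{2}\varepsilon_n + O(k^2\varepsilon_n^2))$ and $q_n^k = 1 - k\varepsilon_n + \binom{k}{2}\varepsilon_n^2 + O(k^3\varepsilon_n^3)$, a direct expansion yields
\[
    \expec{Z_k} = \tfrac{k-1}{2} - \tfrac{k^2-1}{12}\varepsilon_n + O(k^3\varepsilon_n^2),
    \qquad
    \var{Z_k} = \tfrac{k^2-1}{12} + O(k^3\varepsilon_n).
\]
Summing over $k\in[n]$ and using $n\varepsilon_n \to 0$, the remainders contribute $O(n^4\varepsilon_n^2)=o(n^2)$ in the mean and $O(n^4\varepsilon_n)=o(n^3)$ in the variance, yielding the claimed asymptotics $\expec{\inv{\tau_n}} = \tfrac14 n^2 - \tfrac{1}{36}(1-q_n)n^3 + o(n^{3/2}+(1-q_n)n^3)$ and $\var{\inv{\tau_n}} = \tfrac{1}{36}n^3 + o(n^3)$.

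For the asymptotic normality I would apply Lyapunov's CLT with third absolute moments. Since $|Z_k-\expec{Z_k}|\le k\le n$, one has $\expec{|Z_k - \expec{Z_k}|^3} \le n\cdot\var{Z_k} = O(n\cdot k^2)$, so
\[
    \sum_{k=1}^n \expec{|Z_k - \expec{Z_k}|^3} = O(n^4),
\]
while $\var{\inv{\tau_n}}^{3/2} \sim (n^3/36)^{3/2} = \Theta(n^{9/2})$. The Lyapunov ratio is therefore $O(n^{-1/2})\to 0$, which gives the CLT.

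The only step requiring genuine care is the Taylor expansion in Step~2: the coefficient of $(1-q_n)$ in $\expec{Z_k}$ must be tracked to the right order so that the deterministic second-order term $-\tfrac{1}{36}(1-q_n)n^3$ emerges cleanly, with an error that is genuinely $o(n^{3/2}+(1-q_n)n^3)$ under the assumption $n(1-q_n)\to 0$. Nothing else in the argument is delicate; the independence decomposition reduces everything to bookkeeping plus a textbook CLT.
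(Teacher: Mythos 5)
Your proposal is correct and is essentially the paper's own argument: both write $\inv{\tau_n}$ as a sum of independent truncated geometric variables, Taylor-expand the per-term means and variances in $1-q_n$ (the paper performs the same expansion starting from the closed-form sums of Diaconis--Ram, which are just these per-term quantities summed), and conclude with Lyapunov's CLT, so the two proofs differ only in bookkeeping. One small correction to your error analysis for the mean: the remainder $O(n^4(1-q_n)^2)$ should be absorbed by noting it equals $(n(1-q_n))\cdot(1-q_n)n^3=o\left((1-q_n)n^3\right)$, not by the comparison $o(n^2)$, since an $o(n^2)$ term need not be $o\left(n^{3/2}+(1-q_n)n^3\right)$ when $1-q_n\ll n^{-3/2}$; with that one-line fix (and the trivial absorption of the $O(n)$ and $O(n^2(1-q_n))$ terms into $o(n^{3/2})$) the argument is complete.
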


Note that when $q_n=1$, we retrieve the famous central limit theorem for the number of inversions in a uniformly random permutation:
\begin{equation}\label{eq: inv CLT if uniform}
    \frac{\inv{\tau_n} - \frac14 n^2}{\frac16 n^{3/2}} \cv{n\to\infty} \Normal{0}{1}.    
\end{equation}

\section{Overview of our results}

The aim of this paper is to broaden the asymptotic study of $\occ{\pi}{\tau_n}$ for $\pi\ne2\,1$ and $\tau_n \sim \Mallows{n}{q}$, in three different regimes of $q=q_n$.
Our results are summarized in the following sections:
we present in \Cref{sec: results almost uniform} a central limit theorem when $n^{3/2}(1-q)\to0$, in \Cref{sec: results transition} first-order asymptotics when $q\to1$ and $n(1-q)\to0$, and in \Cref{sec: results regenerative} joint central limit theorems when $q$ does not depend on $n$.

\subsection{Notation}\label{sec: notation}

Let us introduce some notation beforehand.
Throughout this paper, the parameter $q\in(0,1)$ may depend on $n$ (this is the case in \Cref{sec: results almost uniform,sec: results transition}), but we often write $q$ instead of $q_n$ for simplicity.

We let $\N := \{0,1,2,\dots\}$ be the set of nonnegative integers, $\N^* := \{1,2,3,\dots\}$ be the set of positive integers, $[n] := \{1,2,\dots,n\}$, and $\S_n$ be the symmetric group on $[n]$.
If $\sigma\in\S_n$, the size of $\sigma$ is the integer $n$ and denoted by $\abs{\sigma}$. 
We let $\S := \bigcup_{n\in\N^*} \S_n$ be the set of all finite permutations, and $\S_\infty$ be the symmetric group on $\N^*$.

All permutations $\sigma\in\S_\ell$ in this paper are written in one-line notation $\sigma = \sigma(1)\, \sigma(2)\dots \sigma(\ell)$. 
The {\em direct sum} of $\sigma\in\S_\ell$ and $\rho\in\S_m$ is the permutation $\sigma\oplus\rho \in \S_{\ell+m}$ such that
\[
\sigma\oplus\rho
= \sigma(1)\; \sigma(2)\, \dots\, \sigma(\ell)\;
\rho(1)\p\ell\; \rho(2)\p\ell\, \dots\, \rho(m)\p\ell .
\]
Note that the $\oplus$ operator on $\S$ is associative.
When a permutation $\sigma\in\S$ can not be written as the direct sum of two non-empty permutations, we say that it is {\em indecomposable}, or that it is a {\em block}.
For example, the only indecomposable permutations in $\S_3$ are $2\,3\,1$, $3\,1\,2$, and $3\,2\,1$.
Conversely, $\sigma$ is decomposable if there exists $k\in[n-1]$ such that $\{ \sigma(1), \dots, \sigma(k) \} = [k]$.
It is a classical fact that any permutation $\sigma\in\S$ can be uniquely decomposed as a sum of blocks $\sigma = \sigma_1\oplus \dots\oplus \sigma_d$,
and we call the integer $d\in\N^*$ the {\em number of blocks} of $\sigma$.

If $(E_n)_{n\in\N}$ is a sequence of events, we say that $E_n$ happens with high probability (w.h.p.) when $\prob{E_n}\to1$ as $n\to\infty$.
If $(X_n)_{n\in\N}$ and $(Y_n)_{n\in\N}$ are sequences of positive random variables, we write $X_n = \Landau{\O_\P}{}{Y_n}$ when:
\[
\sup_{n\in\N}\prob{ X_n \ge C\, Y_n } \cv{C\to+\infty} 0 ,
\]
or equivalently when $\left( X_n / Y_n \right)_{n\in\N}$ is tight.
Also, $X_n = \Landau{o_\P}{}{Y_n}$ means that $X_n / Y_n \to0$ in probability as $n\to\infty$.

In this paper, we use the notation $x_n = \landauT{\Theta}{}{y_n}$ when there exist constants $\alpha, \beta, \gamma > 0$ and $n_0\in\N$ such that:
\begin{equation*}
    \text{for all }n\ge n_0,\quad
    \alpha\, y_n \le x_n \le \beta\, (\log n)^\gamma\, y_n .
\end{equation*}
Note that this slightly differs from the usual ${\widetilde\Theta}$ notation: here, there is no logarithmic factor in the lower bound.
Next, if $X_n$, $Y_n$ are real r.v.'s, we write $X_n = \landauT{\Theta}{\P}{Y_n}$ when there exist constants $\beta, \gamma > 0$ such that for any $\epsilon>0$, there exist $\alpha_\epsilon>0$ and $n_\epsilon\in\N$ such that:
\begin{equation*}
    \text{for all }n\ge n_\epsilon,\quad
    \prob{ \alpha_\epsilon\, Y_n \le X_n \le \beta\, (\log n)^{\gamma}\, Y_n } \ge 1-\epsilon .
\end{equation*}
Informally, this means that $X_n$ has the same asymptotic order as $Y_n$ in probability, but the upper bound suffers a logarithmic factor.

\subsection{Stability close to the uniform regime}\label{sec: results almost uniform}

One might expect the $\Mallows{n}{q}$ distribution to have similar properties to $\Unif{\S_n}$ when $q$ is close enough to~$1$.
For instance, by \Cref{th: inv CLT if above thermo}, the convergence \eqref{eq: inv CLT if uniform} holds for $\tau_n\sim \Mallows{n}{q}$ if $n^{3/2}(1-q)\to0$ (otherwise $\expec{\inv{\tau_n}}$ has an additional term above $n^{3/2}$).
The same stability phenomenon occurs for all patterns, as can be deduced from a coupling lemma:

\begin{lemma}\label{lem: coupling Mallows with uniform}
    Let $n\in\N^*$ and $q\in(0,1)$ such that $n(1-q)\le1$.
    We can couple $\tau_n \sim \Mallows{n}{q}$ with $u_n \sim \Unif{\S_n}$ in such a way that there exists $\cH_n \subset [n]$ satisfying $\pat{[n]\setminus\cH_n}{\tau_n} = \pat{[n]\setminus\cH_n}{u_n}$ and $\expec{\cH_n} \le \frac32 n^2 (1-q)$.
\end{lemma}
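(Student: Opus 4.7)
The plan is to couple $\tau_n$ and $u_n$ using the classical Lehmer-type insertion representation. A permutation $\sigma \in \S_n$ is uniquely determined by a sequence $(R_1, \dots, R_n)$ with $R_v \in \{0, \dots, v{-}1\}$: one builds $\sigma$ by inserting, for $v = 1, \dots, n$, the value $v$ at position $v - R_v$ into the current sequence of length $v-1$. Since $\inv{\sigma} = \sum_v R_v$, one checks that $\sigma \sim \Mallows{n}{q}$ iff the $R_v$'s are independent with $R_v \sim \TGeom{q}{v}$ (on $\{0,\dots,v{-}1\}$, with $\prob{R_v = k} = q^k(1-q)/(1-q^v)$), and that $\sigma \sim \Unif{\S_n}$ iff the $R_v$'s are independent and uniform on $\{0, \dots, v-1\}$. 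I would sample pairs $(R_v^M, R_v^U)$ for each $v$ via the maximal coupling of $\TGeom{q}{v}$ with $\Unif{\{0,\dots,v-1\}}$, independently across $v$, and let $\tau_n$ be built from $(R_v^M)_v$ and $u_n$ from $(R_v^U)_v$.

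Let $\mathcal{V}_n := \{v \in [n] : R_v^M \ne R_v^U\}$ be the set of \emph{bad values}, and set $\cH_n := \tau_n^{-1}(\mathcal{V}_n) \cup u_n^{-1}(\mathcal{V}_n)$, i.e.\ the positions at which either permutation exhibits a bad value. The key claim is that $\pat{[n] \setminus \cH_n}{\tau_n} = \pat{[n] \setminus \cH_n}{u_n}$. I would prove this by induction on the insertion step $v$, maintaining the invariant that the pattern induced on the jointly-good positions of the intermediate sequences $\sigma^M_v$ and $\sigma^U_v$ agrees. A good insertion ($R^M_v = R^U_v = R$) places $v$ at the same position $v{-}R$ in both sequences; since $v$ is the largest good value inserted so far, it joins the jointly-good pattern at a well-defined relative rank and preserves the order of previous good values. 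A bad insertion places $v$ at distinct positions $p^M \ne p^U$, both of which are excluded from the jointly-good set (since $v \in \mathcal{V}_n$); the delicate point is to argue that the induced ``local cyclic rotation'' of the values lying between $p^M$ and $p^U$ does not flip the order of any pair of good values elsewhere.

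For the quantitative bound, $|\cH_n| \le 2|\mathcal{V}_n|$, hence
\[
\expec{|\cH_n|} \le 2 \sum_{v=1}^n \dtv{\TGeom{q}{v}}{\Unif{\{0,\dots,v-1\}}}.
\]
A direct Taylor expansion of $q^k = (1-(1-q))^k$ near $q=1$ shows that $\dtv{\TGeom{q}{v}}{\Unif{\{0,\dots,v-1\}}} \le \tfrac{3}{4}\, v (1-q)$ whenever $v(1-q) \le 1$; summing over $v \in [n]$ gives $\expec{|\cH_n|} \le \tfrac{3}{2} n^2 (1-q)$.

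The main obstacle is the inductive pattern-preservation step. The cyclic rotations produced by bad insertions interact with the shifts produced by later (good) insertions in complex ways, so one must carefully track, through each insertion step, the relative positions of all good values and verify that no pair of them, both lying in the jointly-good set, ever has its order flipped. I expect this to be the heart of the argument, while steps 1 and 3 above should reduce to standard bijective and elementary estimates.
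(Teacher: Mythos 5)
The heart of your argument --- the claim that removing $\cH_n := \tau_n^{-1}(\mathcal V_n)\cup u_n^{-1}(\mathcal V_n)$ restores pattern equality --- is not just the delicate step you flag: it is false for the coupling you propose. The underlying deterministic statement would be: if two permutations are built from insertion sequences that agree at every value except one, then deleting the (one or two) positions occupied by that value in either permutation yields equal patterns. Here is a counterexample with a single bad value and no later insertions at all. Take $n=4$ and $R_1=R_2=0$, $R_3=1$ in both sequences, so that after step $3$ both words equal $1\,3\,2$; then let the coupling of $\TGeom{4}{1-q}$ with $\Unif{\{0,\dots,3\}}$ produce the off-diagonal pair $(R_4^M,R_4^U)=(0,3)$ (any maximal coupling puts its off-diagonal mass on pairs with $R^M$ small and $R^U$ large, since the truncated geometric law exceeds the uniform one precisely on an initial segment; the standard construction gives this pair positive probability). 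Then $\tau_4=1\,3\,2\,4$ and $u_4=4\,1\,3\,2$, $\mathcal V_4=\{4\}$, $\cH_4=\{1,4\}$, and on the remaining positions $\{2,3\}$ one finds $\pat{\{2,3\}}{\tau_4}=2\,1$ while $\pat{\{2,3\}}{u_4}=1\,2$. The reason is structural, not technical: a single bad insertion at positions $p^M\ne p^U$ shifts, in one word relative to the other, \emph{every} value lying between the two insertion points, so the two permutations are misaligned on a whole interval of positions; deleting only the $\O(1)$ positions carrying the bad value cannot repair this, and later good insertions (made at the same absolute position in both words) only propagate the misalignment. So no amount of care in your inductive step can make the invariant true with this $\cH_n$; to salvage the approach along these lines you would have to enlarge $\cH_n$ by an amount comparable to the displacement $|R_4^M-R_4^U|$ at each error, which destroys the $\frac32 n^2(1-q)$ bound.

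This is exactly the pitfall the paper's proof is designed to avoid (and which it warns about when it says the naive coupling ``would not translate well into pattern occurrences''). Instead of coupling each truncated geometric with the corresponding uniform variable so as to maximize the probability of \emph{equality}, the paper couples them \emph{adaptively}: conditionally on the past, $L_{k+1}$ and $U_{k+1}$ are coupled so that $\phi_{\tau_n^k}(L_{k+1})=\phi_{u_n^k}(U_{k+1})$ with high probability, where the monotone maps $\phi_{\tau_n^k},\phi_{u_n^k}$ encode a ``local shift'' determined by the current error set and by the values the two permutations carry at the error positions. This is precisely what guarantees that, after a discrepancy, the newly inserted point occupies the same relative position with respect to the already matched (non-error) points in both permutations, so that removing only the error indices themselves suffices, and the number of errors grows like $\sum_k \big(k(1-q)+H_k/k\big)$, giving the stated $\frac32 n^2(1-q)$ bound. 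Your representation via insertion sequences and your total-variation estimate are fine as far as they go, but without this adaptive correction the pattern-preservation step cannot be made to work.
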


\begin{corollary}\label{th: stability close to uniform}
    Fix $r\ge2$ and $\pi\in\S_r$.
    For each $n$ let $\tau_n \sim \Mallows{n}{q}$, where $n^{3/2}(1-q) \to 0$ as $n\to\infty$.
    Then we have the following convergence in distribution:
    \begin{equation*}
        \frac{\occ{\pi}{\tau_n} - \frac{1}{r!}\binom{n}{r}}{n^{r-1/2}}
        \cv{n\to\infty} \Normal{0}{\nu_\pi^2} ,
    \end{equation*}
    for the same $\nu_\pi>0$ as in \Cref{th: CLT occ unif}.
    This also holds jointly for any finite family of patterns, with the same covariances as in \Cref{th: CLT occ unif}.
\end{corollary}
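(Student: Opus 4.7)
The plan is to apply the coupling from \Cref{lem: coupling Mallows with uniform} and show that $\occ{\pi}{\tau_n}$ and $\occ{\pi}{u_n}$ differ by only $o_\P(n^{r-1/2})$, after which the conclusion will follow immediately from \Cref{th: CLT occ unif} applied to $u_n$ (Slutsky). So the content reduces to a single $L^1$ estimate on the coupling error.

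First, I would invoke \Cref{lem: coupling Mallows with uniform} (the hypothesis $n(1-q)\le 1$ holds eventually, since $n^{3/2}(1-q)\to 0$) to jointly realise $\tau_n$ and $u_n$ together with the random set $\cH_n\subseteq[n]$ on which their patterns agree outside of $\cH_n$. The key observation is that any $r$-subset $I\subseteq [n]$ that is disjoint from $\cH_n$ satisfies $\pat{I}{\tau_n} = \pat{I}{u_n}$, so such $I$ contribute equally to $\occ{\pi}{\tau_n}$ and $\occ{\pi}{u_n}$. Consequently
\[
\abs{\occ{\pi}{\tau_n} - \occ{\pi}{u_n}}
\le \card{ I\subseteq[n] \,:\, \abs{I}=r,\ I\cap\cH_n\ne\emptyset }
\le \abs{\cH_n}\binom{n-1}{r-1} \le \abs{\cH_n}\, n^{r-1}.
\]

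Taking expectations and using $\expec{\abs{\cH_n}} \le \tfrac32 n^2(1-q)$ gives
\[
\expec{ \abs{\occ{\pi}{\tau_n} - \occ{\pi}{u_n}} }
\le \tfrac32 n^{r+1}(1-q),
\]
so by Markov's inequality $n^{-(r-1/2)}\abs{\occ{\pi}{\tau_n}-\occ{\pi}{u_n}} = \Landau{\O_\P}{}{n^{3/2}(1-q)} = \Landau{o_\P}{}{1}$ under the assumption $n^{3/2}(1-q)\to 0$. Combined with \Cref{th: CLT occ unif}, which yields the Gaussian limit for $n^{-(r-1/2)}\bigl(\occ{\pi}{u_n} - \tfrac{1}{r!}\binom{n}{r}\bigr)$, Slutsky's lemma gives the desired convergence for $\tau_n$.

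For the joint statement over a finite family of patterns $\pi_1,\dots,\pi_k$, the same coupling is used for all of them simultaneously: the bound above holds for each $\pi_j$ with the \emph{same} exceptional set $\cH_n$, so the vector of differences is $\Landau{o_\P}{}{n^{r-1/2}}$ coordinatewise. Joint convergence of the $\occ{\pi_j}{u_n}$ (with the covariance structure from \Cref{th: CLT occ unif}) then transfers to the $\occ{\pi_j}{\tau_n}$ by the multivariate Slutsky lemma. The only mild subtlety is checking the combinatorial inequality $\abs{\occ{\pi}{\tau_n}-\occ{\pi}{u_n}}\le \abs{\cH_n}\binom{n-1}{r-1}$, which is immediate from the coupling; no real obstacle arises, as the lemma does all the work.
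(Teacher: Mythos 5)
Your proposal is correct and follows essentially the same route as the paper: apply \Cref{lem: coupling Mallows with uniform}, bound the discrepancy $\abs{\occ{\pi}{\tau_n}-\occ{\pi}{u_n}}$ by $n^{r-1}\abs{\cH_n}$ via $r$-subsets meeting $\cH_n$, use $\expec{\abs{\cH_n}}\le\tfrac32 n^2(1-q)$ with $n^{3/2}(1-q)\to0$ to get an error of order $\Landau{o_\P}{}{n^{r-1/2}}$, and conclude by \Cref{th: CLT occ unif} and Slutsky, with the joint case handled by the same coupling coordinatewise. No gaps.
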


This is proved in \Cref{sec: proofs almost uniform}.
There are several ways to prove \Cref{lem: coupling Mallows with uniform};
for example, we could use a point process approach and \cite[Lemma~4.2]{MS13}.
Here, however, we rather opt for a coupling via inversion sequences, as it is more in line with the rest of the paper and relies on different ideas.
We believe that our approach is interesting on its own, and that appropriate variants of it could be used to study other statistics of Mallows permutations.

It starts from the construction of Mallows permutations using independent truncated geometric variables with parameters $(k,q)$ (see \Cref{sec: preliminaries}).

If $n^2(1-q)\to0$, then it is possible to couple the $\TGeom{k}{q}$ variables with $\Unif{[k]}$ ones so that they are all equal with high probability (see the computation in \cite[Lemmas~7.2 and~7.3]{MSV23}, and \cite[Remark~2]{BB17}).
In this regime, the permutation $\tau_n \sim \Mallows{n}{q}$ can be replaced by a uniformly random permutation $u_n \sim \Unif{\S_n}$ with high probability, and the convergence of \Cref{th: stability close to uniform} is straightforward.

In our regime, that is when $n^{3/2}(1-q)\to0$, such a strong coupling is no longer available.
A naive idea would be to couple $\TGeom{k}{q}$ with $\Unif{[k]}$ so that as many variables as possible are equal, but this would not translate well into pattern occurrences.
The coupling of \Cref{lem: coupling Mallows with uniform} is more subtle: 
when some $\TGeom{k}{q}$ variable does not equal the corresponding $\Unif{[k]}$ variable, the subsequent $\TGeom{k'}{q}$ variables should rather try to be equal to an appropriate \enquote{local shift} of the corresponding $\Unif{[k']}$ variables.
This shift is chosen so that $\tau_n$ and $u_n$ have a large pattern in common, with high probability.
See \Cref{sec: proofs almost uniform} for more details.

\subsection{First-order asymptotics in the transition regime}\label{sec: results transition}

Now suppose that $q\to1$ and $n(1-q)\to\infty$ as $n\to\infty$.
In this regime, \eqref{eq: nr first-order for Mallows patterns} only tells us that $\occ{\pi}{\tau_n} = \Landau{o_\P}{}{n^r}$  for $\pi\ne\id_r$.
The following theorem describes the correct first-order asymptotics of $\occ{\pi}{\tau_n}$, up to logarithmic factors.

\begin{theorem}\label{th: order of occ for transition regime}
    Let $q = q_n \in (0,1)$ such that $q \to 1$ and $n(1-q) \to \infty$ as $n\to\infty$.
    Fix $\pi\in\S_r$, written as a sum of blocks $\pi = \pi_1 \oplus \dots \oplus \pi_d$.
    Then, if $\tau_n \sim \Mallows{n}{q}$:
    \begin{equation*}
        \occ{\pi}{\tau_n} = \landauT{\Theta}{\P}{ \frac{n^d}{(1-q)^{r-d}} } .
    \end{equation*}
    Moreover, for all $p>0$:
    \begin{equation*}
        \expec{ \occ{\pi}{\tau_n}^p } = \landauT{\Theta}{}{ \left( \frac{n^d}{(1-q)^{r-d}} \right)^p } .
    \end{equation*}
\end{theorem}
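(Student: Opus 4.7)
My plan is to prove matching upper and lower bounds of order $n^d/(1-q)^{r-d}$, with a polylogarithmic slack only in the upper direction. The key ingredient in both directions is the displacement estimate $\P(|\tau_n(i)-i|\ge x)\le 2q^x$ recalled in the excerpt after \eqref{eq: nr first-order for Mallows patterns}.

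\textbf{Upper bound.} Set $L := C\log n/(1-q)$ for a large constant $C$; a union bound yields that the event $E_n := \{\max_i |\tau_n(i)-i|\le L\}$ has probability at least $1-2n^{1-C}$. On $E_n$, suppose $(i_1<\cdots<i_r)$ is an occurrence of $\pi$ and consider its $l$-th group $i_{a_l+1}<\cdots<i_{a_l+r_l}$ with $a_l := r_1+\cdots+r_{l-1}$. Since $\pi_l$ is indecomposable, for every $j\in[r_l-1]$ there exist $k\le j<k'$ with $\pi_l(k)>\pi_l(k')$; the corresponding inversion in $\tau_n$ combined with the displacement bound gives $i_{a_l+k'}-i_{a_l+k}<2L$. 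Chaining these inversions by iteratively advancing the largest reached position across $j=1,\ldots,r_l-1$ shows $i_{a_l+r_l}-i_{a_l+1}\le 2(r_l-1)L$, so the $l$-th group fits in an interval of length $O(L)$. Hence on $E_n$,
\[
\occ{\pi}{\tau_n}\;\le\;\binom{n}{d}\prod_{l=1}^{d}\binom{2rL}{r_l-1}\;=\;O\!\left(\frac{n^d(\log n)^{r-d}}{(1-q)^{r-d}}\right),
\]
which gives the in-probability upper bound with $\gamma=r-d$. Combining with the deterministic bound $\occ{\pi}{\tau_n}\le\binom{n}{r}$ on $E_n^c$ and taking $C$ large depending on $p$ yields the $L^p$ upper bound for every $p>0$.

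\textbf{Lower bound.} Partition $[n]$ into consecutive windows of length $\lfloor(1-q)^{-1}\rfloor$, and call a tuple $(i_1,\ldots,i_r)$ \emph{good} if its $l$-th group of indices lies in a single window $W_l$ and any two consecutive windows $W_l, W_{l+1}$ are separated by at least $ML/(1-q)$ for a large constant $M$. The number of good tuples is $\Omega(n^d/(1-q)^{r-d})$, and for any good tuple, the probability that it is a $\pi$-occurrence is bounded below by a positive constant uniformly in $n$: the window separation together with the displacement bound forces the block-sum structure between groups, while inside each $W_l$ the restriction of $\tau_n$ behaves like a Mallows permutation of size $\sim(1-q)^{-1}$ with parameter $q$, whose permuton limit $\mu_1$ from \cite{S09} has strictly positive density of every fixed pattern. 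Hence $\E[\occ{\pi}{\tau_n}]=\Omega(n^d/(1-q)^{r-d})$. Writing $N_{\text{good}}$ for the number of good tuples that are $\pi$-occurrences, a second-moment estimate (pairs sharing a window lose a factor $n/(1-q)$, and pairs in disjoint far-apart windows decorrelate via the displacement bound) yields $\mathrm{Var}[N_{\text{good}}]=o(\E[N_{\text{good}}]^2)$, so Chebyshev's inequality delivers the in-probability lower bound. The matching $L^p$ lower bound $\E[\occ{\pi}{\tau_n}^p]\ge\alpha_\epsilon^p(n^d/(1-q)^{r-d})^p(1-\epsilon)$ then follows immediately for any fixed $\epsilon>0$.

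\textbf{Main obstacle.} The most delicate step is the decoupling used in the variance calculation: one must argue that restrictions of $\tau_n$ to far-apart windows behave as essentially independent Mallows samples. This can be achieved either by invoking the regenerative construction of Mallows developed later in the paper, or by a direct argument leveraging the displacement bound to pin the values in each window close to the window itself. A lesser but genuine concern is to verify that Starr's permuton convergence applies uniformly across all $\Theta(n(1-q))$ windows, so that one obtains a uniform positive lower bound on the density of each $\pi_l$ inside a window.
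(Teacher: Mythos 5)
Your upper bound is sound and is essentially the paper's argument: the displacement estimate confines each indecomposable block of an occurrence to a window of length $O(\log n/(1-q))$ (your chaining of crossing inversions is a valid substitute for the paper's interval-index claim), and the moment bound follows by enlarging the constant in the exponent and using the trivial bound off the good event.

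The lower bound, however, has a genuine gap. You run a second-moment method over ``good tuples'', and the variance estimate rests on the claim that ``pairs in disjoint far-apart windows decorrelate via the displacement bound''. The displacement bound controls $|\tau_n(i)-i|$ but gives no covariance bound between pattern events in two distant windows; some real mixing statement for the Mallows measure would be needed, and neither of your two suggested fixes works as stated. In particular the regenerative decomposition is useless in this regime: as $q\to 1$ renewal points become exponentially rare (typical block sizes are far larger than $1/(1-q)$, and indeed the paper notes $\tau_n$ is in general indecomposable when $q\to1$), so far-apart windows of size $(1-q)^{-1}$ need not be separated by any renewal. A second, related issue is your per-tuple bound: permuton convergence of the window restriction (which is only \emph{marginally} Mallows, by the BP15 restriction lemma) gives a lower bound on the \emph{count} $\occ{\pi_l}{\cdot}$ inside the window, i.e.\ an averaged statement; it does not give a uniform positive lower bound on the probability that a \emph{specified} $r_l$-tuple of positions induces $\pi_l$, which is what your first- and second-moment computations use. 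There is also a quantitative slip: with $L=C\log n/(1-q)$, requiring consecutive windows to be separated by $ML/(1-q)\asymp \log n/(1-q)^2$ can be infeasible (e.g.\ when $n(1-q)^2\lesssim\log n$), so good tuples may not exist; the separation should be $\Theta(1/(1-q))$ with a large constant.

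For comparison, the paper's lower bound avoids both independence and per-tuple probabilities altogether: it takes disjoint index windows of length $\lambda/(1-q)$ separated by $2\lambda/(1-q)$, crops each induced (marginally Mallows, thermodynamic-regime) permutation to its middle third in values so that occurrences in distinct windows automatically stack vertically, gets an in-probability lower bound on each window's block-pattern count from \eqref{eq: first order in thermodynamic regime}, and then controls the number of ``bad'' windows by Markov's inequality; the deterministic combination over the $\ge \tfrac12 m$ good windows yields the bound with probability $\ge 1-24e^{-\lambda}-o(1)$. If you want to keep your tuple-based scheme, you would have to either prove a genuine decorrelation/mixing estimate for Mallows at scale $\gg(1-q)^{-1}$ plus a per-tuple pattern-probability lower bound, or restructure the argument around whole-window counts and a union/Markov bound as the paper does.
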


In particular if $q = 1 - c n^{-x}$ for some $c>0$ and $x\in(0,1)$, then $\occ{\pi}{\tau_n}$ has order $n^{xr + (1-x)d}$.
This shows how the first-order asymptotics of $\occ{\pi}{\tau_n}$ interpolate between $n^d$ for fixed $q$ (see \Cref{sec: results regenerative}) and $n^r$ for bounded $n(1-q)$ (recall \Cref{eq: first order in thermodynamic regime}).
For $\pi = 2\,1$ (then $r=2$ and $d=1$), this also coincides with \cite[Theorem~1.3]{Pinsky_2022_inversions}.

\Cref{th: order of occ for transition regime} is proved in \Cref{sec: proofs transition}.
The idea behind the proof is that, while $\tau_n \sim\Mallows{n}{q}$ is in general indecomposable if $q\to1$, it can be approximated by a decomposable permutation after removing a fraction of points if $n(1-q)\to\infty$.
This allows us to approximate $\tau_n$ by a sum of subpermutations $\tau_{n,1} \oplus\dots\oplus \tau_{n,m}$, and it remains to estimate the number of pattern occurrences in these subpermutations.
The upper and lower bounds are then proved separately: 
the former hinges on a trivial upper bound for the number of pattern occurrences, while the latter makes use of \eqref{eq: first order in thermodynamic regime} by finding smaller subpermutations in $\tau_n$ which are $\Mallows{n_k}{q}$ distributed with bounded $n_k(1-q)$.

\subsection{Joint and process convergence in the regenerative regime}\label{sec: results regenerative}

In this section, $q\in(0,1)$ does not depend on $n$.
This regime is called \enquote{regenerative} for Mallows random permutations, because $\tau_n \sim \Mallows{n}{q}$ can be decomposed into a truncated sum of i.i.d.~blocks.
This property comes from the embedding of finite Mallows permutations into the {\em infinite Mallows permutation} $\Sigma \sim \Mallows{\N^*}{q}$.
This permutation, introduced in \cite[Definition~4.4]{GO10} and recalled in \Cref{sec: infinite Mallows q fixed}, is a random element of $\S_\infty$ with two notable properties.
The first one is that $\pat{[n]}{\Sigma}$, the pattern induced by $\Sigma$ on $[n]$, follows the $\Mallows{n}{q}$ distribution (\Cref{lem: pattern of infinite Mallows}).
The second one is that $\Sigma$ is regenerative in the sense that
\[
\Sigma = B_1\oplus \dots\oplus B_k\oplus \dots
\]
where $\left(B_k\right)_{k\in\N^*}$ is an infinite sequence of i.i.d.~random finite blocks (\Cref{lem: infinite Mallows is regenerative}).
We say that $B_k \in \S$ is a {\em Mallows block with parameter $q$}, and we denote by $\MallowsBlock{q}$ its distribution.

The $\MallowsBlock{q}$ distribution is essential when describing the asymptotics of $\occ{\pi}{\tau_n}$.
If $\pi\in\S$ is written as a sum of blocks $\pi=\pi_1\oplus \dots\oplus\pi_d$, define
\begin{equation}
    e_{\pi,q} := \prod_{j=1}^d \frac{\expec{\occ{\pi_j}{B}}}{\expec{\abs{B}}}
\end{equation}
where $B \sim \MallowsBlock{q}$.
Another equivalent expression of $e_{\pi,q}$ will be given later in \Cref{sec: proof time-joint CLT}, \Cref{eq: double expression for e}.
Our results show the expansion 
\[
\occ{\pi}{\tau_n} = \binom{n}{d}e_{\pi,q} + n^{d-1/2} \Normal{0}{\gamma_\pi^2} + \Landau{o_\P}{}{n^{d-1/2}}
\]
for some $\gamma_\pi^2\ge0$, where $d\in\N^*$ is the number of blocks of $\pi\in\S$.
When $\pi$ is not increasing, the Gaussian fluctuations are non-degenerate in the sense that $\gamma_\pi^2>0$ (\Cref{prop: non-degeneracy of CLT and rank of matrix}).
When $\pi$ is increasing, non-degenerate Gaussian fluctuations can be found at a lesser order (\Cref{th: CLT for identity pattern with fixed q}).
We prove two multivariate versions of this central limit theorem:
the first one, \Cref{th: LLN and CLT for joint patterns with fixed q}, holds jointly in all patterns having $d$ blocks;
the second one, \Cref{th: Donsker for time-joint single pattern}, holds jointly in $q\in(0,1)$ for a certain continuous-time coupling of the $\Mallows{n}{q}$ distributions.

In what follows, if $d\ge1$ is fixed, we write \enquote{$\pi= \pi_1\oplus\dots\oplus\pi_d$} to mean that $\pi$ runs over the set of patterns in $\S$ having exactly $d$ blocks, without size constraints.

\begin{theorem}\label{th: LLN and CLT for joint patterns with fixed q}
    Fix $d\ge1$ and $q\in(0,1)$.
    Let $\Sigma\sim \Mallows{\N^*}{q}$, and define $\tau_n := \pat{[n]}{\Sigma} \sim\Mallows{n}{q}$ for each $n\ge1$.
    For any $\pi = \pi_1\oplus\dots\oplus\pi_d$, we have the following almost sure convergence:
    \begin{equation}\label{eq: LLN q fixed}
        \frac{1}{n^d} \occ{\pi}{\tau_n} 
        \,\overset{}{\longrightarrow}\,
        \frac{1}{d!} e_{\pi,q} \,.
    \end{equation}
    Moreover, jointly in all patterns having $d$ blocks, we have the following finite-dimensional convergence in distribution as $n\to\infty$:
    \begin{equation}\label{eq: CLT joint q fixed}
        \left(
        \frac{ \occ{\pi}{\tau_n} - \binom{n}{d} e_{\pi,q} }{n^{d-1/2}} 
        \right)_{\pi=\pi_1\oplus\dots\oplus\pi_d}
        \,\cv{}\, \Normal{0}{\Gamma^{(d)}}
    \end{equation}
    for some infinite covariance matrix $\Gamma^{(d)}$ described by \eqref{eq: formula covariance matrix q fixed}.
    This also holds with convergence of all joint moments.
\end{theorem}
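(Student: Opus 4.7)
The plan is to exploit the regenerative decomposition $\Sigma = B_1 \oplus B_2 \oplus \dots$ of the infinite Mallows permutation. Set $L_k := \abs{B_k}$, $S_k := L_1 + \dots + L_k$ and $K_n := \max\{k : S_k \le n\}$, so that $\tau_n = B_1 \oplus \dots \oplus B_{K_n} \oplus \widetilde B_n$, where $\widetilde B_n$ is the pattern induced by the first $n - S_{K_n}$ points of $B_{K_n + 1}$. For $q \in (0,1)$ the block size $\abs{B}$ has exponential tails, so $\expec{\abs{B}^p} < \infty$ for every $p \ge 1$, and the renewal theorem yields $K_n / n \to 1/\expec{\abs{B}}$ almost surely with Gaussian $\sqrt{n}$-fluctuations. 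I next decompose $\occ{\pi}{\tau_n}$ according to the block of $\tau_n$ in which each $\pi_i$ sits: since $\pi_i$ is indecomposable, its points must lie entirely within a single block of $\tau_n$. The main contribution comes from configurations where the $d$ blocks of $\pi$ fall in $d$ distinct complete blocks:
\begin{equation*}
    U_n := \sum_{1 \le k_1 < \dots < k_d \le K_n} \prod_{j=1}^d \occ{\pi_j}{B_{k_j}}.
\end{equation*}
The remainder $R_n := \occ{\pi}{\tau_n} - U_n$ accounts for configurations involving $\widetilde B_n$ or two blocks $\pi_i, \pi_{i+1}$ sharing a common $B_k$; a direct count gives $R_n = \Landau{\O_\P}{}{n^{d-1}}$, with matching $L^p$ bounds coming from the uniform moment bounds on $\occ{\rho}{B}$.

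For the LLN \eqref{eq: LLN q fixed}, the SLLN for asymmetric $U$-statistics gives $U_K / K^d \to (d!)^{-1} \prod_j \expec{\occ{\pi_j}{B}}$ almost surely as $K\to\infty$. Substituting $K = K_n$, using $K_n/n \to 1/\expec{\abs{B}}$ a.s., and recognizing $e_{\pi,q} = \prod_j \expec{\occ{\pi_j}{B}} / \expec{\abs{B}}$, yields $\occ{\pi}{\tau_n}/n^d \to e_{\pi,q}/d!$ almost surely.

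For the CLT \eqref{eq: CLT joint q fixed}, I would establish a Hájek-projection expansion
\begin{equation*}
    \occ{\pi}{\tau_n} - \binom{n}{d}e_{\pi,q}
    = \sum_{k=1}^{\lfloor n/\expec{\abs{B}}\rfloor + 1} \Psi_{\pi,n}(B_k) + \Landau{o_\P}{}{n^{d-1/2}},
\end{equation*}
where $\Psi_{\pi,n}$ is an explicit scalar-valued kernel combining two superimposed $\O(n^{d-1/2})$ sources of fluctuations. The first is the Hoeffding first-order projection of $U_n$, obtained by expanding each factor $\occ{\pi_j}{B_{k_j}}$ as its mean plus a centered remainder and retaining only linear terms; the second is the renewal fluctuation of $\binom{K_n}{d}$ around $\binom{n}{d}/\expec{\abs{B}}^d$, captured through the classical CLT for renewal counts. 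Both contributions are linear functionals of the same i.i.d.\ sequence $(B_k)$, so the multivariate Lindeberg--Lévy theorem gives joint asymptotic normality for any finite family of patterns with $d$ blocks, and the Cram\'er--Wold device identifies the entries of $\Gamma^{(d)}$. Convergence of all joint moments then follows from uniform $L^p$ bounds on $n^{-(d-1/2)} \bigl(\occ{\pi}{\tau_n} - \binom{n}{d} e_{\pi,q}\bigr)$, which are standard given the exponential moments of $\abs{B}$ and classical $L^p$ bounds for $U$-statistics and renewal counts.

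The main technical obstacle is the explicit identification of $\Gamma^{(d)}$: one must combine the Hoeffding-projection variance of $U_n$ with the renewal-count variance of $K_n$ into a single covariance formula, and verify that all cross-correlations between distinct patterns $\pi,\pi'$ with $d$ blocks are captured correctly. A secondary subtlety is ensuring that the boundary block $\widetilde B_n$ neither biases the mean $\binom{n}{d} e_{\pi,q}$ at order $n^d$ nor contributes spurious $n^{d-1/2}$ fluctuations; this requires a careful analysis of the joint law of $\widetilde B_n$ and $n - S_{K_n}$, showing that its effect is fully absorbed in the $\Landau{o_\P}{}{n^{d-1/2}}$ remainder.
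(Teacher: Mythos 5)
Your proposal is correct and follows essentially the same route as the paper: the regenerative decomposition $\tau_n = B_1\oplus\dots\oplus B_{K_n}\oplus\sigma_n$, the approximation of $\occ{\pi}{\tau_n}$ by the injective count $U_n$ with an $\Landau{\O}{}{n^{d-1}}$ remainder in every $L^p$, a CLT for the resulting asymmetric $U$-statistic stopped at the renewal time $K_n$, Cram\'er--Wold for joint convergence, and uniform $L^p$ bounds for moment convergence. The only difference is that the paper outsources the stopped-$U$-statistic LLN/CLT, the variance formula yielding $\Gamma^{(d)}$, and the moment convergence to Janson's general results on asymmetric $U$-statistics (\cite{J18}, Theorems 3.9, 3.11, 3.17 and Equation (3.28)), whereas you sketch re-deriving them via the H\'ajek projection combined with the renewal fluctuation of $K_n$ --- which is precisely the mechanism behind those cited theorems.
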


In order to understand the (non-)degeneracy of the limit variables in \eqref{eq: CLT joint q fixed}, as well as the asymptotic correlation between pattern counts, we further study the matrix $\Gamma^{(d)}$.

\begin{proposition}\label{prop: non-degeneracy of CLT and rank of matrix}
    Fix $d\ge1$, $q\in(0,1)$, and let $\Gamma^{(d)}$ be the infinite matrix of \Cref{th: LLN and CLT for joint patterns with fixed q}.
    Then for any fixed $\pi=\pi_1 \oplus\dots\oplus \pi_d$ which is not an identity permutation, the central limit theorem for $\occ{\pi}{\tau_n}$ is non-degenerate in the sense that $\Gamma^{(d)}_{\pi,\pi} >0$.
    On the other hand, $\Gamma^{(d)}_{\id_d,\id_d} =0$.
    Furthermore for any $r\ge d$, if we denote by $\Gamma^{(d,r)}$ the restriction of $\Gamma^{(d)}$ to patterns in $\S_r$:
    \begin{itemize}
        \item if $d=r$, then $\Gamma^{(r,r)} = (0)$;
        \item if $d=r-1$, then $\Gamma^{(d,r)}$ is invertible;
        \item if $d=r-2$, then $\Gamma^{(d,r)}$ has rank $7$ if $d=2$, and rank $4d$ if $d\ge3$.
        Its size is $\binom{d}{2}+3d$, thus it is invertible if and only if $d\le3$;
        \item if $d=1$, then $\Gamma^{(1,r)}$ is invertible.
    \end{itemize}
\end{proposition}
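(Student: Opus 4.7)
My plan is to read off $\Gamma^{(d)}$ from the proof of \Cref{th: LLN and CLT for joint patterns with fixed q}, which via the Hoeffding-type decomposition of $\occ{\pi}{\tau_n}$ along the i.i.d.\ block sequence $\Sigma = B_1\oplus B_2\oplus\cdots$ should present $\Gamma^{(d)}_{\pi,\rho}$ as an inner product $\langle\phi_\pi,\phi_\rho\rangle$ of first-order projections. For $\pi = \pi_1\oplus\cdots\oplus\pi_d$, the projection should have the form
\[
\phi_\pi(t,B) \;\propto\; \sum_{j=1}^d P_j(t)\,\occ{\pi_j}{B}\prod_{j'\neq j}\expec{\occ{\pi_{j'}}{B'}} , \qquad P_j(t):=\tbinom{d-1}{j-1}t^{j-1}(1-t)^{d-j},
\]
where $t\in[0,1]$ is the scaling limit of the relative block index $k/M_n$, $B\sim\MallowsBlock{q}$, and the inner product is taken against $\mathrm{Leb}_{[0,1]}\times\MallowsBlock{q}$. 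The Bernstein polynomials $(P_j)_j$ are linearly independent and satisfy $\sum_j P_j = 1$, so that $\mathrm{rank}\,\Gamma^{(d,r)}$ equals the dimension of $\mathrm{span}\{\phi_\pi : \abs{\pi}=r,\ \pi\text{ has $d$ blocks}\}$.

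\textbf{Positivity and the identity entry.} If $\pi\ne\id_d$ then some block $\pi_j$ is non-trivial, and $\occ{\pi_j}{B}$ takes distinct values on $B=\pi_j$ versus $B=\id_{\abs{\pi_j}}^\rev$ (both of positive $\MallowsBlock{q}$-mass), giving $\var{\phi_\pi}>0$ and hence $\Gamma^{(d)}_{\pi,\pi}>0$. For $\pi=\id_d$ I would bypass the projection formula and argue directly: every non-increasing $d$-subset of $[n]$ contains an inversion, so $\binom{n}{d}-\occ{\id_d}{\tau_n}\le\inv{\tau_n}\binom{n-2}{d-2}$, and $\inv{\tau_n}=\O_\P(n)$ for fixed $q<1$ yields $o_\P(n^{d-1/2})$; the CLT at scale $n^{d-1/2}$ then forces $\Gamma^{(d)}_{\id_d,\id_d}=0$.

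\textbf{Ranks when $d\in\{1,r-1,r\}$.} The case $d=r$ is immediate from the identity-entry argument. For $d=1$, $\phi_\pi=\occ{\pi}{B}$ is indexed by indecomposable $\pi\in\S_r$; any a.s.\ relation $\sum_\pi c_\pi\occ{\pi}{B}=$~const, tested at each indecomposable $B=\pi'$ of size $r$ (positive mass), gives $c_{\pi'}=$~const, and a second test at a block of size $>r$ (where $\sum_\pi\occ{\pi}{B}$ varies with $B$) forces $c\equiv0$. For $d=r-1$, patterns are parametrized by the position $j\in[d]$ of the unique $2\,1$-block; each $\phi_\pi$ factors as $P_j\cdot Y+Z$ with $Y=\alpha\occ{2\,1}{B}-\beta|B|$ non-constant (evaluate on blocks of size $1,2,3$) and $Z=\beta|B|$ linearly independent of $Y$, which combined with the linear independence of $(P_j)_j$ gives rank $d$.

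\textbf{The case $d=r-2$.} Patterns split into $3d$ "size-$3$-block at position $j$" types ($\sigma\in\{2\,3\,1,3\,1\,2,3\,2\,1\}$, $j\in[d]$) and $\binom{d}{2}$ "two-$2\,1$-blocks at positions $(j,k)$" types. A key preliminary is the linear independence over $\MallowsBlock{q}$ of the five random variables $\occ{2\,3\,1}{B},\occ{3\,1\,2}{B},\occ{3\,2\,1}{B},\occ{2\,1}{B},|B|$ (routine evaluation on blocks of size $\le3$). This implies that for each $\sigma$ the generators $P_j(t)\occ{\sigma}{B}$ are only touched by the corresponding size-$3$ subfamily, which contributes $d$ independent directions and stays independent from the two-$2\,1$ subfamily. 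For the latter, $\psi^{(j,k)}=c(P_j+P_k)\occ{2\,1}{B}+e(1-P_j-P_k)|B|$, and a kernel relation $\sum_{j<k}\beta_{jk}\psi^{(j,k)}=0$ reduces, via the independence of $(P_j)_j$, to $\sum_{(j,k)\ni\ell}\beta_{jk}=0$ for every $\ell\in[d]$, i.e.\ $\beta$ lies in the kernel of the unsigned edge-vertex incidence matrix of $K_d$. That matrix has rank $1$ for $d=2$ and $d$ for $d\ge3$, so the two-$2\,1$ subfamily contributes $1$ (resp.\ $d$) independent directions. Total rank is therefore $3d+1=7$ when $d=2$ and $3d+d=4d$ when $d\ge3$; since the matrix has size $\binom{d}{2}+3d$, invertibility holds iff $\binom{d}{2}-d\le0$, i.e.\ iff $d\le3$. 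The main conceptual obstacle is extracting the precise Bernstein structure from the Hoeffding expansion (with the correct bookkeeping for the random block count $M_n$ and the truncated last block); once this is set up, the rank count reduces to the clean combinatorial computation of the incidence rank of $K_d$.
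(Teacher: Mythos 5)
Your overall strategy (analyze the kernel of the limiting quadratic form through the first-order projections of the $U$-statistic, and reduce the $d=r-2$ case to the rank of the unsigned vertex--edge incidence system of $K_d$, which is exactly the paper's \Cref{lem: rank system triangular}) is the same as the paper's, and your ad hoc argument that $\Gamma^{(d)}_{\id_d,\id_d}=0$ (comparing $\binom{n}{d}-\occ{\id_d}{\tau_n}$ with $\inv{\tau_n}\binom{n-2}{d-2}=\Landau{\O_\P}{}{n^{d-1}}$) is a valid alternative to the paper's. However, the formula on which all your rank computations rest is not correct: $\Gamma^{(d)}_{\pi,\rho}$ is \emph{not} the inner product of $\phi_\pi(t,B)\propto\sum_j P_j(t)\occ{\pi_j}{B}\prod_{j'\ne j}\expec{\occ{\pi_{j'}}{B'}}$ against $\Leb\times\MallowsBlock{q}$. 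Because the number of blocks $K_n$ is a renewal time, the actual covariance \eqref{eq: formula covariance matrix q fixed} carries additional terms in $\cov{\abs{B}}{\occ{\pi_j}{B}}$ and $\var{\abs{B}}$; equivalently (this is Janson's criterion (3.29), which the paper uses), degeneracy of $\Lambda^{\rm T}\Gamma^{(d,r)}\Lambda$ holds if and only if each univariate projection $f_{\Lambda,j}(B)$ is a.s.\ \emph{proportional to $\abs{B}$}, not a.s.\ constant as your representation would give. The discrepancy is not cosmetic: your formula yields $\langle\phi_{\id_d},\phi_{\id_d}\rangle=\expec{\abs{B}}^{2d-2}\var{\abs{B}}>0$, contradicting the very entry $\Gamma^{(d)}_{\id_d,\id_d}=0$ you are proving (which is presumably why you bypassed it), and already for $d=1$ the entries of your Gram matrix differ from \eqref{eq: formula covariance matrix q fixed}. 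Your positivity argument inherits this problem twice over: with the correct criterion you must show $\occ{\pi_j}{B}/\expec{\occ{\pi_j}{B}}\ne\abs{B}/\expec{\abs{B}}$ with positive probability (the paper does this with the size-one block, $\prob{\abs{B}=1}=1-q>0$), not merely that $\occ{\pi_j}{B}$ is non-constant; and even for non-constancy your two witnesses $B=\pi_j$ and $B=\id_{\abs{\pi_j}}^{\rev}$ coincide whenever $\pi_j$ is decreasing (e.g.\ $\pi_j=2\,1$).

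It so happens that for the families you treat ($d=r-1$, $d=r-2$, $d=1$ with $r\ge2$) the kernel of your (incorrect) quadratic form coincides with the kernel of the true one: the correct condition is that the coefficients $\lambda_{\rho,j}$ of all non-trivial patterns $\rho$ in $f_{\Lambda,j}$ vanish, while your condition additionally forces the $\abs{B}$-component of each $f_{\Lambda,j}$ to vanish, and in these families that extra constraint is automatically implied (e.g.\ for $d=r-2$ it reduces to $\sum_{j<k}\beta_{jk}=0$, which follows by summing the incidence relations). But you neither observe nor prove this, so as written the rank computations are carried out against the wrong matrix and do not establish the proposition. The fix is exactly the "bookkeeping" you defer: either derive the corrected projection representation (replace $\occ{\pi_j}{B}\prod_{j'\ne j}\expec{\occ{\pi_{j'}}{B}}$ by its version centered against $\abs{B}$, i.e.\ $f_{\Lambda,j}(B)-\abs{B}\expec{f_{\Lambda,j}(B)}/\expec{\abs{B}}$), or directly invoke the degeneracy criterion for renewal-stopped asymmetric $U$-statistics as the paper does, and rerun your (otherwise correct) $K_d$-incidence rank count against that criterion.
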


In particular, note that the matrix $\Gamma^{(d,r)}$ is not always invertible.
The fact that $\Gamma^{(d)}_{\id,\id} =0$ implies that the convergence \eqref{eq: CLT joint q fixed} is degenerate for $\pi=\id$.
The fluctuations of $\occ{\id}{\tau_n}$ can actually be found at a lower order:

\begin{theorem}\label{th: CLT for identity pattern with fixed q}
    Fix $r\ge2$, $q\in(0,1)$, and let $\tau_n \sim\Mallows{n}{q}$ for each $n\ge1$.
    We have the following convergence in distribution as $n\to\infty$:
    \begin{equation*}
        \frac{ \occ{\id_r}{\tau_n} - \binom{n}{r} + \binom{n}{r-1} (r-1) \frac{\expec{\inv{B}}}{\expec{\abs{B}}} }{n^{r-3/2}} 
        \,\cv{}\, \Normal{0}{\gamma_r^2}
    \end{equation*}
    where $B\sim\MallowsBlock{q}$, and $\gamma_r^2 > 0$ is given by \eqref{eq: asymptotic variance increasing subsequences}.
    This also holds with convergence of all moments.
\end{theorem}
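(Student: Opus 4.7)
The plan is to reduce this CLT to a renewal central limit theorem for $\inv{\tau_n}$, via a combinatorial identity. The scale drops from $n^{r-1/2}$ to $n^{r-3/2}$ precisely because $\id_r$ has $d=r$ blocks, so that by \Cref{prop: non-degeneracy of CLT and rank of matrix} the entire matrix $\Gamma^{(r,r)}$ vanishes: the Gaussian fluctuations at the "block-choice" scale disappear, and what remains at lower order is driven by $\inv{\tau_n}$ itself. For an $r$-subset $I=\{i_1<\dots<i_r\}\subseteq[n]$, set $J_{a,b}(I):=\One{\tau_n(i_a)>\tau_n(i_b)}$, so that $\One{\pat{I}{\tau_n}=\id_r}=\prod_{a<b}(1-J_{a,b}(I))$. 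Expanding the product and summing over $I$ yields the exact identity
\[
\occ{\id_r}{\tau_n}=\binom{n}{r}-\binom{n-2}{r-2}\inv{\tau_n}+R_n, \qquad R_n:=\sum_{k\ge 2}(-1)^{k}\sum_{I}\binom{\iota(I)}{k},
\]
where $\iota(I)$ counts the inversion pairs of $\tau_n$ lying inside $I$.

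The remainder $R_n$ is bounded by exchanging the order of summation: $\sum_I\binom{\iota(I)}{k}=\sum_v A_{k,v}\binom{n-v}{r-v}$, where $A_{k,v}$ is the number of $k$-subsets of inversions of $\tau_n$ spanning exactly $v$ distinct positions. Since every inversion lies inside a single block of $\Sigma$ and block sizes under $\MallowsBlock{q}$ have all moments finite for $q<1$, any $k$-subset of inversions using $v<2k$ positions must cluster within blocks. A direct moment estimate then gives $A_{k,v}=\Landau{\O_\P}{}{n}$ for $v<2k$, while $A_{k,2k}\le\inv{\tau_n}^k=\Landau{\O_\P}{}{n^k}$; summing the corresponding binomial factors gives $\sum_I\binom{\iota(I)}{k}=\Landau{\O_\P}{}{n^{r-k}}$ for each $k\ge 2$, hence $R_n=\Landau{\O_\P}{}{n^{r-2}}=\Landau{o_\P}{}{n^{r-3/2}}$.

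The regenerative decomposition from \Cref{sec: results regenerative} yields $\inv{\tau_n}=\sum_{j=1}^{K_n}\inv{B_j}+\inv{\tilde B_n}$, with $K_n$ the number of complete blocks before position $n$ and $\tilde B_n$ the partial final block. Setting $\mu:=\expec{\inv{B}}$ and $m:=\expec{\abs{B}}$, the classical renewal CLT for cumulative rewards gives
\[
\frac{\inv{\tau_n}-n\mu/m}{\sqrt{n}}\cv{d}\Normal{0}{\sigma^2}, \qquad \sigma^2:=\tfrac{1}{m}\var{\inv{B}-(\mu/m)\abs{B}},
\]
together with convergence of all moments (since $\abs{B}$ and $\inv{B}$ have all moments finite). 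An elementary expansion gives $n\binom{n-2}{r-2}=\binom{n}{r-1}(r-1)+\Landau{\O}{}{n^{r-2}}$, so substituting into the combinatorial identity:
\[
\occ{\id_r}{\tau_n}-\binom{n}{r}+\binom{n}{r-1}(r-1)\tfrac{\mu}{m}=-\binom{n-2}{r-2}\bigl(\inv{\tau_n}-n\mu/m\bigr)+\Landau{o_\P}{}{n^{r-3/2}}.
\]
Dividing by $n^{r-3/2}$ and using $\binom{n-2}{r-2}/n^{r-2}\to 1/(r-2)!$, the theorem follows with $\gamma_r^2=\sigma^2/\bigl((r-2)!\bigr)^2>0$; convergence of all moments transfers from the renewal CLT.

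The main obstacle is the remainder estimate. A naive bound using only the global inversion count gives $\sum_I \iota(I)^2\le \inv{\tau_n}\binom{n-2}{r-2}=\Landau{\O_\P}{}{n^{r-1}}$, which is far too weak to beat the scale $n^{r-3/2}$. The improvement to $\Landau{\O_\P}{}{n^{r-2}}$ crucially uses the block locality of inversions given by the regenerative structure: any two inversions sharing a position must lie in the same block, so the finite moments of $\abs{B}$ cap the number of such overlapping configurations at $\Landau{\O_\P}{}{n}$ rather than $\Landau{\O_\P}{}{n^2}$.
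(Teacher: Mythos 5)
Your proposal is correct in outline but takes a genuinely different route from the paper. The paper starts from the identity $\sum_{\pi\in\S_r}\occ{\pi}{\tau_n}=\binom{n}{r}$, discards all patterns with at most $r-2$ blocks via the law of large numbers in \Cref{th: LLN and CLT for joint patterns with fixed q}, and applies the joint CLT of that theorem (Janson's asymmetric $U$-statistic machinery with $d=r-1$) to the sum of the adjacent-transposition counts, getting $\gamma_r^2=\Lambda^{\rm T}\Gamma^{(r-1)}\Lambda$ from the general covariance formula and positivity from \Cref{prop: non-degeneracy of CLT and rank of matrix}. You instead linearize by inclusion--exclusion over inversions inside $r$-subsets, reducing everything to the scalar statistic $\inv{\tau_n}$ and the classical renewal-reward CLT; this is more elementary (only the one-dimensional stopped-sum CLT is needed), the centering check $n\binom{n-2}{r-2}=(r-1)\binom{n}{r-1}+\Landau{\O}{}{n^{r-2}}$ is fine, and your limit variance $\sigma^2/((r-2)!)^2$ with $\sigma^2=\frac{1}{\expec{\abs{B}}}\var{\inv{B}-\frac{\expec{\inv{B}}}{\expec{\abs{B}}}\abs{B}}$ agrees exactly with \eqref{eq: asymptotic variance increasing subsequences} after factoring $\expec{\abs{B}}^2$ out of the variance there. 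Two points you assert without argument: $\sigma^2>0$ (this needs that $\inv{B}/\abs{B}$ is non-constant, which follows since the blocks $1$, $2\,1$ and $3\,2\,1$ all have positive probability), and $L^p$ bounds on the error terms if you want convergence of all moments (these do follow from the finite moments of $\abs{B}$, as in the paper).

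The one step that fails as written is the remainder estimate. The claim that $A_{k,v}=\Landau{\O_\P}{}{n}$ for all $v<2k$ is false: for $k=3$, $v=5$, take an overlapping pair of inversions (necessarily inside one block) together with a third inversion in a far-away block; such configurations are counted about $\inv{\tau_n}\cdot\sum_j\abs{B_j}^4$ times, i.e.~of order $n^2$ in probability. Likewise $\sum_I\binom{\iota(I)}{k}=\Landau{\O_\P}{}{n^{r-k}}$ fails for $k=3$: three inversions forming a $3\,2\,1$ inside one block span only three positions, contributing order $n\cdot n^{r-3}=n^{r-2}$. Fortunately the bound you actually need, $R_n=\Landau{\O_\P}{}{n^{r-2}}$, is still true, and the repair is your own block-locality idea applied cluster by cluster: group the $k$ chosen inversions into connected components of the graph in which two inversions are adjacent when they share a position; each non-singleton component lies inside a single block. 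A configuration with $s$ singleton components and $m$ components of size at least $2$ is counted $\Landau{\O_\P}{}{n^{s+m}}$ times (using $\inv{\tau_n}\le\sum_j\abs{B_j}^2$ and $\sum_j\abs{B_j}^{2k_i}=\Landau{\O_\P}{}{n}$, by the finite moments of $\abs{B}$), while it spans $v\ge 2s+3m$ positions since distinct components are position-disjoint; hence its contribution to $\sum_I\binom{\iota(I)}{k}$ is $\Landau{\O_\P}{}{n^{s+m}\,n^{r-v}}=\Landau{\O_\P}{}{n^{r-s-2m}}$, which is $\Landau{\O_\P}{}{n^{r-2}}$ whenever $k\ge2$. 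Since $k\le\binom{r}{2}$, summing over $k$ gives $R_n=\Landau{\O_\P}{}{n^{r-2}}=\Landau{o_\P}{}{n^{r-3/2}}$, and the same decomposition combined with H\"older's inequality yields the moment bounds needed for convergence of all moments. With that correction, your argument is a valid and arguably simpler alternative to the paper's proof.
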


The previous three results are proved in \Cref{sec: proofs pattern-joint CLT q fixed}.
They heavily rely on the regenerative property of Mallows random permutations, which allows us to approximate
\begin{equation}\label{eq: approx occ U-stat}
    \occ{\pi}{\tau_n}
    \approx \sum_{1\le k_1<\dots<k_d\le K_n} \prod_{j=1}^d \occ{\pi_j}{B_{k_j}} ,
\end{equation}
where $K_n$ is an appropriate stopping time.
One may recognize here a type of $U$-statistic; the proofs then follow with the same techniques as in \cite{J18,J23_forest}.

\medskip

A recent line of work is the introduction of {\em continuous-time Mallows processes} in \cite{C22}.
There, a (continuous-time) Mallows process is defined as an $\S_n$-valued càdlàg stochastic process $\left( \tau_{n,t} \right)_{t\in[0,1)}$ with Mallows marginals, i.e.~$\tau_{n,t} \sim \Mallows{n}{t}$ for all $t\in[0,1)$.
There are several properties that a Mallows process may or may not have; in particular, the author of \cite{C22} studies what they call {\em smooth} and {\em regular} Mallows process.
Here we propose another natural approach to define a Mallows process, which turns out to be smooth but {\it a priori} not regular.

In \Cref{sec: CTI Mallows}, we define the {\em continuous-time infinite Mallows process} $\left( \Sigma_t \right)_{t\in[0,1)} \sim \ContinuousMallows{\N^*}$, which is an infinite analog of the regular Mallows process in \cite{C22}, and a \enquote{unilateral} variant of the bi-infinite Mallows process in \cite{AK24}.
We then define $\left( \tau_{n,t} \right)_{t\in[0,1)} := \left(\big. \pat{[n]}{\Sigma_t} \right)_{t\in[0,1)}$, for each $n\in\N^*$.
It is easy to check that, with the terminology of \cite{C22}, this is a smooth Mallows process for each $n$.

An important property of the continuous-time infinite Mallows process is that for any $q\in(0,1)$, the restricted process $\left( \Sigma_t \right)_{t\in[0,q]}$ can be decomposed into a sum of i.i.d.~permutation processes, called {\em $q$-block processes} (\Cref{th: CTI Mallows process as sum of blocks}).
Thanks to this, $\occ{\pi}{\tau_{n,t}}$ can be approximated by a $U$-statistic as in \eqref{eq: approx occ U-stat}, jointly in $t\in[0,q]$ for any fixed $q\in(0,1)$.
This yields the following process convergence:

\begin{theorem}\label{th: Donsker for time-joint single pattern}
    Let $\left( \Sigma_t \right)_{t\in[0,1)} \sim \ContinuousMallows{\N^*}$, and define $\tau_{n,t} := \pat{[n]}{\Sigma_t} \sim \Mallows{n}{t}$ for all $n\in\N^*$ and $t\in[0,1)$.
    Fix a pattern $\pi\in\S$ with $d$ blocks.
    Then, as $n\to\infty$, the finite-dimensional distributions of the stochastic process
    \[
    t\in[0,1) \mapsto
    \frac{ \occ{\pi}{\tau_{n,t}} - \binom{n}{d} e_{\pi,t} }{n^{d-1/2}} 
    \]
    weakly converge to that of the centered Gaussian process $\left( X_{\pi,t} \right)_{t\in[0,1)}$ with covariance function $H_\pi$ defined by \Cref{eq: formula covariance function Donsker}\footnote{
    The formula given in \eqref{eq: formula covariance function Donsker} for $H_\pi(s,t)$ involves the law of the $q$-block processes, for an arbitrary choice of $q \ge \max(s,t)$.
    Although this law depends on $q$, the value of $H_\pi(s,t)$ does not, since the discrete process $\occ{\pi}{\tau_{n,\cdot}}$ is independent of $q$.
    }.
    This also holds with convergence of all joint moments.
\end{theorem}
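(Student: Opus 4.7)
The plan is to extend the approach of \Cref{th: LLN and CLT for joint patterns with fixed q} from a single value of $q$ to joint statements at finitely many times $0 \le t_1 < \cdots < t_m < 1$, by replacing the i.i.d.\ Mallows blocks with i.i.d.\ $q$-block \emph{processes}. Fix an arbitrary $q \in [t_m, 1)$. By \Cref{th: CTI Mallows process as sum of blocks}, the restricted process $(\Sigma_t)_{t \in [0,q]}$ decomposes as a direct sum $B_1 \oplus B_2 \oplus \cdots$ of i.i.d.\ $\ContinuousBlock{q}$-distributed block processes. Let $K_n$ be the number of these blocks needed to cover $[n]$ at time $q$; this is exactly the stopping time already used in the proof of \Cref{th: LLN and CLT for joint patterns with fixed q}, and the renewal theorem gives $K_n / n \to 1 / \expec{\abs{B(q)}}$ almost surely.

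For each fixed $t_l \le q$, the $\oplus$-decomposition of $\tau_{n,t_l}$ yields
\begin{equation*}
\occ{\pi}{\tau_{n,t_l}} = \sum_{1 \le k_1 < \cdots < k_d \le K_n} \prod_{j=1}^d \occ{\pi_j}{B_{k_j}(t_l)} + R_{n,l},
\end{equation*}
where $R_{n,l}$ gathers occurrences of $\pi$ which straddle block boundaries or are partly contained in the last, possibly truncated block. The arguments of \Cref{th: LLN and CLT for joint patterns with fixed q}, which only use the i.i.d.\ block structure and the exponential tails of $\abs{B(q)}$, apply marginal-wise and give $R_{n,l} = \Landau{o_\P}{}{n^{d-1/2}}$ together with $L^p$ analogues for all $p$. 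It then remains to establish asymptotic normality, with moment convergence, of the $\R^m$-valued $U$-statistic
\begin{equation*}
V_n := \bigg( \sum_{1 \le k_1 < \cdots < k_d \le K_n} \prod_{j=1}^d \occ{\pi_j}{B_{k_j}(t_l)} \bigg)_{l = 1}^m.
\end{equation*}
This follows from a multivariate Hoeffding decomposition: the first-order projection of $V_n$ is an i.i.d.\ sum of $\R^m$-valued vectors indexed by block number, and the multivariate CLT (with moment convergence, as in \cite{J18, J23_forest}) yields a centered Gaussian limit at scale $n^{d-1/2}$. The asymptotic covariance between coordinates $l$ and $l'$ involves only the joint law of $\bigl(\occ{\pi_j}{B(t_l)}, \occ{\pi_j}{B(t_{l'})}\bigr)_{j=1}^d$ for a single $B \sim \ContinuousBlock{q}$, which is exactly the formula in \Cref{eq: formula covariance function Donsker}.

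The main obstacle is verifying that the limit law does not depend on the auxiliary choice of $q \in [t_m, 1)$, as claimed in the footnote. Since $\occ{\pi}{\tau_{n,t_l}}$ is a measurable functional of $(\Sigma_t)_{t \in [0, t_m]}$ alone, the limit must be intrinsic, but this is not directly apparent from the expression of $H_\pi$. The cleanest way is a consistency argument comparing two admissible values $q < q'$: the $q'$-block decomposition is coarser than the $q$-block decomposition, each $q'$-block being a concatenation of consecutive $q$-blocks. The first-order Hoeffding projection of $V_n$ built from $q'$-blocks is therefore a regrouping of the one built from $q$-blocks, and the two produce the same Gaussian limit. A secondary bookkeeping difficulty is the uniform-in-$l$ control of $R_{n,l}$ in $L^p$; since only finitely many times are considered, this follows from the single-time estimates already established in the proof of \Cref{th: LLN and CLT for joint patterns with fixed q} together with a union bound.
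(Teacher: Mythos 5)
Your proposal follows essentially the same route as the paper's proof: fix an auxiliary $q$ covering the chosen times, decompose $\left( \Sigma_t \right)_{t\in[0,q]}$ into i.i.d.~$q$-block processes via \Cref{th: CTI Mallows process as sum of blocks}, approximate each $\occ{\pi}{\tau_{n,t_\ell}}$ by an asymmetric $U$-statistic in the block processes stopped at the renewal time $K_n$ with the same $\Landau{\O}{}{n^{d-1}}$-type control of boundary and last-block contributions, and invoke the renewal $U$-statistic CLT with moment convergence of \cite{J18} (the paper does this through scalar linear combinations and Cram\'er--Wold rather than a multivariate Hoeffding decomposition, but this is the same mechanism and yields the covariance \eqref{eq: formula covariance function Donsker}). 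The only points you gloss over are minor: identifying the centering constant with $e_{\pi,t}$, i.e.~the identity \eqref{eq: double expression for e} obtained by comparison with the fixed-$q$ law of large numbers of \Cref{th: LLN and CLT for joint patterns with fixed q}, and the $q$-independence of $H_\pi$, for which no coarsening/regrouping of block decompositions is needed -- it follows immediately from uniqueness of weak limits of the $q$-free random variables, which is precisely the \enquote{intrinsic} observation you make before proposing the heavier consistency argument.
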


Simulations of the random process $\left( \occ{\pi}{\tau_{n,t}} \right)_{t\in[0,1)}$ are shown in \Cref{fig: inversion process} for $\pi = 21$, and in \Cref{fig: occ size3 process} for three other patterns.
The value of $e_{\pi,t}$ is known to be $\frac{t}{1-t}$ for $\pi\in\{21, 132, 213\}$ (see \Cref{rem: explicit formula for e with single inversion}), but is unknown for most other patterns.
Simulations suggest that $e_{\pi,t}$ might be proportional to $\frac{t^2}{(1-t)^2}$ for $\pi\in\{231, 312, 321\}$.

\begin{figure}[ht]
    \centering
    \includegraphics[width=.47\linewidth]{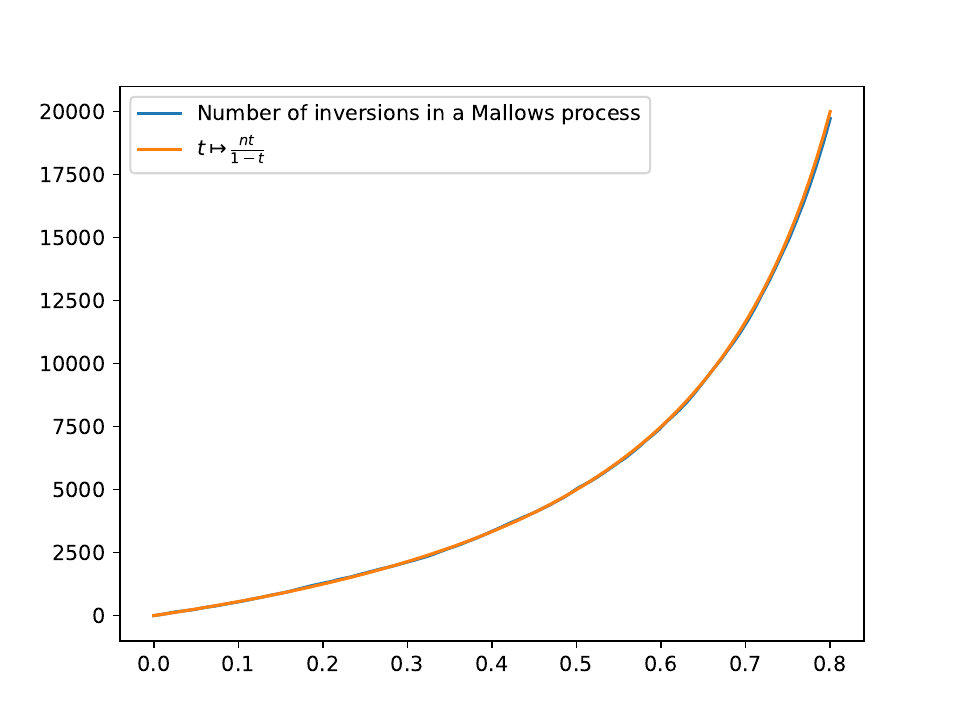}
    \includegraphics[width=.47\linewidth]{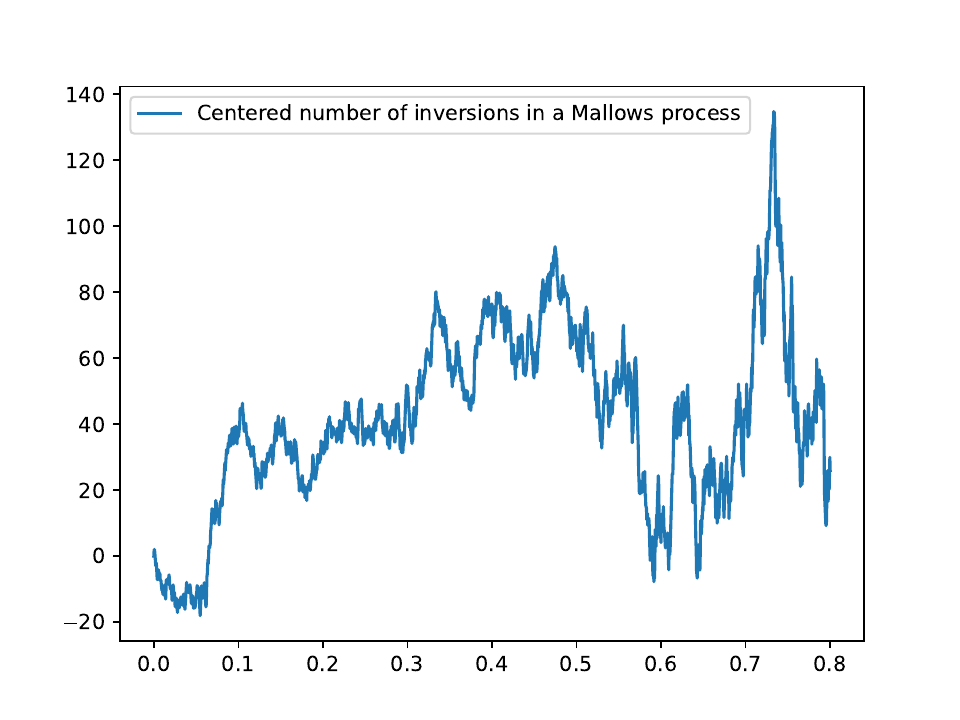}
    \includegraphics[width=.47\linewidth]{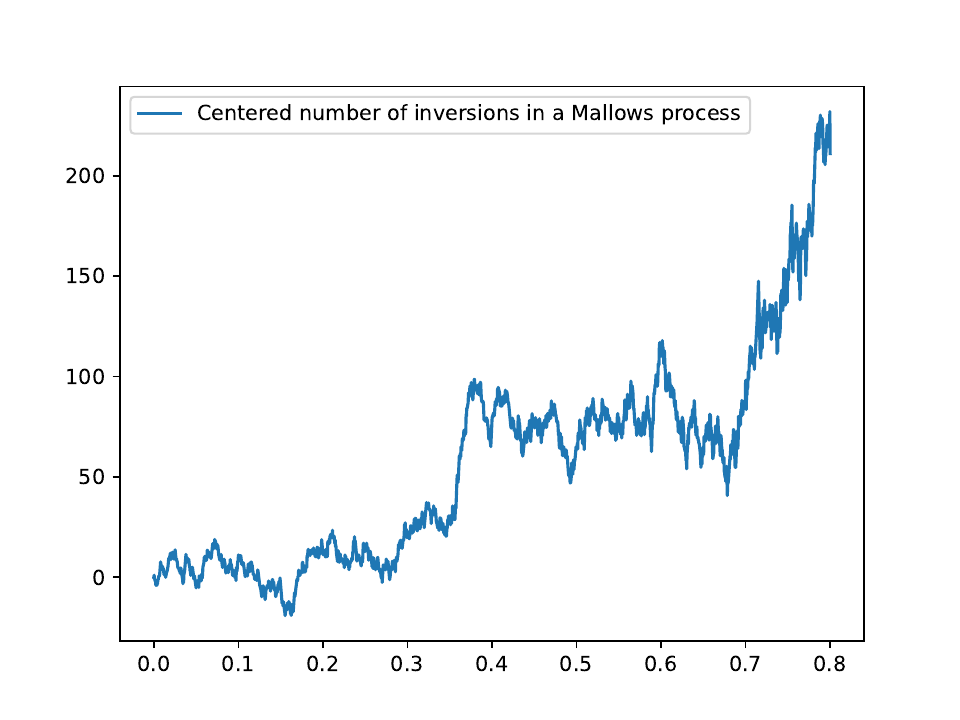}
    \includegraphics[width=.47\linewidth]{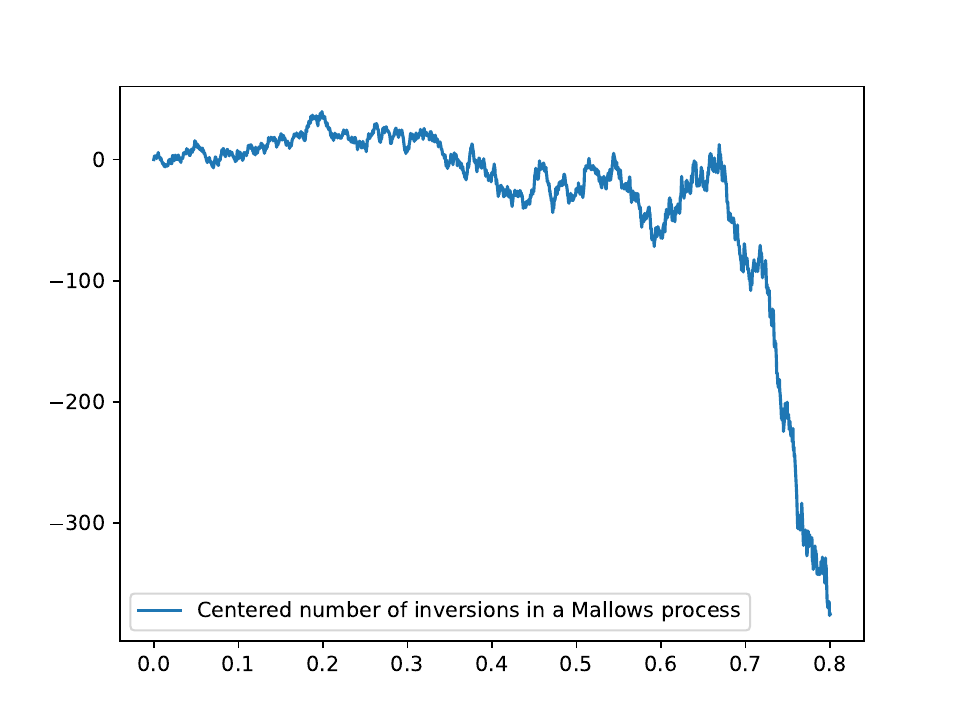}
    \caption{The first figure shows a simulation of $\left( \inv{\tau_{n,t}} \right)_{t\in(0,q]}$ and its first-order limit $\left( n\, e_{2 1,t} = \frac{nt}{1-t} \right)_{t\in(0,q]}$. 
    The other three show simulations of the centered process $\left( \inv{\tau_{n,t}} - n\, e_{2 1,t} \right)_{t\in(0,q]}$, approximating $\sqrt n \cdot \left( X_{2 1,t} \right)_{t\in(0,q]}$.
    This is done with $n=5000$ and up to time $q=0.8$.
    Note that the variance of the process $\left( X_{2 1,t} \right)$ seems to increase as $t$ gets closer to $1$:
    this is unsurprising when taking into account the fact that $\occ{\pi}{\tau_{n,t_n}}$ has a higher order of magnitude when $t_n \cv{n\to\infty} 1$.}
    \label{fig: inversion process}
\end{figure}

\begin{figure}[ht]
    \centering
    \includegraphics[width=.47\linewidth]{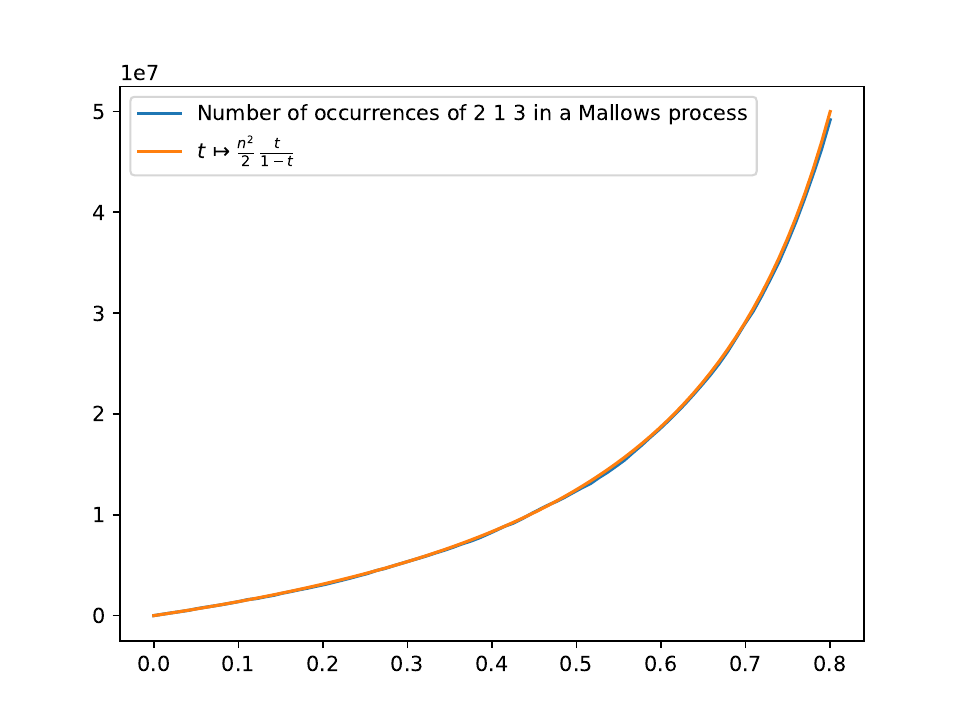}
    \includegraphics[width=.47\linewidth]{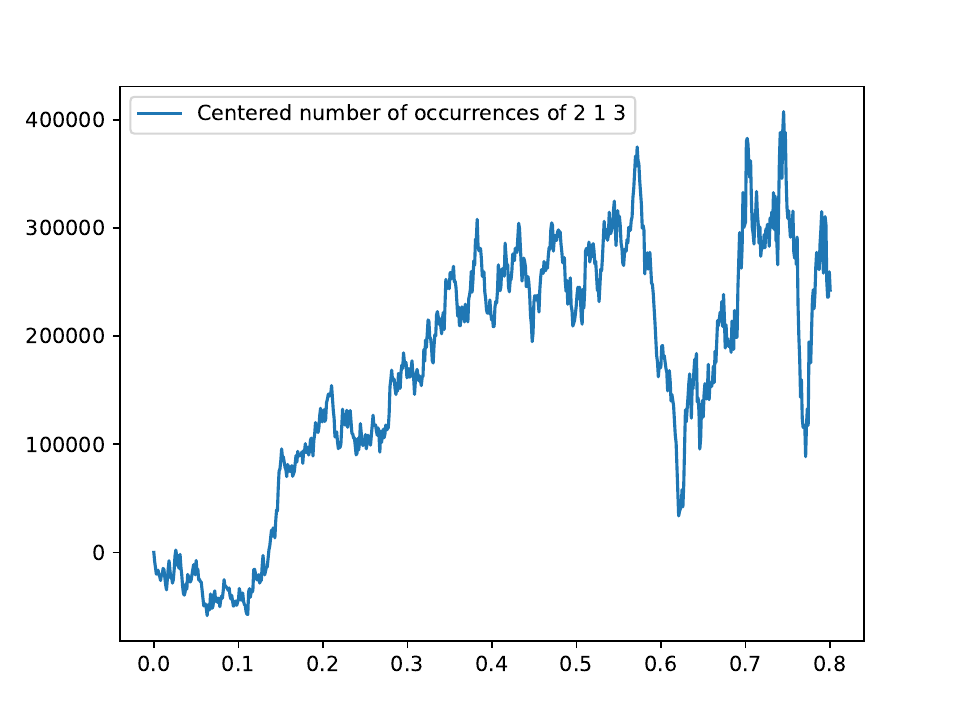}
    \includegraphics[width=.47\linewidth]{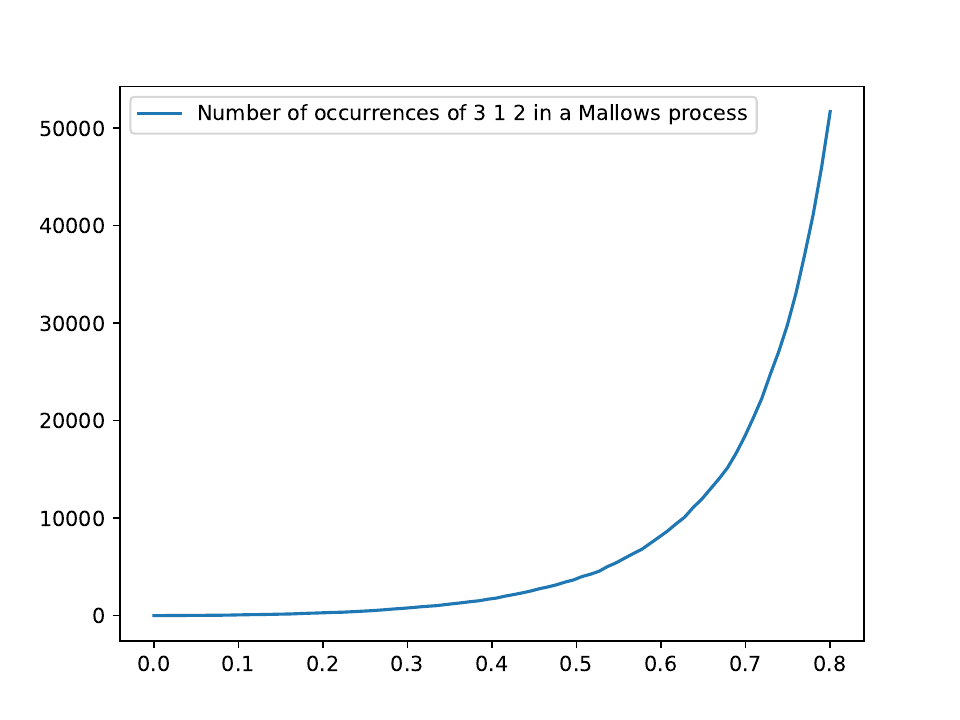}
    \includegraphics[width=.47\linewidth]{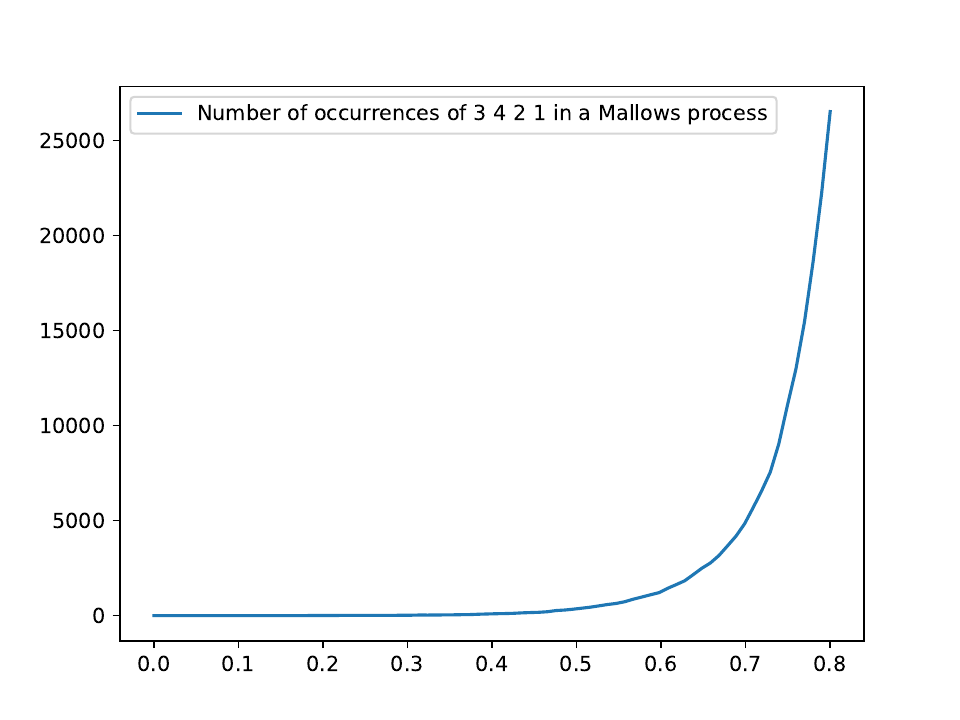}
    \caption{The first two figures show simulations of $\left( \occ{\pi}{\tau_{n,t}} \right)_{t\in(0,q]}$ and $\left( \occ{\pi}{\tau_{n,t}} - \binom{n}{2} e_{\pi,t} \right)_{t\in(0,q]}$ for $\pi=213$.
    The other two show simulations of $\left( \occ{\pi}{\tau_{n,t}} \right)_{t\in(0,q]}$ for $\pi\in \{312, 3421\}$, for which the expression of $e_{\pi,t}$ is unknown.
    This is done with $n=5000$ and up to time $q=0.8$.}
    \label{fig: occ size3 process}
\end{figure}

Finally, we study the regularity of the limit processes.

\begin{proposition}\label{prop: continuity}
    For any pattern $\pi\in\S$, the following holds:
    \begin{itemize}
        \item The function $t\in[0,1) \mapsto e_{\pi,t}$ is locally $\gamma$-H\"older-continuous, for every $\gamma\in(0,1)$.
        \item The Gaussian process $t\in(0,1) \mapsto X_{\pi,t}$ admits a version that is locally $\gamma$-H\"older-continuous, for every $\gamma\in(0,\tfrac12)$.
    \end{itemize}
\end{proposition}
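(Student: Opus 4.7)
\textbf{Part 1: Hölder continuity of $t \mapsto e_{\pi,t}$.} The plan is to exploit the monotone coupling provided by the continuous-time infinite Mallows process $\left(\Sigma_t\right)_{t \in [0,1)}$. As $t$ increases, inversions are only added, so the block decomposition of $\Sigma_t$ can only coarsen: every block of $\Sigma_t$ is a concatenation of consecutive blocks of $\Sigma_s$ for $s \le t$. Let $B^{(t)}$ denote the first block of $\Sigma_t$. For $s \le t$ in a compact subset of $[0,1)$, the event $\{B^{(s)} \ne B^{(t)}\}$ requires that some inversion added in the time interval $(s,t]$ cross the right boundary of $B^{(s)}$; using the jump structure of the process together with the finiteness of the relevant rate on compact intervals, this gives $\prob{B^{(s)} \ne B^{(t)}} \le C(t-s)$. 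Hölder's inequality then yields, for every $p\ge1$,
\[
\abs{\expec{\occ{\pi_j}{B^{(t)}}} - \expec{\occ{\pi_j}{B^{(s)}}}} \le \expec{\left(\occ{\pi_j}{B^{(t)}} + \occ{\pi_j}{B^{(s)}}\right)^p}^{1/p} \prob{B^{(s)} \ne B^{(t)}}^{1 - 1/p} .
\]
Since moments of block statistics are finite on compact sub-intervals of $[0,1)$, letting $p$ be arbitrarily large produces Hölder continuity of $t \mapsto \expec{\occ{\pi_j}{B^{(t)}}}$ of every order $\gamma < 1$. The same estimate applies to $t\mapsto\expec{\abs{B^{(t)}}}$, and since $\expec{\abs{B^{(t)}}} \ge 1$ uniformly, the product formula defining $e_{\pi,t}$ preserves local Hölder regularity.

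\textbf{Part 2: Hölder regularity of $X_{\pi,t}$.} Here I would invoke Kolmogorov's continuity theorem. Since $X_\pi$ is centered Gaussian, for every integer $k\ge1$,
\[
\expec{\abs{X_{\pi,t} - X_{\pi,s}}^{2k}} = C_k \, \var{X_{\pi,t} - X_{\pi,s}}^k ,
\]
so it suffices to bound the variance of increments by $C(t-s)$ on compact subsets of $(0,1)$. Using formula \eqref{eq: formula covariance function Donsker}, the increment variance can be expressed via the $q$-block process for any $q$ just above $\max(s,t)$. Thanks to the monotone coupling from part 1, the relevant functional of this process vanishes outside the event that a block boundary is destroyed in $(s,t]$, which has probability $O(t-s)$. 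Combined with $L^p$-moment bounds on block statistics (using the same Hölder argument as in part 1), this gives $\var{X_{\pi,t} - X_{\pi,s}} \le C(t-s)$. Kolmogorov's criterion with $k$ arbitrarily large then produces a modification that is locally $\gamma$-Hölder for every $\gamma < \tfrac12$.

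\textbf{Main obstacle.} The crux of the proof is the quantitative control of $\prob{B^{(s)} \ne B^{(t)}}$ and of the corresponding variance increment, uniformly on compact subsets of $[0,1)$, while simultaneously absorbing the fact that the block sizes $\abs{B^{(t)}}$ are unbounded. This demands uniform moment bounds for $\abs{B^{(t)}}$ and $\occ{\pi_j}{B^{(t)}}$ on compact sub-intervals, which should follow from the geometric-type decay of Mallows block sizes but must be made explicit. The unavoidable loss of a factor $\prob{B^{(s)} \ne B^{(t)}}^{1/p}$ in the Hölder estimate is precisely what prevents us from upgrading to Lipschitz continuity, and explains both the bound $\gamma<1$ in part 1 and, after the $\sqrt{\cdot}$ squeeze intrinsic to Gaussian processes, the bound $\gamma<\tfrac12$ in part 2.
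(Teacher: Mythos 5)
Your overall strategy is sound, and for the first bullet it is genuinely different from the paper's. The paper controls $\abs{\expec{\occ{\rho}{B^q_t}}-\expec{\occ{\rho}{B^q_s}}}$ by working with the $q$-block process of \emph{fixed} size and bounding the change in pattern counts by the increment of inversions (\Cref{lem: occ control from inv control}), whose first two moments are shown to be $\O(t-s)$ by transferring estimates for the i.i.d.\ geometric birth processes to the block level (\Cref{lem: increment law}, \Cref{cor: increment moments}, \Cref{lem: Lipschitz jumps in q-block}). You instead bound the probability that the first block changes at all and pay a $1/p$ Hölder loss against uniform moments; this works, and the monotone coupling gives the uniform moment bounds for free since $\abs{B^{(t)}}\le\abs{B^{(q)}}$ for $t\in[0,q]$. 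One caveat: the event $\{B^{(s)}\ne B^{(t)}\}$ does \emph{not} require an inversion crossing the right boundary of $B^{(s)}$ — a jump at an index interior to the block changes its content without changing its size — so the correct containment is in the event that some $R_{i,\cdot}$ with $i\le\abs{B^{(s)}}$ jumps during $(s,t]$. Conditioning on the time-$s$ information, the expected number of such jumps is $\frac{t-s}{1-t}\expec{\inv{B^{(s)}}+\abs{B^{(s)}}}\le C_q(t-s)$, so your bound $\prob{B^{(s)}\ne B^{(t)}}\le C_q(t-s)$ survives, but the justification as written is incorrect.

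For the second bullet your route (Gaussianity plus Kolmogorov's criterion) is the same as the paper's, but the decisive step is asserted rather than proved. The increment variance equals $H_\pi(t,t)+H_\pi(s,s)-2H_\pi(s,t)$, and \eqref{eq: formula covariance function Donsker} is a combination of covariances of the \emph{normalized} block statistics $\occ{\pi_j}{B^q_t}/\expec{\occ{\pi_j}{B^q_t}}$ together with the deterministic prefactors $e_{\pi,s}e_{\pi,t}$; the relevant functional therefore does not vanish off the jump event, because the normalizing expectations and the $e$-factors themselves change between $s$ and $t$. One has to split off these deterministic differences (controlled by your Part 1) from the random ones (controlled by the jump-probability/moment argument), and the denominators $\expec{\occ{\pi_i}{B^q_s}}$ degenerate as $s\to0$ for non-identity blocks — which is exactly why the statement is only local on $(0,1)$ and why the paper's \Cref{cor: Holder H} carries an $\epsilon$-dependence in its constants. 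This bookkeeping is the bulk of the paper's argument and is missing from yours. Finally, your sketched estimate yields $\var{X_{\pi,t}-X_{\pi,s}}\le C(t-s)^{\gamma}$ for every $\gamma<1$ rather than the claimed Lipschitz bound $C(t-s)$ (the Hölder loss you invoke in Part 1 reappears here); this is harmless, since Kolmogorov's theorem still gives every exponent below $\tfrac12$, but the claim as stated is stronger than what your argument delivers.
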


The proof of \Cref{th: Donsker for time-joint single pattern} is conducted in \Cref{sec: proof time-joint CLT}.
It uses once again the tools of \cite{J18}, as for the proof of \Cref{th: LLN and CLT for joint patterns with fixed q}.
Then, in \Cref{sec: proof continuity}, we establish some estimates on the number of \enquote{jumps} occuring in $\left( \Sigma_t \right)_{t\in[0,1)}$, before proving \Cref{prop: continuity}.

\subsection{Open questions}

There is still much work to be done in the study of classical pattern occurrences in Mallows random permutations.
Below, we list a few questions for further research on this topic.

\begin{question}
    In the \enquote{thermodynamic} regime $n(1-q) \to \beta\in(0,\infty)$, the Mallows distribution has a non-trivial permuton limit.
    \Cref{eq: first order in thermodynamic regime} shows a phase transition for the first-order asymptotics of $\occ{\pi}{\tau_n}$, but what are the asymptotic fluctuations?
\end{question}

\begin{question}
    In \Cref{th: order of occ for transition regime} we gave a rough estimate for the first-order asymptotics of $\occ{\pi}{\tau_n}$, but what are the next terms in its expansion?
    Does a central limit theorem hold under appropriate assumptions?
\end{question}

\begin{question}
    Can we find an explicit formula for $e_{\pi,q}$, for general patterns $\pi$?
\end{question}

\begin{question}
    \Cref{th: Donsker for time-joint single pattern} uses a Mallows process $(\tau_{n,t})_{t\in[0,1)}$ whose definition is partly motivated by \Cref{th: continuous-time infinite Mallows process,th: CTI Mallows process as sum of blocks}.
    It would be interesting to relate it to the processes studied in \cite{C22}; it is clearly smooth, but is it regular?
    If not, could we prove an analog of \Cref{th: Donsker for time-joint single pattern} for regular Mallows processes?
\end{question}

\begin{question}
    In \Cref{th: Donsker for time-joint single pattern} and \Cref{prop: continuity}, we proved a finite-dimensional convergence on $[0,1)$ towards a Gaussian process $X_{\pi,\cdot}$ which is continuous on the open interval $(0,1)$.
    Does $X_{\pi,\cdot}$ admit a continuous version on the semi-closed interval $[0,1)$, and does the convergence hold in the Skorohod space?
    Note that when the limit process is continuous, this corresponds to uniform convergence on all compact subsets of $[0,1)$.
\end{question}

\section{Preliminary results on inversion counts}\label{sec: preliminaries}

In this section we collect a few classical facts about the left- and right-inversion counts of permutations, as these are essential for the study of Mallows distributions.

Let $\sigma\in\S_n$.
We define its left- and right-inversion counts as 
\[
\ell_i(\sigma) = \card{ j\in[n] \,:\, j<i ,\, \sigma(j) > \sigma(i) }
\quad;\quad
r_i(\sigma) = \card{ j\in[n] \,:\, j>i ,\, \sigma(j) < \sigma(i) }
\]
for $i\in[n]$.
Clearly, $\sum_{i=1}^n \ell_i(\sigma) = \sum_{i=1}^n r_i(\sigma) = \inv{\sigma}$.
The sequence $\left(\ell_i(\sigma)\right)_{1\le i\le n}$ is usually called the {\em inversion sequence} of $\sigma$; we avoid using this name here, since the sequence $\left(r_i(\sigma)\right)_{1\le i\le n}$ is just as important for our study.

The maps
\[
\varphi_n : \sigma\in\S_n \mapsto \left( \ell_i(\sigma) \right)_{1\le i\le n}
\quad;\quad
\psi_n : \sigma\in\S_n \mapsto \left( r_i(\sigma) \right)_{1\le i\le n}
\]
are both one-to-one, and their ranges are given by:

\begin{lemma}\label{lem: ranges of phi_n and psi_n}
    Let $(\ell_i)_{1\le i\le n}$ and $(r_i)_{1\le i\le n}$ be sequences of nonnegative integers. Then:
    \[
    (\ell_i)_{1\le i\le n} \in \varphi_n\left( \S_n \right) 
    \,\Leftrightarrow\,
    \forall i,\, \ell_i \in \{0,1,\dots,i\m 1\}
    \quad;\quad
    (r_i)_{1\le i\le n} \in \psi_n\left( \S_n \right) 
    \,\Leftrightarrow\,
    \forall i,\, r_i \in \{0,1,\dots,n\m i\} .
    \]
\end{lemma}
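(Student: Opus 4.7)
The necessary direction of both equivalences is immediate from the definitions. Indeed, $\ell_i(\sigma)$ counts the elements of $\{j\in[n] : j<i\}$, a set of size $i-1$, so $\ell_i(\sigma) \in \{0,\dots,i-1\}$; likewise $r_i(\sigma) \le n-i$ since it counts a subset of $\{j\in[n] : j>i\}$. Thus $\varphi_n$ and $\psi_n$ both map $\S_n$ into the target sets described in the statement. It remains to show the converse inclusions, i.e.\ that every admissible sequence arises from some permutation.

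For sufficiency, my plan is to exhibit explicit inverses; the lemma then follows at once. For $\psi_n$, given $(r_i)_{1\le i\le n}$ with $r_i\in\{0,\dots,n-i\}$, I would process positions $i=1,2,\dots,n$ from left to right, maintaining a sorted list $V$ of values not yet used (initially $V=(1,\dots,n)$). At step $i$, set $\sigma(i)$ to be the $(r_i+1)$-th smallest entry of $V$ and then remove it. The bound $r_i \le n-i$ makes this well-defined, since $|V|=n-i+1$ at the start of step $i$. After step $i$, the list $V$ is exactly $\{\sigma(i+1),\dots,\sigma(n)\}$, and precisely $r_i$ of those values lie below $\sigma(i)$, which verifies $r_i(\sigma)=r_i$ for the constructed $\sigma$.

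For $\varphi_n$ the natural construction runs in the opposite direction: process $i=n,n-1,\dots,1$ from right to left, starting again with $V=(1,\dots,n)$, and at step $i$ set $\sigma(i)$ to be the $(i-\ell_i)$-th smallest entry of $V$ before removing it. The bound $\ell_i \le i-1$ ensures $1 \le i-\ell_i \le |V|=i$ throughout. At the start of step $i$ the list $V$ is exactly $\{\sigma(1),\dots,\sigma(i)\}$, so $\sigma(i)$ has rank $i-\ell_i$ in this set, meaning exactly $\ell_i$ of $\sigma(1),\dots,\sigma(i-1)$ exceed $\sigma(i)$, as required. There is no serious obstacle in this argument; the only subtlety worth flagging is that the two constructions must run in opposite directions, mirroring the duality between left- and right-inversion counts. (Alternatively, one could first verify injectivity of $\varphi_n$ and $\psi_n$ and then invoke the cardinality identity $\prod_{i=1}^n i = n! = |\S_n|$, but the explicit inverses above seem more informative and will be useful later in the paper.)
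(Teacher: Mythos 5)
Your proof is correct and matches the paper's treatment: the paper states this as a classical fact and justifies it via exactly such inverse insertion procedures, your left-to-right construction for $\psi_n$ being identical to the paper's, and your right-to-left construction for $\varphi_n$ being the variant the paper explicitly mentions in place of its left-to-right pattern-building procedure (which inserts $k+1-\ell_{k+1}$ and relabels the earlier values). Both the necessity argument and the verification that the constructed permutation has the prescribed inversion counts are sound.
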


\noindent The respective preimages of $\varphi_n$ and $\psi_n$ can be constructed from left to right by two insertion procedures:
\begin{itemize}
    \item If $(\ell_i)_{1\le i\le n} = \varphi_n(\sigma)$, we can recover $\sigma$ by successively constructing $\sigma^{(k)} = \pat{[k]}{\sigma}$ for $k=1,2,\dots,n$.
    First, set $\sigma^{(1)} = 1$.
    Then, if $\sigma^{(k)}\in\S_k$ is constructed for some $k\in[n\m1]$, define $\sigma^{(k+1)}\in\S_{k+1}$ by setting $\sigma^{(k+1)}(k+1) = k + 1 - \ell_{k+1}$ and $\sigma^{(k+1)}(i) = \sigma^{(k)}(i) + \One{\sigma^{(k)}(i) \ge k+1-\ell_{k+1}}$ for each $i\in[k]$.
    It can be checked that for each $k$, $\sigma^{(k)}$ is the unique permutation with left-inversion counts $(\ell_i)_{1\le i\le k}$, and in particular $\sigma^{(n)} = \sigma$.
    \item If $(r_i)_{1\le i\le n} = \psi_n(\sigma)$, we can recover $\sigma$ by successively constructing $\sigma(k)$ for $k=1,2,\dots,n$.
    First, set $\sigma(1) = r_1\p1$.
    Then, if $\sigma(1),\dots,\sigma(k)$ are constructed for some $k\in[n-1]$, define $\sigma(k+1)$ as the $(r_{k+1} \p 1)$st lowest number in the set $[n] \setminus \left\{ \sigma(1),\dots,\sigma(k) \right\}$.
\end{itemize}
Note that we could opt for right-to-left constructions by exchanging the two algorithms, since the sequence $\varphi_n\left( \sigma^\rev \right)$ is obtained by reversing the sequence $\psi_n(\sigma)$.

These notions can be extended to infinite permutations, i.e.~bijections of $\N^*$.
We define the left- and right-inversion counts of $\Sigma\in\S_\infty$ as 
\[
\ell_i(\Sigma) = \card{ j\in\N^* \,:\, j<i ,\, \Sigma(j) > \Sigma(i) }
\quad;\quad
r_i(\Sigma) = \card{ j\in\N^* \,:\, j>i ,\, \Sigma(j) < \Sigma(i) }
\]
for $i\in\N^*$.
The maps
\[
\varphi_\infty : \Sigma\in\S_\infty \mapsto \left( \ell_i(\Sigma) \right)_{i\ge1}
\quad;\quad
\psi_\infty : \Sigma\in\S_\infty \mapsto \left( r_i(\Sigma) \right)_{i\ge1}
\]
are again one-to-one, but their ranges are rather different from the finite case.
Let us focus on $\psi_\infty$:

\begin{lemma}\label{lem: range of psi_infty}
    Let $(r_i)_{i\ge1}$ be a sequence of nonnegative integers. Then:
    \[
    (r_i)_{i\ge1} \in \psi_\infty\left( \S_\infty \right) 
    \,\Longleftrightarrow\,
    r_i=0 \text{ for infinitely many $i$'s}.
    \]
\end{lemma}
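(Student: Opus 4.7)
The plan is to prove each implication separately, following the spirit of the finite case while handling the subtleties that arise when passing to infinite permutations.

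For the direct implication, suppose $(r_i)_{i\ge1} = \psi_\infty(\Sigma)$ for some $\Sigma \in \S_\infty$. The idea is to locate infinitely many positions where $\Sigma$ takes a running minimum. Specifically, for each $k \ge 1$ define $i_k := \max \Sigma^{-1}(\{1,\dots,k\})$, which is finite since $\Sigma$ is bijective. By construction no $j > i_k$ satisfies $\Sigma(j) \le k$, and in particular none satisfies $\Sigma(j) < \Sigma(i_k)$; hence $r_{i_k}(\Sigma) = 0$. Since $|\Sigma^{-1}(\{1,\dots,k\})| = k$, one has $i_k \ge k$, so the sequence $(i_k)_{k \ge 1}$ is unbounded and produces infinitely many indices where the right-inversion count vanishes.

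For the reverse implication, I would extend the right-to-left insertion procedure recalled above to an infinite one. Given $(r_i)_{i \ge 1}$, define inductively $\Sigma(1) := r_1\p1$ and, for $k \ge 1$, let $\Sigma(k\p1)$ be the $(r_{k+1}\p1)$-st smallest element of $\N^* \setminus \{\Sigma(1),\dots,\Sigma(k)\}$. This yields an injective map $\Sigma : \N^* \to \N^*$ automatically; the main task is then to show that $\Sigma$ is surjective \emph{iff} $r_i = 0$ for infinitely many $i$, and that in this case its right-inversion counts match the prescribed $(r_i)$.

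The main obstacle is the surjectivity argument. Let $T_k := \{\Sigma(1),\dots,\Sigma(k)\}$ and $m_k := \min(\N^* \setminus T_k)$. This minimum is non-decreasing in $k$, and the key observation is that $m_k$ strictly increases precisely at the steps with $r_{k+1} = 0$: in that case $\Sigma(k\p1) = m_k$, so $m_{k+1} > m_k$. Hence $m_k \to \infty$ if and only if $r_i = 0$ holds for infinitely many $i$, which is equivalent to every positive integer being eventually hit by $\Sigma$. The toy example $r_i \equiv 1$, for which the construction outputs $\Sigma = 2\,3\,4\dots$ and misses the value $1$, illustrates why the condition is necessary. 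Once surjectivity is established, checking $\psi_\infty(\Sigma) = (r_i)$ is routine: by construction there are exactly $r_k$ integers $m < \Sigma(k)$ lying outside $T_k$, and by surjectivity these are precisely the values $\Sigma(j)$ with $j > k$ and $\Sigma(j) < \Sigma(k)$, so this count equals $r_k(\Sigma)$.
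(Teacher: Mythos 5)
Your proof is correct: the forward direction via the positions $i_k=\max\Sigma^{-1}(\{1,\dots,k\})$ and the backward direction via the infinite insertion procedure (with the running minimum $m_k$ of the unused values tracking surjectivity) are both sound, including the final verification that the constructed $\Sigma$ has the prescribed right-inversion counts. The paper states this lemma without proof, merely describing the inverse insertion procedure right after it, so your argument is essentially a careful formalization of the approach the paper sketches.
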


\noindent The map $\psi_\infty$ can be inverted by an infinite analog of the previous left-to-right insertion procedure, namely:
for each $k\in\N^*$, $\Sigma(k)$ is the $\left( r_k(\Sigma)\p1 \right)$st lowest number in the set $\N^* \setminus \left\{ \Sigma(1),\dots,\Sigma(k\m1) \right\}$.
This procedure is the only one we will use in this paper;
the infinite case (on $\N^*$) introduces an asymmetry between $\varphi_\infty$ and $\psi_\infty$, and the range of the former is less explicit (see \cite[Remark~3.10]{AK24} for a related discussion).

\medskip

Left- and right-inversion counts are fundamental tools in the study of Mallows random permutations.
Indeed, for $k\in\N^*$ and $q\in[0,1)$, define the truncated geometric distribution $\TGeom{k}{1\m q}$ on $[k]$ by giving mass $\frac{(1-q)q^{j-1}}{1-q^k}$ to each integer $j\in[k]$.
Equivalently, the $\TGeom{k}{1\m q}$ distribution is the $\Geom{1\m q}$ distribution conditioned to the set $[k]$.
Then:

\begin{lemma}[\cite{M57}]\label{lem: left- and right-inversion numbers of finite Mallows}
    Let $\tau_n \sim \Mallows{n}{q}$.
    The random variables $L_i := \ell_i(\tau_n)+1$ are independent for $i\in[n]$ and have distribution $\TGeom{i}{1\m q}$.
    Similarly, the random variables $R_i := r_i(\tau_n)+1$ are independent for $i\in[n]$ and have distribution $\TGeom{n-i+1}{1\m q}$.
\end{lemma}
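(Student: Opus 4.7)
The plan is to transfer the Mallows distribution from $\S_n$ to the space of inversion sequences via the bijection $\varphi_n$, observe that the weight $q^{\inv{\sigma}}$ factorizes over coordinates, and then handle the right-inversion statement via a symmetry that swaps left- and right-inversions.

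First, I would use \Cref{lem: ranges of phi_n and psi_n}: the map $\varphi_n$ is a bijection between $\S_n$ and the product set $\prod_{i=1}^n \{0,1,\dots,i\m1\}$. Since $\inv{\sigma} = \sum_{i=1}^n \ell_i(\sigma)$ for every $\sigma$, pushing the Mallows law forward through $\varphi_n$ yields
\[
\prob{(\ell_i(\tau_n))_{i=1}^n = (a_i)_{i=1}^n} = \frac{1}{Z_{n,q}} \prod_{i=1}^n q^{a_i}
\]
for any $(a_i) \in \prod_{i=1}^n \{0,\dots,i\m1\}$. Summing over all such sequences computes the partition function in product form: $\sum_\sigma q^{\inv{\sigma}} = \prod_{i=1}^n (1+q+\dots+q^{i-1}) = \prod_{i=1}^n \frac{1-q^i}{1-q} = Z_{n,q}$, confirming the normalization and, more importantly, showing that the joint law of $(\ell_i(\tau_n))_i$ factorizes as
\[
\prod_{i=1}^n \frac{(1-q)\, q^{a_i}}{1-q^i}.
\]
The product structure gives independence, while the $i$-th factor is exactly the mass that $\TGeom{i}{1\m q}$ puts on $a_i + 1$, so setting $L_i := \ell_i(\tau_n)+1$ gives independent variables with $L_i \sim \TGeom{i}{1\m q}$.

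For the right-inversion counts, rather than repeating the argument with $\psi_n$, I would invoke the involution $\sigma \mapsto \widetilde\sigma$ on $\S_n$ defined by $\widetilde\sigma(i) := n\p1 - \sigma(n\p1\m i)$. A direct unwinding of the definitions shows that $\ell_i(\widetilde\sigma) = r_{n+1-i}(\sigma)$, and consequently $\inv{\widetilde\sigma} = \inv{\sigma}$. Hence $\widetilde\tau_n$ is again $\Mallows{n}{q}$-distributed, so applying the first part to $\widetilde\tau_n$ implies that $(r_n(\tau_n)+1, r_{n-1}(\tau_n)+1, \dots, r_1(\tau_n)+1)$ has the same joint law as $(L_1,\dots,L_n)$. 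Reading off coordinate $n\p1\m i$ gives the desired conclusion: $R_i := r_i(\tau_n)+1$ are independent with $R_i \sim \TGeom{n-i+1}{1\m q}$.

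There is no genuine obstacle here; the whole proof is essentially the factorization $q^{\inv{\sigma}} = \prod_i q^{\ell_i(\sigma)}$ combined with the fact that $\varphi_n$ identifies $\S_n$ with a product space. The only points requiring bookkeeping are the description of the range of $\varphi_n$, supplied by \Cref{lem: ranges of phi_n and psi_n}, and the identity $\ell_i(\widetilde\sigma) = r_{n+1-i}(\sigma)$ used to deduce the right-inversion statement by symmetry.
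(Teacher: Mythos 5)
Your proof is correct. Note that the paper does not prove this lemma at all: it is stated with a citation to \cite{M57} as a classical fact, so there is no in-paper argument to compare against. Your derivation is the standard one and is complete: the bijection $\varphi_n$ onto $\prod_{i=1}^n\{0,\dots,i\m1\}$ together with $\inv{\sigma}=\sum_i \ell_i(\sigma)$ makes the Mallows weight factorize, each factor $\frac{(1-q)q^{a_i}}{1-q^i}$ is exactly the $\TGeom{i}{1\m q}$ mass at $a_i\p1$ as defined in the paper, and the reverse-complement involution $\widetilde\sigma(i)=n\p1-\sigma(n\p1\m i)$ does satisfy $\ell_i(\widetilde\sigma)=r_{n+1-i}(\sigma)$, preserves $\inv{\cdot}$ and hence the Mallows law, which transfers the left-inversion statement to the right-inversion one with the correct marginals $\TGeom{n-i+1}{1\m q}$.
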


\noindent As is usual in probability theory, describing a random variable as the image of independent r.v.'s through a simple function proves to be very useful.
The infinite Mallows distribution is defined in an analogous way; see \Cref{sec: infinite Mallows q fixed}.

We end this section with a simple but handy description of decomposability via right-inversion counts.

\begin{lemma}\label{lem: CNS right-inversion decomposable finite}
    Let $\sigma\in\S_n$ with right-inversion counts $(r_i)_{1\le i\le n}$, and let $M\in[n]$.
    We can write $\sigma = \rho \oplus \sigma'$ for some $\rho\in\S_M$ and $\sigma'\in\S_{n-M}$ if and only if:
    \begin{equation}\label{eq: CNS right-inversion decomposable finite}
        \text{for all } i\in[M],\quad
        r_i \le M-i .
    \end{equation}
\end{lemma}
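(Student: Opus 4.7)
The plan is to prove the two implications separately, exploiting the left-to-right insertion procedure recalled in the excerpt.

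For the forward direction, suppose $\sigma=\rho\oplus\sigma'$ with $\rho\in\S_M$. Then $\sigma(i)\le M$ for every $i\in[M]$, and $\sigma(j)>M$ for every $j>M$. Fix $i\in[M]$ and consider any $j>i$ contributing to $r_i$, i.e.\ with $\sigma(j)<\sigma(i)\le M$. Such a $j$ cannot lie in $\{M+1,\dots,n\}$, because there $\sigma(j)>M\ge\sigma(i)$. Hence $j\in\{i+1,\dots,M\}$, which gives at most $M-i$ possibilities and proves $r_i\le M-i$.

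For the converse, I will use the infinite-free version of the $\psi_n$-inversion algorithm: $\sigma(k)$ is the $(r_k+1)$st lowest element of $[n]\setminus\{\sigma(1),\dots,\sigma(k-1)\}$. I claim that under \eqref{eq: CNS right-inversion decomposable finite}, for every $k\in[M]$ we have $\sigma(k)\in[M]$. I will prove this by induction on $k$. The base case $k=1$ is immediate since $\sigma(1)=r_1+1\le M$. For the inductive step, assume $\{\sigma(1),\dots,\sigma(k-1)\}\subseteq[M]$; then the remaining set $[n]\setminus\{\sigma(1),\dots,\sigma(k-1)\}$ contains exactly $M-k+1$ elements of $[M]$ (the smallest ones in this set). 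Because $r_k+1\le M-k+1$ by hypothesis, the $(r_k+1)$st lowest element lies in $[M]$, which completes the induction. Since $\sigma$ is a bijection and $\sigma([M])\subseteq[M]$ with $|\sigma([M])|=M$, we get $\sigma([M])=[M]$, which is exactly the statement that $\sigma=\rho\oplus\sigma'$ for some $\rho\in\S_M$ and $\sigma'\in\S_{n-M}$.

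The argument is essentially routine once the insertion procedure is invoked, so I do not expect any real obstacle; the only point needing slight care is to verify the counting identity that, after $k-1$ insertions inside $[M]$, the complement still has $M-k+1$ elements of $[M]$, which is immediate from $|\{\sigma(1),\dots,\sigma(k-1)\}\cap[M]|=k-1$.
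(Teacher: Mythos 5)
Your proof is correct and follows essentially the same route as the paper: the forward direction by observing that no index $j>M$ can contribute to $r_i$ for $i\le M$, and the converse by running the left-to-right insertion procedure built from $(r_i)$ and checking that the first $M$ inserted values stay in $[M]$. The only cosmetic difference is that the paper introduces an auxiliary $\rho\in\S_M$ via \Cref{lem: ranges of phi_n and psi_n} and asserts the first $M$ insertion steps coincide, whereas your induction spells out the counting argument directly on $\sigma$, which makes that step explicit.
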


\begin{proof}
    First suppose that $\sigma = \rho\oplus \sigma'$.
    By definition of the direct sum, no couple $(i,j)$ with $i\le M$ and $j>M$ can create an inversion for $\sigma$.
    Thus $r_i(\sigma) = r_i(\rho)$ for all $i\in[M]$, and \Cref{eq: CNS right-inversion decomposable finite} follows.

    Reciprocally, suppose that $\eqref{eq: CNS right-inversion decomposable finite}$ holds.
    By \Cref{lem: ranges of phi_n and psi_n}, there exists $\rho\in\S_M$ such that $r_i(\rho) = r_i(\sigma)$ for all $i\in[M]$.
    Recall that the insertion procedure constructs successively $\sigma(1), \dots, \sigma(n)$ using $r_1, \dots, r_n$.
    The first $M$ steps of the procedure are the same for $\sigma$ as they are for $\rho$, thus $\sigma(i) = \rho(i)$ for all $i\in[M]$ as desired.
\end{proof}

In the same way, an analogous lemma for the infinite case readily follows:

\begin{lemma}\label{lem: CNS right-inversion decomposable infinite}
    Let $\Sigma \in\S_\infty$ with right-inversion counts $(r_i)_{i\ge 1}$.  
    We can write $\Sigma = \rho_1\oplus \dots\oplus \rho_k\oplus \dots$ for some infinite sequence of finite permutations $\rho_k \in \S_{M_k}$, $k\ge1$, if and only if $(r_i)_{i\ge 1}$ is an infinite concatenation of sequences satisfying \eqref{eq: CNS right-inversion decomposable finite}, that is:
    \begin{equation*}
        \text{for all } k\in\N^* \text{ and } i\in[M_k], \quad
        r_{M_1+ \dots+ M_{k\m1}+ i} \le M_k - i .
    \end{equation*}
\end{lemma}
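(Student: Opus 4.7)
The plan is to iterate the finite-case statement (\Cref{lem: CNS right-inversion decomposable finite}). Throughout, set $N_0 := 0$ and $N_k := M_1 + \dots + M_k$ for $k\ge1$.

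For the forward direction, I would argue that the right-inversion counts of $\Sigma$ factor along the blocks. Assume $\Sigma = \rho_1 \oplus \rho_2 \oplus \dots$, and fix $k\ge1$ and $i\in[M_k]$. By the direct-sum structure, $\Sigma(N_{k-1}+i) \in (N_{k-1}, N_k]$; for any $j > N_k$ we have $\Sigma(j) > N_k \ge \Sigma(N_{k-1}+i)$, while any $j\le N_{k-1}$ satisfies $j < N_{k-1}+i$. Hence the only indices contributing to $r_{N_{k-1}+i}(\Sigma)$ lie in $(N_{k-1}+i, N_k]$, giving $r_{N_{k-1}+i}(\Sigma) = r_i(\rho_k)$. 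The bound $r_i(\rho_k) \le M_k - i$ is then immediate from \Cref{lem: ranges of phi_n and psi_n} applied to $\rho_k$.

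For the reverse direction, I would reconstruct $\Sigma$ from its right-inversion sequence via the infinite insertion procedure described after \Cref{lem: range of psi_infty}, and verify that it lands in each prescribed block. Suppose $(r_i)_{i\ge 1}$ is such a concatenation. The condition at the last position of the $k$-th block gives $r_{N_k} \le 0$, hence $r_{N_k}=0$; as this holds for all $k$, the sequence contains infinitely many zeros and so belongs to $\psi_\infty(\S_\infty)$ by \Cref{lem: range of psi_infty}. Let $\Sigma$ be its unique preimage. It remains to show $\{\Sigma(1),\dots,\Sigma(N_k)\} = [N_k]$ for every $k$, since this immediately yields the desired decomposition $\Sigma = \rho_1\oplus\rho_2\oplus\dots$ with $\rho_k(i) := \Sigma(N_{k-1}+i) - N_{k-1}$ for $i\in[M_k]$. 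I would prove this by an outer induction on $k$ together with an inner induction on $i\in[M_k]$. Granting both hypotheses, at step $N_{k-1}+i$ of the insertion procedure the set of available values decomposes as
\[
\N^* \setminus \bigl\{\Sigma(1),\dots,\Sigma(N_{k-1}+i-1)\bigr\} = A_i \,\cup\, (N_k, \infty),
\]
where $A_i := (N_{k-1}, N_k] \setminus \{\Sigma(N_{k-1}+1),\dots,\Sigma(N_{k-1}+i-1)\}$ has exactly $M_k - i + 1$ elements. The $(r_{N_{k-1}+i}+1)$st smallest element of this set lies in $A_i$ iff $r_{N_{k-1}+i}+1 \le M_k - i + 1$, which is precisely the assumed inequality \eqref{eq: CNS right-inversion decomposable finite} for the $k$-th block.

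The only mildly delicate point is the nested induction above: one must check at every sub-step that the insertion procedure does not escape the current block of size $M_k$, and the assumed inequalities are exactly the bookkeeping needed to guarantee this. Everything else is a routine transcription of the finite case.
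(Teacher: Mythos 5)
Your proof is correct and follows essentially the route the paper intends: the paper states that the infinite lemma ``readily follows'' by the same insertion-procedure argument as \Cref{lem: CNS right-inversion decomposable finite}, and your nested induction is precisely a careful transcription of that argument to the infinite setting (the detour through ``let $\Sigma$ be the unique preimage'' is harmless, since $\psi_\infty$ is injective and the given $\Sigma$ is that preimage). Nothing is missing.
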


\section{The almost uniform regime: coupling and stability}\label{sec: proofs almost uniform}

This section is devoted to the proofs of \Cref{lem: coupling Mallows with uniform} and \Cref{th: stability close to uniform}.
We start with two useful lemmas.

\begin{lemma}\label{lem: punctual dTV Tgeom with Unif}
    Let $0\le q\le 1$ and $m\in\N^*$ such that $m(1-q)\le1$.
    Let $X_m \sim \TGeom{m}{1-q}$ and $Y_m \sim \Unif{[m]}$.
    Then for any $j\in[m]$:
    \begin{equation*}
        \abs{\big. \prob{X_m=j} - \prob{Y_m=j} } = \abs{ \prob{X_m=j} - \frac1m } \le 3(1-q) .
    \end{equation*}
\end{lemma}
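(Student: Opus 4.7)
The plan is to evaluate the difference in closed form, bound the numerator by a telescoping/Lipschitz argument on $q \mapsto q^a$, and lower-bound the denominator using the hypothesis $m(1-q)\le 1$.

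First, I would recall that $\prob{X_m = j} = \frac{(1-q)q^{j-1}}{1-q^m}$, and use the geometric identity $1-q^m = (1-q)\sum_{k=0}^{m-1}q^k$. Putting the two terms over a common denominator gives
\[
\prob{X_m=j} - \frac{1}{m}
= \frac{(1-q)q^{j-1}}{1-q^m} - \frac{1}{m}
= \frac{m q^{j-1} - \sum_{k=0}^{m-1} q^k}{m \sum_{k=0}^{m-1} q^k}
= \frac{\sum_{k=0}^{m-1}\bigl(q^{j-1} - q^k\bigr)}{m \sum_{k=0}^{m-1} q^k}.
\]

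Next, I would bound the numerator. For any nonnegative integers $a,b$ with $q \in (0,1)$, the elementary inequality $|q^a - q^b| = q^{\min(a,b)}\bigl(1 - q^{|a-b|}\bigr) \le |a-b|(1-q)$ (obtained from $1-q^n \le n(1-q)$ and $q^{\min(a,b)}\le 1$) applies. Summing over $k \in \{0,\dots,m-1\}$ with $a = j-1$ and $b=k$ gives
\[
\left| \sum_{k=0}^{m-1}\bigl(q^{j-1} - q^k\bigr) \right|
\le (1-q) \sum_{k=0}^{m-1} |k - (j-1)|
\le (1-q)\, \frac{m(m-1)}{2}.
\]

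For the denominator I would use the assumption $m(1-q)\le 1$. Writing $\epsilon := 1-q$, the inclusion-exclusion expansion of $1-(1-\epsilon)^m$ can be truncated at second order because the terms $\binom{m}{k}\epsilon^k$ are decreasing in $k$ under $m\epsilon\le 1$, yielding
\[
1-q^m \ge m\epsilon - \binom{m}{2}\epsilon^2 = m\epsilon\left(1 - \tfrac{m-1}{2}\epsilon\right) \ge \tfrac{1}{2} m\epsilon,
\]
since $(m-1)\epsilon \le 1$. Consequently $\sum_{k=0}^{m-1} q^k = (1-q^m)/(1-q) \ge m/2$, so $m \sum_{k=0}^{m-1} q^k \ge m^2/2$.

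Combining the two estimates,
\[
\left| \prob{X_m=j} - \frac{1}{m} \right|
\le \frac{(1-q)\, m(m-1)/2}{m^2/2}
= \frac{m-1}{m}(1-q) \le (1-q) \le 3(1-q),
\]
which gives the desired bound (with room to spare). The only mildly delicate step is the lower bound on $1-q^m$: a crude application of Bernoulli's inequality $1-q^m \ge 1 - (1-m\epsilon) = m\epsilon$ fails because it requires $m\epsilon \le 1$ strictly and degenerates at the boundary, so one has to go one order further in the binomial expansion — this is the only place the hypothesis $m(1-q)\le 1$ is used in an essential way.
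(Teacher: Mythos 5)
Your proof is correct, and it takes a route that differs from the paper's in a useful way. The paper expands $q^{j-1}$ and $q^m$ to second order around $q=1$ (via $\abs{(1-x)^j-1}\le jx$ and $\abs{(1-x)^j-(1-jx)}\le \tfrac12 j^2x^2$, following the computation cited from the proof of Lemma~7.2 in \cite{MSV23}) and then bounds a numerator of size $\tfrac32 m^2(1-q)^2$ against a denominator $m(1-q^m)\ge \tfrac12 m^2(1-q)$, which is where the constant $3$ comes from. You instead factor $1-q^m=(1-q)\sum_{k=0}^{m-1}q^k$, rewrite the difference as $\sum_{k=0}^{m-1}(q^{j-1}-q^k)\big/\big(m\sum_{k=0}^{m-1}q^k\big)$, and bound each term by the first-order Lipschitz estimate $\abs{q^{j-1}-q^k}\le\abs{j-1-k}(1-q)$; the hypothesis $m(1-q)\le1$ enters, exactly as in the paper, only through the denominator bound $\sum_{k=0}^{m-1}q^k\ge m/2$. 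This buys you a cleaner computation (no $\O_\delta$ bookkeeping), a sharper constant $\tfrac{m-1}{m}(1-q)\le 1-q$ in place of $3(1-q)$, and a formula that remains meaningful at $q=1$. Two small remarks: your closing aside about Bernoulli's inequality is slightly off, since $(1-\epsilon)^m\ge 1-m\epsilon$ gives an \emph{upper} bound on $1-q^m$ rather than a degenerate lower bound, so it is inapplicable here for direction reasons, not boundary reasons; and the second-order truncation $1-q^m\ge m\epsilon-\binom{m}{2}\epsilon^2$ that you do use is justified as you say (alternating terms non-increasing under $m\epsilon\le1$), or even more simply via $q^k\ge 1-k(1-q)$ summed over $k$.
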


\begin{proof}
    This can be found in the proof of \cite[Lemma~7.2]{MSV23}, but we include the computation here for completeness.
    Recall that for any $x\in[0,1]$ and $j\in\N$, we have $\abs{(1-x)^j - 1} \le jx$ and $\abs{(1-x)^j - (1-jx)} \le \tfrac12 j^2x^2$.
    Therefore we can write $q^j = 1 + \Landau{\O_1}{}{j(1-q)}$ and $q^j = 1 -j(1-q) + \Landau{\O_{1/2}}{}{j^2(1-q)^2}$ where the notation $\O_\delta(t)$ means that $\abs{\O_\delta(t)} \le \delta t$.
    Hence, for all $j\in[m]$:
    \begin{align*}
        \abs{ \prob{X_m=j} - \prob{Y_m=j} }
        &= \abs{ \frac{(1-q)q^{j-1}}{1-q^m} - \frac1m } \\
        &= \frac{\abs{ m(1-q)q^{j-1} - (1-q^m) }}{m(1-q^m)} \\
        &\le \frac{\abs{ m(1\m q)\left( 1 + \Landau{\O_1}{}{j(1\m q)} \right) - m(1\m q) + \Landau{\O_{1/2}}{}{m^2(1\m q)^2} }}{m\left( m(1\m q) - \tfrac12 m^2(1\m q)^2 \right)} \\
        &\le \frac{ \Landau{\O_{3/2}}{}{m^2(1-q)^2} }{ m^2(1-q) - \tfrac12 m^3(1-q)^2 } \\
        &\le \frac{ \tfrac32 (1-q) }{ 1 - \tfrac12 m(1-q) } \\
        &\le 3 (1-q) .\qedhere
    \end{align*}
\end{proof}

In the following lemma, we write $\dtv{\cdot}{\cdot}$ for the total variation distance, and $\phi^*\cL$ for the pushforward of a probability distribution $\cL$ by a measurable function $\phi$.

\begin{lemma}\label{lem: coupling dtv pushforward}
    Let $\cL_1,\cL_2$ be probability distributions on $\R$, and $\phi_1,\phi_2 : \R \to \N$ be measurable functions.
    Let $\epsilon>0$ and suppose that $\dtv{\phi_1^*\cL_1}{\phi_2^*\cL_2} \le \epsilon$.
    Then there exists a coupling $(X_1,X_2)$ of $\cL_1$ and $\cL_2$ such that
    \begin{equation*}
        \prob{ \phi_1(X_1) \ne \phi_2(X_2) } \le \epsilon .
    \end{equation*}
\end{lemma}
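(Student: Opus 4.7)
The plan is to reduce to the classical maximal coupling of discrete measures, then lift back via regular conditional distributions.

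First, I would apply the maximal coupling theorem to the pair of $\N$-valued distributions $\phi_1^*\cL_1$ and $\phi_2^*\cL_2$: there exists a coupling $(Y_1,Y_2)$ of these two distributions such that
\[
\prob{Y_1 \neq Y_2} \;=\; \dtv{\phi_1^*\cL_1}{\phi_2^*\cL_2} \;\le\; \epsilon.
\]
Next, since $\R$ is a standard Borel space, for each $k \in \N$ the conditional distribution $\cL_i(\,\cdot \mid \phi_i = k)$ is well-defined as a regular conditional probability on $\R$ (whenever $\phi_i^*\cL_i(\{k\}) > 0$); call these $\mu_{i,k}$. By the disintegration formula, one has
\[
\cL_i \;=\; \sum_{k\in\N} \phi_i^*\cL_i(\{k\}) \,\mu_{i,k}, \qquad i=1,2.
\]

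Now I construct the desired coupling $(X_1,X_2)$ as follows. Sample $(Y_1,Y_2)$ from the maximal coupling above, and conditionally on $(Y_1,Y_2)=(k_1,k_2)$, sample $X_1$ and $X_2$ independently from $\mu_{1,k_1}$ and $\mu_{2,k_2}$ respectively. By disintegration, the marginal law of $X_i$ is $\cL_i$ for each $i$, so $(X_1,X_2)$ is indeed a coupling of $\cL_1$ and $\cL_2$. Moreover, by construction $\phi_i(X_i) = Y_i$ almost surely, since $\mu_{i,k}$ is supported on $\phi_i^{-1}(\{k\})$. Therefore
\[
\prob{\phi_1(X_1) \ne \phi_2(X_2)} \;=\; \prob{Y_1 \ne Y_2} \;\le\; \epsilon,
\]
as required.

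The proof is essentially routine; the only point that demands a little care is the use of regular conditional probabilities on $\R$, which exists because $\R$ is Polish (hence standard Borel). No other obstacle is expected.
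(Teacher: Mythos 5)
Your proposal is correct and follows essentially the same route as the paper: take a maximal (or at least $\epsilon$-good) coupling of the pushforwards $\phi_1^*\cL_1$ and $\phi_2^*\cL_2$, then resample $X_i$ from the conditional law of $\cL_i$ on the fiber $\phi_i^{-1}(\{Y_i\})$, so that $\phi_i(X_i)=Y_i$ almost surely. The only cosmetic difference is that you spell out the regular-conditional-probability justification, which the paper leaves implicit.
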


\begin{proof}
    It is well-known that there exists a coupling $(Y_1,Y_2)$ of $\phi_1^*\cL_1$ and $\phi_2^*\cL_2$ such that 
    \begin{equation*}
        \prob{ Y_1 \ne Y_2 } \le \epsilon .
    \end{equation*}
    Let $i\in\{1,2\}$.
    For each $y$ in the support of $\phi_i^*\cL_i$, define $\cL_{i,y}$ as the probability distribution induced by $\cL_i$ on the fiber $\phi_i^{-1}(\{y\})$.
    Then let $X_i$ have distribution $\cL_{i,Y_i}$, conditionally given $Y_i$.
    It is readily verified that $X_i$ is a.s.~well defined, and that its law is $\cL_i$.
    Furthermore $\phi_i(X_i) = Y_i$ a.s., whence the desired inequality.
\end{proof}

\begin{proof}[Proof of \Cref{lem: coupling Mallows with uniform}]
    In this proof, we opt for the construction of $\tau_n \sim \Mallows{n}{q}$ via its {left-inversions}: let $L_1,\dots,L_n$ be independent r.v.'s such that for each $k\in[n]$, $L_k \sim \TGeom{k}{1\m q_n}$.
    We define, recursively for $1\le k\le n$, $\tau_n^k$ as the only permutation of $[k]$ such that $\tau_n^k(k) = k-L_k+1$ and $\pat{[k\m1]}{\tau_n^k} = \tau_n^{k-1}$.
    Then $\tau_n := \tau_n^n \sim \Mallows{n}{q}$ by \Cref{lem: left- and right-inversion numbers of finite Mallows}, and $\pat{[k]}{\tau_n} = \tau_n^k$ for all $k$.
    Similarly, we can construct $u_n \sim \Unif{\S_n}$ using independent r.v.'s $U_k \sim \Unif{[k]}$ for $k\in[n]$.
    
    Our goal is to couple the $L_k$'s with the $U_k$'s in such a way that some large pattern of $\tau_n$ coincides with some large pattern of $u_n$.
    More precisely, recursively for $1\le k\le n$, we shall construct a well-chosen coupling of $L_k$ and $U_k$ conditionally given the $\sigma$-algebra $\cF_{k-1} := \sigma\left( L_1,U_1,\dots,L_{k-1},U_{k-1} \right)$.
    Then the marginal laws of $L_k$ and $U_k$, conditionally given $\cF_{k-1}$, will be $\TGeom{k}{1\m q_n}$ and $\Unif{[k]}$, thus the associated permutations $\tau_n$ and $u_n$ will indeed have distributions $\Mallows{n}{q_n}$ and $\Unif{\S_n}$.
    In parallel to this, we shall recursively construct an $\cF_k$-measurable set $\cH_k \subset [k]$ of \enquote{errors}, satisfying $\pat{[k]\setminus\cH_k}{\tau_n^k} = \pat{[k]\setminus\cH_k}{u_n^k}$.

    First, set $L_1 = U_1 = 1$ a.s.~and $\cH_1=\emptyset$.
    Then, assume that we have constructed $L_1,U_1,\dots,L_k,U_k$ and $\cH_k$ as desired, for some $k<n$.
    If $\sigma\in\S_k$, define $\phi_\sigma : x\in[k+1] \mapsto x + \sum_{h\in\cH_k} \One{x < k-\sigma(h)+2}$ (this definition is motivated later in the proof).
    The function $\phi_\sigma$ is nondecreasing and onto $[1+H_k,k+1]$.
    In particular, $\sum_{j=1+H_k}^{k+1} \left( \abs{\phi_\sigma^{-1}(\{j\})}\m1 \right) = H_k$.
    Below, for notational convenience, we allow ourselves to write $L_{k+1}$ and $U_{k+1}$ for generic variables with laws $\TGeom{k\p1}{1\m q_n}$ and $\Unif{[k\p1]}$, although their coupling has not yet been defined.
    Henceforth, conditionally given $\cF_k$ and using \Cref{lem: punctual dTV Tgeom with Unif} on the fourth line:
    \begin{align*}
        \dtv{\phi_{\tau_n^k}\left( L_{k+1} \right)}{\phi_{u_n^k}\left( U_{k+1} \right)} 
        &= \tfrac12 \sum_{j=1+H_k}^{k+1} \abs{ \prob{\phi_{\tau_n^k}\left( L_{k+1} \right)=j} - \prob{\phi_{u_n^k}\left( U_{k+1} \right)=j} } \\ 
        &= \tfrac12 \sum_{j=1+H_k}^{k+1} \abs{ \prob{ L_{k+1} \in \phi_{\tau_n^k}^{-1}(\{j\})} - \frac{ \abs{\phi_{u_n^k}^{-1}(\{j\})} }{k+1} } \\
        &\le \tfrac12 \sum_{j=1+H_k}^{k+1} \left(
        \abs{ \prob{ L_{k+1} \in \phi_{\tau_n^k}^{-1}(\{j\})} - \frac{ \abs{\phi_{\tau_n^k}^{-1}(\{j\})} }{k+1} } 
        + \frac{\abs{ \abs{\phi_{u_n^k}^{-1}(\{j\})} \m \abs{\phi_{\tau_n^k}^{-1}(\{j\})} }}{k+1}
        \right)\\
        &\le \tfrac12 \sum_{j=1+H_k}^{k+1} \left(
        3(1-q) \abs{\phi_{\tau_n^k}^{-1}(\{j\})} 
        + \frac{ \abs{\phi_{u_n^k}^{-1}(\{j\})}\m1 + \abs{\phi_{\tau_n^k}^{-1}(\{j\})}\m1 }{k+1}
        \right)\\
        &\le \tfrac12 \left( 3(1-q)\left( k+1 \right) + \frac{2H_k}{k+1} \right)\\
        &\le \tfrac32 (k+1) (1-q) + \frac{H_k}{k+1} .
    \end{align*}
    Thus we can apply \Cref{lem: coupling dtv pushforward}:
    there exists a coupling of $L_{k+1}$ and $U_{k+1}$, conditionally given $\cF_k$, such that 
    \begin{equation}\label{eq: coupling proba pushforward phi}
        \prob{ \phi_{\tau_n^k}(L_{k+1}) \ne \phi_{u_n^k}(U_{k+1}) } \le \tfrac{3}{2}(k+1)(1-q) + \frac{H_k}{k+1} .
    \end{equation}
    Now, note that the last values of $\pat{[k\p1]\!\setminus\!\cH_k}{\tau_n^{k+1}}$ and $\pat{[k\p1]\setminus\cH_k}{u_n^{k+1}}$ are given by
    \begin{align*}
        \left\{
        \begin{array}{ll}
        \pat{[k\p1]\setminus\cH_k}{\tau_n^{k+1}}(-1) = \tau_n^{k+1}(k\p1) - \sum_{h\in\cH_k} \One{\tau_n^{k}(h) < \tau_n^{k+1}(k+1)} ;\smallskip\\
        \pat{[k\p1]\setminus\cH_k}{u_n^{k+1}}(-1) = u_n^{k+1}(k\p1) - \sum_{h\in\cH_k} \One{u_n^{k}(h) < u_n^{k+1}(k+1)} ;
        \end{array}
        \right.
    \end{align*}
    using the notation $\sigma(-1) = \sigma(\ell-1)$ for $\sigma\in\S_\ell$.
    Recalling that $\tau_n^{k+1}(k+1) = k+2-L_{k+1}$ and likewise $u_n^{k+1}(k+1) = k+2-U_{k+1}$, the previous two numbers are equal if and only if
    \begin{equation*}
        L_{k+1} + \sum_{h\in\cH_k} \One{L_{k+1} < k-\tau_n^{k}(h)+2}
        = U_{k+1} + \sum_{h\in\cH_k} \One{U_{k+1} < k-u_n^{k}(h)+2} \,,
    \end{equation*}
    i.e.~if $\phi_{\tau_n^k}\left( L_{k+1} \right) = \phi_{u_n^k}\left( U_{k+1} \right)$.
    When this is the case, since we assumed that $\pat{[k]\!\setminus\!\cH_k}{\tau_n^k} = \pat{[k]\!\setminus\!\cH_k}{u_n^k}$, we deduce that $\pat{[k+1]\setminus\cH_k}{\tau_n^{k+1}} = \pat{[k+1]\setminus\cH_k}{u_n^{k+1}}$ and we can set $\cH_{k+1} = \cH_k$.
    Otherwise, if $\phi_{\tau_n^k}\left( L_{k+1} \right) \ne \phi_{u_n^k}\left( U_{k+1} \right)$, we set $\cH_{k+1} = \cH_k \cup \{k+1\}$ and we indeed have 
    \[
    \pat{[k\p1]\setminus\cH_{k+1}}{\tau_n^{k+1}} 
    = \pat{[k]\setminus\cH_{k}}{\tau_n^{k}}
    = \pat{[k]\!\setminus\cH_{k}}{u_n^{k}}
    = \pat{[k\p1]\setminus\cH_{k+1}}{u_n^{k+1}}
    .\]
    This coupling is illustrated in \Cref{fig: coupling example}.

    \begin{figure}
        \centering
        \includegraphics[scale=.8]{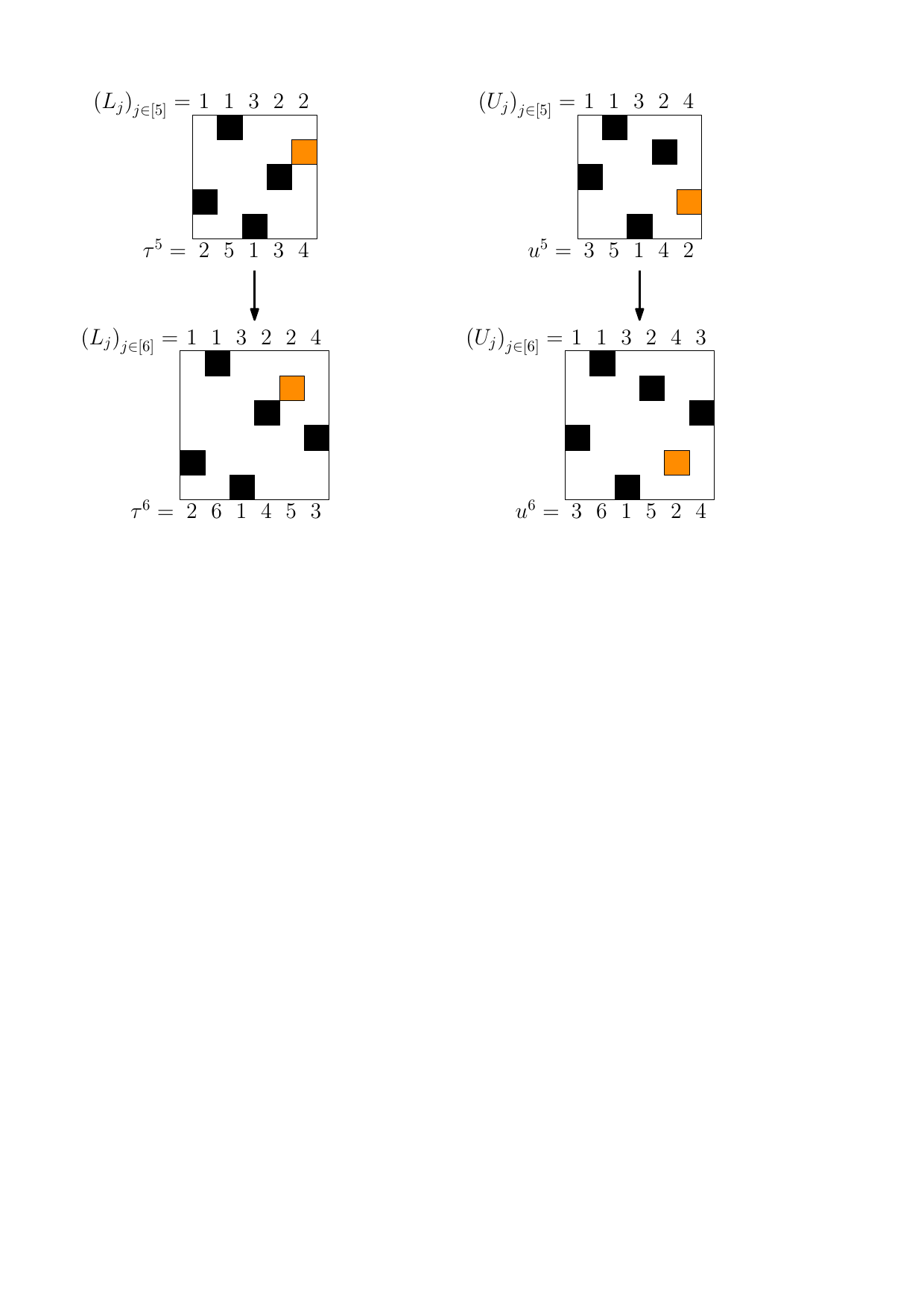}
        \caption{Illustration of our coupling, and how it adapts to errors.
        Here, the first error happens at step $k=5$ (in orange). 
        The following left-inversion counts are coupled using the functions $\phi_{\tau^5} : x \mapsto x + \One{x<3}$ and $\phi_{u^5} : x \mapsto x + \One{x<5}$.
        No error happens at step $6$, since $\phi_{\tau^5}(L_6) = 4 = \phi_{u^5}(U_6)$.
        Observe that $\pat{[6]\setminus\{5\}}{\tau^6} = \pat{[6]\setminus\{5\}}{u^6}$, as expected.
        }
        \label{fig: coupling example}
    \end{figure}

    We have thus defined our coupling of $\tau_n$ and $u_n$, and need to study the total number of errors $H_n$.
    Using \eqref{eq: coupling proba pushforward phi}, we have for each $k<n$:
    \begin{align*}
        \expecond{H_{k+1}}{\cF_k} \le H_k + \tfrac{3}{2} (k+1) (1-q) + \frac{H_k}{k+1}
        = H_k \frac{k+2}{k+1} + \tfrac{3}{2} (k+1) (1-q) ,
    \end{align*}
    whence
    \begin{align*}
        \expecond{\frac{H_{k+1}}{k+2}}{\cF_k} \le \frac{H_k}{k+1} + \tfrac{3}{2} (1-q) .
    \end{align*}
    We deduce by induction that
    \begin{align}\label{eq: bound mean H_k}
        \expec{\frac{H_{k+1}}{k+2}} \le \tfrac{3}{2} (1-q) k 
    \end{align}
    since $H_1=0$.
    Applied with $k+1 = n$, this yields $\expec{H_{n}} \le \tfrac{3}{2} n^2 (1-q)$.
\end{proof}

\begin{proof}[Proof of \Cref{th: stability close to uniform}]
    Using \Cref{lem: coupling Mallows with uniform}, this is straightforward.
    
    Since $n^{3/2}(1-q)\to0$, we have $H_n / \sqrt{n} \cv{} 0$ in $L^1$ as $n\to\infty$, hence $H_n = \Landau{o_{\P}}{}{\sqrt{n}}$.
    Furthermore, $\check \tau_n = \check u_n$ where $\check \tau_n := \pat{[n]\setminus\cH_n}{\tau_n}$ and $\check u_n := \pat{[n]\setminus\cH_n}{u_n}$.
    By bounding the number of subsets of $[n]$ having a non-empty intersection with $\cH_n$, we have:
    \begin{equation*}
        \abs{ \occ{\pi}{\tau_n} - \occ{\pi}{\check \tau_n} } \le n^{r-1} H_n
    \end{equation*}
    and likewise for $u_n$, thus:
    \begin{equation*}
        \occ{\pi}{\tau_n} = \occ{\pi}{u_n} + \Landau{o_{\P}}{}{n^{r-1/2}} ,
    \end{equation*}
    where $\Landau{o_{\P}}{}{n^{r-1/2}}$ is uniform in $\pi$.
    Using \Cref{th: CLT occ unif}, this proves \Cref{th: stability close to uniform}.
\end{proof}

\section{The transition regime: near decomposability}\label{sec: proofs transition}

This section is devoted to the proof of \Cref{th: order of occ for transition regime}.
Our proof requires good control over the displacements of points in random Mallows permutations, specifically:

\begin{theorem}[\cite{BP15}]\label{th: concentration inequality displacements}
    Let $q\in(0,1)$ and $\tau \sim \Mallows{n}{q}$.
    Then for all $i\in[n]$ and $t>0$:
    \begin{equation*}
        \prob{ \abs{\tau(i)-i} \ge t } \le 2 q^t .
    \end{equation*}
\end{theorem}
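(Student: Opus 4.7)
The plan is to decompose the event $\{\abs{\tau(i)-i}\ge t\}$ into its two halves and bound each by $q^t$, exploiting the fact that the left- and right-inversion counts of a Mallows permutation have explicit truncated-geometric marginals (\Cref{lem: left- and right-inversion numbers of finite Mallows}). The key observation, purely combinatorial, is that a large displacement at position $i$ forces a large inversion count at that position.

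More precisely, the first step is a pigeonhole argument. Suppose $\tau(i) \ge i+t$. Among the values $1, 2, \dots, \tau(i)-1$, at most $i-1$ can sit at positions in $[i-1]$, so at least $\tau(i)-1-(i-1) = \tau(i)-i \ge t$ of them must sit at positions $> i$. Each such position contributes an inversion, so $r_i(\tau) \ge t$. A symmetric argument shows that $\tau(i) \le i-t$ forces $\ell_i(\tau) \ge t$.

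The second step is probabilistic. By \Cref{lem: left- and right-inversion numbers of finite Mallows}, $R_i := r_i(\tau)+1 \sim \TGeom{n-i+1}{1-q}$, and a short computation using the explicit weights $(1-q)q^{j-1}/(1-q^{n-i+1})$ gives
\[
\prob{r_i(\tau) \ge t} \,=\, \prob{R_i \ge t+1} \,=\, \frac{q^t\left(1-q^{n-i+1-t}\right)}{1-q^{n-i+1}} \,\le\, q^t .
\]
The analogous inequality $\prob{\ell_i(\tau) \ge t} \le q^t$ follows in the same way from $L_i := \ell_i(\tau)+1 \sim \TGeom{i}{1-q}$. A union bound then yields the claim.

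The approach is quite direct and I do not expect a significant obstacle: both the pigeonhole reduction to inversion counts and the tail estimate for a truncated geometric are elementary. The only minor subtlety is checking that truncation never makes things worse than in the untruncated geometric case, which is immediate since dividing by $1-q^{n-i+1} < 1$ would only inflate the probability, yet the numerator $1-q^{n-i+1-t}$ carries the compensating factor.
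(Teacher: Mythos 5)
Your proof is correct. Note that the paper itself gives no argument for this statement---it is imported verbatim from \cite{BP15}---so there is no in-paper proof to compare against; your derivation, which combines the pigeonhole reduction (a displacement $\tau(i)-i\ge t$ forces $r_i(\tau)\ge \tau(i)-i\ge t$, and symmetrically for $\ell_i$) with the truncated-geometric tail bound coming from \Cref{lem: left- and right-inversion numbers of finite Mallows}, is a valid self-contained proof and is in the same spirit as the argument in \cite{BP15}. Both halves check out: the pigeonhole count is exact (at most $i-1$ of the $\tau(i)-1$ smaller values can occupy positions before $i$), and for $R_i\sim\TGeom{n-i+1}{1-q}$ one indeed has $\prob{R_i\ge t+1}=q^{t}\left(1-q^{\,n-i+1-t}\right)/\left(1-q^{\,n-i+1}\right)\le q^{t}$ since $1-q^{\,n-i+1-t}\le 1-q^{\,n-i+1}$. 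The only pedantic point is that the statement allows arbitrary real $t>0$, while your displayed identity for $\prob{R_i\ge t+1}$ is exact only for integer $t$; since the displacement and the inversion counts are integers, you should pass to $\lceil t\rceil$ first and then use $q^{\lceil t\rceil}\le q^{t}$, which changes nothing of substance.
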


\begin{figure}
    \centering
    \includegraphics[scale=.86]{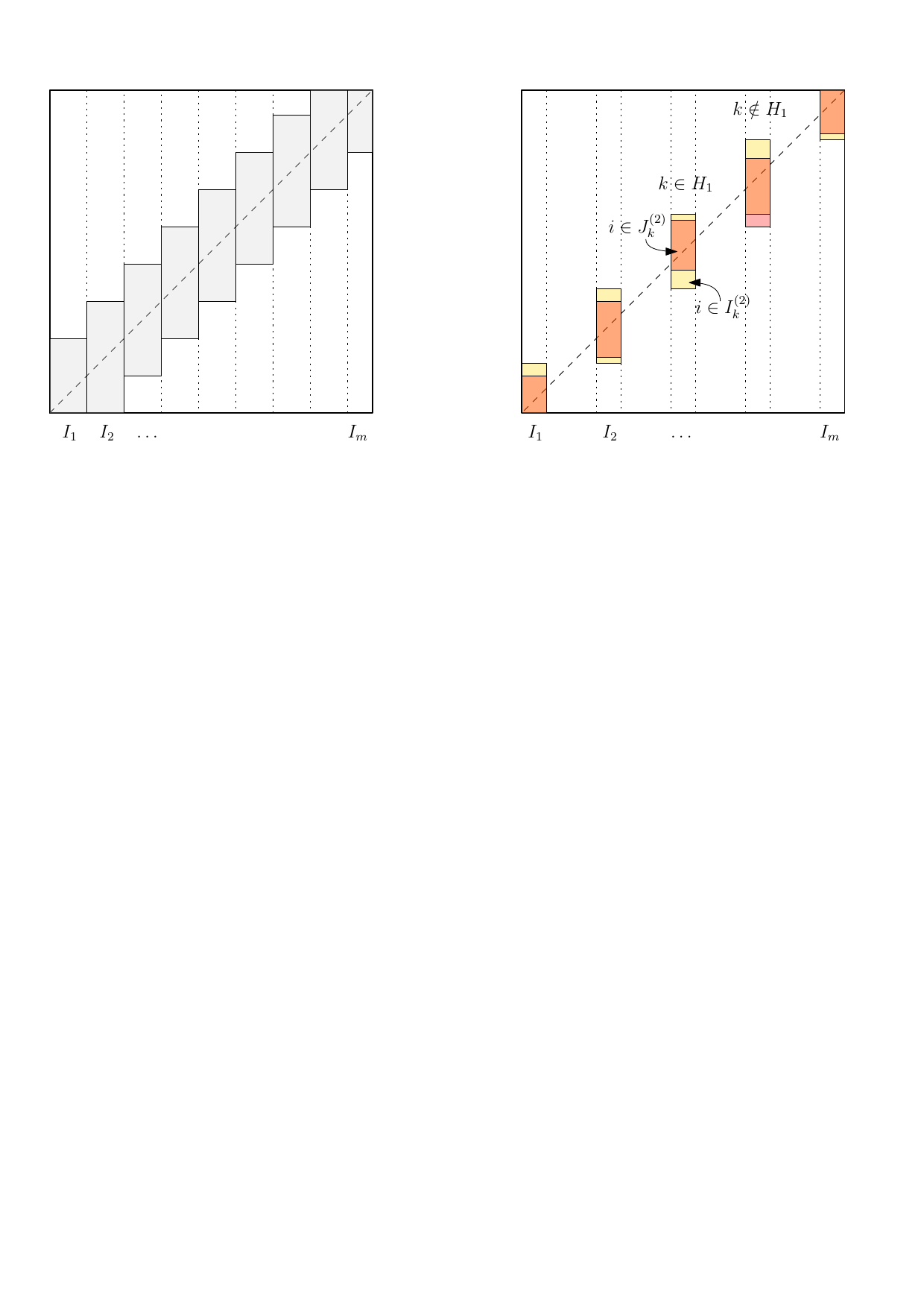}
    \caption{
    On the left: the intervals defined in the proof of the upper bound of \Cref{th: order of occ for transition regime}. 
    Under the event $E$, all permutation points are contained in the grey area.
    On the right: the intervals defined in the proof of the lower bound of \Cref{th: order of occ for transition regime}.
    The points with index in $I_k^{(2)}$, resp.~$J_k^{(2)}$, are in the yellow area, resp.~red area.
    With high probability, a positive proportion of points in the red area are in the yellow area.
    }
    \label{fig: transition intervals}
\end{figure}

\begin{proof}[Proof of \Cref{th: order of occ for transition regime}]
    We prove separately an upper and a lower bound on $\occ{\pi}{\tau_n}$, with similar techniques.
    The intervals used in the proof are illustrated in \Cref{fig: transition intervals}.
    
    \paragraph{Upper bound:}
    Consider an arbitrary sequence $(\lambda_n)$ such that $\lambda_n - \log n \to +\infty$ as $n\to\infty$, and define the event $E := \left\{ \forall i\in[n],\quad \abs{\tau_n(i)-i} < \frac{\lambda_n}{1-q} \right\}$.
    Then by \Cref{th: concentration inequality displacements} along with a union bound:
    \begin{equation}\label{eq: displacement for upper bound}
        1-\prob{E}
        = \prob{ \exists i\in[n],\quad \abs{\tau_n(i)-i} \ge \frac{\lambda_n}{1-q} }
        \le 2 n\, e^{ \frac{\lambda_n}{1-q} \log q }
        \le 2 n\, e^{-\lambda_n}
        \cv{n\to \infty} 0.
    \end{equation}
    This controls the distance of all points $(i,\tau_n(i))$ to the diagonal, and will allow us to restrict the patterns of $\tau_n$ in a sequence of overlapping intervals.
    First set $m := \lceil \frac{n(1-q)}{\lambda_n} \rceil$, and $I_k := \left[ 1 + \lfloor \frac{(k-1) \lambda_n}{1-q} \rfloor \,,\, \lfloor \frac{k \lambda_n}{1-q} \rfloor \wedge n \right]$ for $1\le k\le m$. 
    These disjoint intervals cover $[n]$.
    
    Now, let $\pi_j \in \S_{r_j}$ be a block of $\pi$ and let $1\le i_1 < \dots < i_{r_j}\le n$ be such that $\pat{\{i_1,\dots,i_{r_j}\}}{\tau_n} = \pi_j$.
    For each $\ell \in [r_j]$, define $k_\ell\in[m]$ by $i_\ell \in I_{k_\ell}$.
    Then we claim that under $E$, $\abs{k_{\ell+1} - k_\ell} \le 2$ for all $\ell\in[r_j\m1]$.
    Indeed, suppose that $k_{\ell_0+1} \ge k_{\ell_0}+3$ for some $\ell_0\in[r_j\m1]$.
    Then for any $\ell\le {\ell_0}$ and ${\ell'} \ge {\ell_0+1}$:
    \begin{equation*}
        \tau_n(i_\ell) 
        < i_\ell + \frac{\lambda_n}{1-q}
        \le i_{\ell_0} + \frac{\lambda_n}{1-q}
        \le \frac{(k_{\ell_0}+1)\lambda_n}{1-q}
        \le \frac{(k_{\ell_0+1}-2)\lambda_n}{1-q}
        < i_{\ell_0+1} - \frac{\lambda_n}{1-q} 
        \le i_{\ell'} - \frac{\lambda_n}{1-q} 
        < \tau_n(i_{\ell'}) .
    \end{equation*}
    This contradicts the indecomposability of $\pat{\{i_1,\dots,i_{r_j}\}}{\tau_n} = \pi_j$, and proves the claim.

    This leads us to define $J_k := I_k \cup I_{k+1} \cup \dots \cup I_{k+2(r-1)}$ for each $k\in[m-2(r\m1)]$.
    By the previous claim, if $\pat{\{i_1,\dots,i_{r_j}\}}{\tau_n} = \pi_j$, then there exists $k\in[m-2(r\m1)]$ such that $i_1,\dots,i_{r_j} \in J_k$.
    Subsequently, if $\pat{\{i_1,\dots,i_{r}\}}{\tau_n} = \pi$, then there exist $k_1\le\dots\le k_d$ such that $i_{r_1+\dots+r_{j-1}+1},\dots,i_{r_1+\dots+r_j} \in J_{k_j}$ for all $j\in[d]$.
    This implies the following inequality under $E$:
    \begin{align}\label{eq: deterministic upper bound for occ in transition regime}
        \occ{\pi}{\tau_n}
        &\le \sum_{1\le k_1\le \dots \le k_d \le m-2(r-1)} \, \prod_{1\le j\le d} \occ{\pi_j}{ \pat{J_{k_j}}{\tau_n} } \nonumber\\
        &\le \sum_{1\le k_1\le \dots \le k_d \le m-2(r-1)} \, \prod_{1\le j\le d} \abs{J_{k_j}}^{\abs{\pi_j}} \nonumber\\
        &\le m^d \left( \frac{2 r \lambda_n}{1-q}\right)^{r} \nonumber\\
        &\le (2r)^r (\lambda_n)^{r-d} \frac{n^d}{(1-q)^{r-d}} .
    \end{align}
    To deduce the probability upper bound, set $\lambda_n := 2\log n$, then $\beta := (4r)^r$ and $\gamma := r-d$:
    \begin{equation*}
        \prob{ \occ{\pi}{\tau_n} \le \beta\, (\log n)^\gamma\, \frac{n^d}{(1-q)^{r-d}} }
        \ge \prob{E} \cv{n\to\infty} 1
    \end{equation*}
    by \eqref{eq: displacement for upper bound}.
    To obtain a moment upper bound, we can set $\lambda_n := (rp\p1)\log n$ for any desired $p>0$.
    By \eqref{eq: displacement for upper bound} and \eqref{eq: deterministic upper bound for occ in transition regime}:
    \begin{multline*}
        \expec{ \occ{\pi}{\tau_n}^p }
        \le (2r)^{rp} (\lambda_n)^{rp} \left( \frac{n^d}{(1-q)^{r-d}} \right)^p 
        + n^{rp} \left(\big. 1-\prob{E} \right) \\
        \le (2(rp\p1)r)^{rp} (\log n)^{rp} \left( \frac{n^d}{(1-q)^{r-d}} \right)^p 
        + 2 n^{rp+1} e^{-(rp+1)\log n} .
    \end{multline*}
    The desired upper bound readily follows.
    
    \paragraph{Lower bound:}
    This time, the idea is to consider a disjoint sequence of boxes that contain a good proportion of permutation points.
    The box sizes shall be tuned so that the induced sub-permutations are random Mallows permutations in the thermodynamic regime, and we can apply \eqref{eq: first order in thermodynamic regime}.
    
    Fix $\lambda>0$, then set $m := \lfloor \frac{n(1-q)}{3\lambda} \rfloor$ and $I_k := \left[ 1 + \lfloor \frac{3(k-1)\lambda}{1-q} \rfloor \,,\, \lfloor \frac{3(k-1)\lambda + \lambda}{1-q} \rfloor \right]$ for $1\le k\le m$.
    These are disjoint intervals in $[n]$, and the induced permutation $\tau_{n,k} := \pat{I_k}{\tau_n}$ follows the $\Mallows{n_k}{q}$ distribution for each $k\in[m]$ by \cite[Corollay~2.7]{BP15}, where $n_k := \abs{I_k} = \frac{\lambda}{1-q} + \O(1)$ uniformly in $k$.
    
    We could apply \eqref{eq: first order in thermodynamic regime} to these induced permutations; however, some occurrences of $\pi_j$'s in $\tau_{n,k}$'s might overlap vertically, thus not yielding an occurrence of $\pi$ in $\tau_n$.
    We solve this by cropping the $\tau_{n,k}$'s vertically, as illustrated in \Cref{fig: transition intervals}. 
    For each $k\in[m]$, set
    \begin{align*}
        \left\{
        \begin{array}{lll}
            I_k^{(1)} := \left\{ i\in I_k :\, \tau_n(i) \le \frac{3(k\m1)\lambda - \lambda}{1-q} \right\} ;\smallskip\\
            I_k^{(2)} := \left\{ i\in I_k :\, \frac{3(k\m1)\lambda - \lambda}{1-q} < \tau_n(i) \le \frac{3(k\m1)\lambda + 2\lambda}{1-q} \right\} ;\smallskip\\
            I_k^{(3)} := \left\{ i\in I_k :\, \frac{3(k\m1)\lambda + 2\lambda}{1-q} < \tau_n(i) \right\} ;
        \end{array}
        \right.
        \;
        \left\{
        \begin{array}{lll}
            J_k^{(1)} := \left\{\Big. i\in I_k :\, \tau_{n,k}\left(\,\check i\,\right) \le \lceil \frac13 n_k \rceil \right\} ;\smallskip\\
            J_k^{(2)} := \left\{\Big. i\in I_k :\, \lceil \frac13 n_k \rceil < \tau_{n,k}\left(\,\check i\,\right) \le \lfloor \frac23 n_k \rfloor \right\} ;\smallskip\\
            J_k^{(3)} := \left\{\Big. i\in I_k :\, \lfloor \frac23 n_k \rfloor < \tau_{n,k}\left(\,\check i\,\right) \right\} ;
        \end{array}
        \right.
    \end{align*}
    with the notation $\check i := i - \lfloor \frac{3(k-1)\lambda}{1-q} \rfloor$.
    The reason we introduced $I_k^{(2)}$ is that if $i\in I_k^{(2)}$ and $i'\in I_{k'}^{(2)}$ for $k<k'$, then $\tau_n(i) < \tau_n(i')$. Therefore:
    \begin{equation}\label{eq: lower bound occ transition with Ik2}
        \occ{\pi}{\tau_n} \ge \sum_{1\le k_1<\dots<k_d\le m} \prod_{j=1}^d \occ{\pi_j}{\pat{I_{k_j}^{(2)}}{\tau_n}} .
    \end{equation}
    On the other hand, the definition of $J_k^{(2)}$ is motivated by the fact that the random permutation $\tau_{n,k}^{(2)} := \pat{J_k^{(2)}}{\tau_{n}}$ is $\Mallows{\frac13 n_k \p \O(1)}{q}$ distributed.
    Indeed, it can be written as $\pat{ \left[ 1\p\lceil\frac13 n_k\rceil , \lfloor\frac23 n_k\rfloor \right] }{\tau_{n,k}^{-1}}^{-1}$, which follows the claimed distribution by \cite[Corollaries~2.3 and~2.7]{BP15}.
    We shall now prove that for many $k$'s, $J_k^{(2)} \subseteq I_k^{(2)}$ and $\occ{\pi_j}{\tau_{n,k}}$ has order $n_k^{\abs{\pi_j}}$.
    
    For each $k\in[m]$ and $i\in I_k$:
    \begin{equation*}
        \prob{i\notin I_k^{(2)}}
        \le \prob{ \abs{\tau_n(i)-i} \ge \frac{\lambda}{1-q} }
        \le 2 e^{-\lambda}
    \end{equation*}
    by \Cref{th: concentration inequality displacements}.
    Hence $\expec{\abs{I_k^{(1)}\cup I_k^{(3)}}} \le 2 n_k e^{-\lambda}$, and by Markov's inequality:
    \begin{equation*}
        \prob{\abs{I_k^{(1)}\cup I_k^{(3)}} \ge \frac13 n_k}
        \le 6 e^{-\lambda} .
    \end{equation*}
    Thus $H_1 := \left\{ k\in[m] : \abs{I_k^{(1)}\cup I_k^{(3)}} \ge \frac13 n_k \right\}$ satisfies $\expec{\abs{H_1}} \le 6 m e^{-\lambda}$, and:
    \begin{equation}\label{eq: control size H_1}
        \prob{ \abs{H_1} \ge \frac14 m } \le 24 e^{-\lambda} .
    \end{equation}
    Let $k\in[m]$ and suppose that $k\notin H_1$.
    Then $\abs{I_k^{(1)}} < \frac13 n_k$.
    Since $\abs{J_k^{(1)}} \ge \frac13 n_k$, and by construction, we deduce that $I_k^{(1)} \subset J_k^{(1)}$.
    Likewise, it holds that $I_k^{(3)} \subset J_k^{(3)}$.
    Subsequently $J_k^{(2)} \subset I_k^{(2)}$, and \eqref{eq: lower bound occ transition with Ik2} implies:
    \begin{equation}\label{eq: lower bound occ transition with Jk2}
        \occ{\pi}{\tau_n}
        \ge \sum_{1\le k_1 < \dots < k_d\le m} \prod_{j=1}^d \occ{\pi_j}{\tau_{n,k_j}^{(2)}} \One{k_j \notin H_1} .
    \end{equation}
    
    Write $r_j := \abs{\pi_j}$ for $j\in[d]$.
    Recall that $\tau_{n,k}^{(2)} \sim \Mallows{\frac13 n_k + \O(1)}{q}$, where $\frac13 n_k (1-q) \to \frac13 \lambda$ as $n\to\infty$.
    Then by \eqref{eq: first order in thermodynamic regime}, uniformly in $j\in[d]$ and $k\in[m]$:
    \begin{equation*}
        \prob{ \occ{\pi_j}{\tau_{n,k}^{(2)}} \le \frac12 \dens{\pi_j}{\mu_{\lambda/3}} \frac{n_k^{r_j}}{3^{r_j} r_j !} } \cv{n\to\infty} 0 .
    \end{equation*}
    Set $c_{j,\lambda} := \frac12\frac{\dens{\pi_j}{\mu_{\lambda/3}}}{3^{r_j} r_j !}$ and $H_2 := \left\{ k\in[m] :\, \exists j\in[d],\, \occ{\pi_j}{\tau_{n,k}^{(2)}} \le c_{j,\lambda} n_k^{r_j} \right\}$.
    Then by Markov's inequality: 
    \begin{equation}\label{eq: control size H_2}
        \prob{ \abs{H_2} \ge \frac14 m } 
        \le \frac4m \sum_{1\le k\le m} \sum_{1\le j\le d} \prob{ \occ{\pi_j}{\tau_{n,k}^{(2)}} \le c_{j,\lambda} n_k^{r_j} }
        \cv{n\to\infty} 0 .
    \end{equation}
    Now \eqref{eq: lower bound occ transition with Jk2} yields:
    \begin{align*}
        \occ{\pi}{\tau_n}
        &\ge \sum_{1\le k_1 < \dots < k_d\le m} \prod_{j=1}^d \occ{\pi_j}{\tau_{n,k_j}^{(2)}} \One{k_j \notin H_1} \One{k_j \notin H_2} \\
        &\ge \sum_{1\le k_1 < \dots < k_d\le m} \prod_{j=1}^d c_{j,\lambda} n_{k_j}^{r_j} \One{k_j \notin H_1} \One{k_j \notin H_2} \\
        &\ge \sum_{1\le k_1 < \dots < k_d\le m} \prod_{j=1}^d c_{j,\lambda} \left(\frac{\lambda}{1-q} -2\right)^{r_j} \One{k_j \notin H_1} \One{k_j \notin H_2} \\
        &\ge c_\lambda (1-q)^{-r} \left(m - \abs{H_1} - \abs{H_2}\right)^d 
    \end{align*}
    for some $c_\lambda >0$.
    Therefore, using \eqref{eq: control size H_1} and \eqref{eq: control size H_2} and setting $c_\lambda' := \frac12 c_\lambda / (6\lambda)^d$:
    \begin{equation*}
        \prob{ \occ{\pi}{\tau_n} \ge c_\lambda' \frac{n^d}{(1-q)^{r-d}} } 
        \ge \prob{ \abs{H_1}+\abs{H_2} < \frac12 m }
        \ge 1 - 24e^{-\lambda} - o(1)
    \end{equation*}
    Since $24e^{-\lambda}$ goes to $0$ as $\lambda\to\infty$, this concludes the proof of the probability lower bound.
    The moment lower bound is then straightforward:
    \begin{equation*}
        \expec{ \occ{\pi}{\tau_n}^p }
        \ge \left( 1 - 24e^{-\lambda} - o(1) \right) \left( c_\lambda' \frac{n^d}{(1-q)^{r-d}} \right)^p ,
    \end{equation*}
    for any fixed $\lambda>0$ such that $24e^{-\lambda} < 1$.
\end{proof}

\begin{remark}
    In the lower bound of the previous proof, we could replace the sub-permutations $\pat{J_k^{(2)}}{\tau_n}$ by $\pat{I_k^{(2)}}{\tau_n}$, which follow the Mallows distribution by \cite[Lemma~3.15]{GP18}.
    In this case, instead of proving that many $J_k^{(2)}$'s are in $I_k^{(2)}$'s, it should be shown (by the same arguments) that many $I_k^{(2)}$'s contain $\Omega(n_k)$ permutation points.
\end{remark}

\section{The regenerative regime}

\subsection{Infinite Mallows permutations}\label{sec: infinite Mallows q fixed}

The authors of \cite{GO10} defined an infinite version of the Mallows distribution, as a random element of $\S_\infty$.
In this section we recall their construction, and present a few key properties.

Fix $q\in[0,1)$, and let $\left( R_i \right)_{i\ge1}$ be a sequence of i.i.d.~$\Geom{1\m q}$ r.v.'s.
The sequence $(R_i-1)_{i\ge1}$ almost surely satisfies the condition of \Cref{lem: range of psi_infty}, and we say that the random permutation $\Sigma \in \S_\infty$ with right-inversion counts $(R_i-1)_{i\ge1}$ follows the $\Mallows{\N^*}{q}$ distribution.
An important property of the $\Mallows{\N^*}{q}$ measure is that it can be seen as the projective limit of $\Mallows{n}{q}$ measures, in the following sense \cite[Proposition~A.6]{GO10}:

\begin{lemma}\label{lem: pattern of infinite Mallows}
    If $\Sigma \sim \Mallows{\N^*}{q}$ then for any $n\ge1$, the finite sub-permutation $\tau_n := \pat{[n]}{\Sigma}$ follows the $\Mallows{n}{q}$ distribution.
\end{lemma}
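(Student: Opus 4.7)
The plan is to compute $\prob{\tau_n = \sigma}$ directly from the construction of $\Sigma$ via its i.i.d.~right-inversion counts and check that it matches the Mallows density $q^{\inv\sigma}/Z_{n,q}$. Writing $R_i := r_i(\Sigma) + 1 \sim \Geom{1-q}$, the permutation $\Sigma$ is a deterministic function of $(R_i)_{i\ge 1}$ through $\psi_\infty^{-1}$, and $\tau_n$ depends only on $R_1, \dots, R_n$.

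The first step is to partition $\{\tau_n = \sigma\}$ according to the set of actual values $\{\Sigma(1), \dots, \Sigma(n)\} \subset \N^*$. Writing these sorted as $1 \le s_1 < \dots < s_n$, the fact that $\tau_n(i)$ is the rank of $\Sigma(i)$ in this set means the event restricts on this slice to $\{\Sigma(i) = s_{\sigma(i)}\text{ for all }i\}$. Running the insertion procedure for $\psi_\infty^{-1}$, the requirement $\Sigma(k) = s_{\sigma(k)}$ pins $R_k$ to the rank of $s_{\sigma(k)}$ inside $\N^* \setminus \{s_{\sigma(1)}, \dots, s_{\sigma(k-1)}\}$, which a direct count shows equals $s_{\sigma(k)} - (k-1) + \ell_k(\sigma)$ (since exactly $k-1-\ell_k(\sigma)$ of the previously placed values lie below $s_{\sigma(k)}$). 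Independence of the $R_k$'s then yields
\[
\prob{\forall i,\, \Sigma(i) = s_{\sigma(i)}} = (1-q)^n\, q^{\sum_j s_j - n(n+1)/2 + \inv\sigma},
\]
using $\sum_k s_{\sigma(k)} = \sum_j s_j$ and $\sum_k \ell_k(\sigma) = \inv\sigma$.

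The second step is to sum over $s_1 < \dots < s_n$. Via the change of variables $d_k = s_k - k$, which maps the strict-inequality constraint into $0 \le d_1 \le \dots \le d_n$, one recognizes the standard $q$-Pochhammer identity
\[
\sum_{1 \le s_1 < \dots < s_n} q^{s_1 + \dots + s_n} = q^{n(n+1)/2} \prod_{k=1}^n \frac{1}{1 - q^k},
\]
and the two exponents $q^{n(n+1)/2}$ cancel, leaving $\prob{\tau_n = \sigma} = q^{\inv\sigma} \prod_{k=1}^n \frac{1-q}{1-q^k} = q^{\inv\sigma}/Z_{n,q}$, as desired. The main obstacle is the careful bookkeeping in the insertion step to express $R_k$ correctly in terms of $s_{\sigma(k)}$ and $\ell_k(\sigma)$: the inversion count $\ell_k(\sigma)$ must enter via counting exactly how many previously placed values lie below the current one, and this is what yields the crucial factor $q^{\inv\sigma}$. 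Everything else is routine manipulation.
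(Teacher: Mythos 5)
Your computation is correct, and it is worth noting that the paper itself does not prove this lemma at all: it simply cites \cite[Proposition~A.6]{GO10}. So what you have written is a self-contained verification where the paper relies on a reference. Your route --- slicing $\{\tau_n=\sigma\}$ over the value set $\{s_1<\dots<s_n\}=\{\Sigma(1),\dots,\Sigma(n)\}$, reading off from the insertion procedure that the event forces $R_k = s_{\sigma(k)}-(k-1)+\ell_k(\sigma)$, and then summing out the $s_j$'s via $\sum_{s_1<\dots<s_n} q^{\sum s_j} = q^{n(n+1)/2}\prod_{k=1}^n(1-q^k)^{-1}$ --- checks out in every detail: the rank bookkeeping $k-1-\ell_k(\sigma)$ for the previously placed values below $s_{\sigma(k)}$ is exactly right, the prescribed values of $R_k$ are automatically $\ge 1$ (since the $(k-1)-\ell_k(\sigma)$ smaller placed values are distinct integers below $s_{\sigma(k)}$), the slices are disjoint, and the exponents of $q^{n(n+1)/2}$ cancel to give $q^{\inv{\sigma}}(1-q)^n\prod_k(1-q^k)^{-1} = q^{\inv\sigma}/Z_{n,q}$. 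The more common alternative (and the flavor of the cited source) is a consistency argument: one shows that the pattern of a $\Mallows{n+1}{q}$ permutation on $[n]$ is $\Mallows{n}{q}$, using the truncated-geometric description of inversion counts (\Cref{lem: left- and right-inversion numbers of finite Mallows}), and then identifies $\Mallows{\N^*}{q}$ as the projective limit; that route avoids the $q$-series identity but requires the projective-limit setup, whereas your direct computation gets the finite marginal in one pass from the geometric right-inversion counts. Either way, the statement is established; your argument has no gaps.
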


\Cref{lem: pattern of infinite Mallows} allows us to define $\tau_n \sim \Mallows{n}{q}$ as a sub-permutation of $\Sigma \sim \Mallows{\N^*}{q}$, and to study its asymptotic properties as $n\to\infty$ for fixed $q\in[0,1)$.
We call this regime \enquote{regenerative} because of the following property \cite[Corollary~5.1]{PT19} (see also the discussion at the beginning of \cite[Section~3]{BB17}):

\begin{lemma}\label{lem: infinite Mallows is regenerative}
    Let $\Sigma \sim \Mallows{\N^*}{q}$.
    Then we can write $\Sigma = B_1 \oplus \dots \oplus B_k \oplus \dots$ where $(B_k)_{k\ge1}$ is a sequence of i.i.d.~random blocks.
    Conditionally given its size $|B_1|$, the permutation $B_1$ is a random $\Mallows{|B_1|}{q}$ permutation conditioned to be indecomposable.
\end{lemma}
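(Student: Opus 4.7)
The plan is to realise $\Sigma$ through its right-inversion counts $R_i := r_i(\Sigma)+1$, which by definition of $\Mallows{\N^*}{q}$ are i.i.d.~$\Geom{1-q}$, and to reduce the block decomposition of $\Sigma$ to a renewal decomposition of the i.i.d.~sequence $(R_i)_{i\ge1}$ via \Cref{lem: CNS right-inversion decomposable infinite}. That lemma identifies splittings $\Sigma=B_1\oplus B_2\oplus\cdots$ with $|B_k|=M_k$ exactly with partitions of $(R_i)$ into successive chunks of lengths $M_k$ such that $R_{M_1+\dots+M_{k-1}+i}\le M_k-i+1$ for every $i\in[M_k]$. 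In particular, the first block length equals
\begin{equation*}
T_1 := \inf\bigl\{M\ge1 \;:\; R_i\le M-i+1 \text{ for all } i\le M\bigr\}.
\end{equation*}

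First I would show that $T_1<\infty$ almost surely. A convenient device is the auxiliary process $Z_M := \max_{i\le M}(R_i+i)-M$, which takes values in $\N^*$, is a Markov chain with transition $Z_{M+1}=\max(Z_M-1,R_{M+1})$, and for which $T_1$ is the first hitting time of state $1$. Since $R_1$ has finite mean $1/(1-q)$, a short computation yields $\E[Z_{M+1}-Z_M\mid Z_M=z]=-1+q^{z-1}/(1-q)\to -1$ as $z\to\infty$, so the chain is positive recurrent by Foster's criterion (with Lyapunov function $V(z)=z$), and $T_1<\infty$ a.s.

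Next I would use the i.i.d.~structure of $(R_i)$ to obtain i.i.d.~blocks. Setting $T_0:=0$ and recursively $T_{k+1}:=T_k+T_1\circ\theta_{T_k}$, where $\theta$ denotes the shift on $(R_i)$, each $T_k$ is an a.s.~finite stopping time for the filtration $(\sigma(R_1,\dots,R_M))_{M\ge1}$. The strong Markov property for i.i.d.~sequences then gives that $(R_{T_k+i})_{i\ge1}$ is independent of $\sigma(R_1,\dots,R_{T_k})$ with the same law as $(R_i)_{i\ge1}$. Consequently the block sizes $|B_k|=T_k-T_{k-1}$ are i.i.d., and since $B_k$ is a deterministic function of the chunk $(R_{T_{k-1}+1},\dots,R_{T_k})$, the blocks themselves are i.i.d.~too.

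Finally, to identify the conditional law of $B_1$ given $|B_1|=M$, I would condition the i.i.d.~$\Geom{1-q}$ vector $(R_1,\dots,R_M)$ on the event $A_M=\{R_i\le M-i+1\,\forall i\le M\}$: a direct product computation yields independent $\TGeom{M-i+1}{1-q}$ variables, which by \Cref{lem: left- and right-inversion numbers of finite Mallows} are precisely the right-inversion counts of $\tau_M:=\pat{[M]}{\Sigma}\sim\Mallows{M}{q}$ (and, under $A_M$, $\tau_M$ is exactly the first block of $\Sigma$). Further conditioning on $\{T_1=M\}\subseteq A_M$ corresponds, via \Cref{lem: CNS right-inversion decomposable finite}, to requiring that $\tau_M$ be indecomposable. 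Hence $B_1$ given $|B_1|=M$ has the law of $\Mallows{M}{q}$ conditioned to be indecomposable, as claimed. The main obstacle in this plan is the first step: the a.s.~finiteness of $T_1$ requires the drift analysis above; a possible alternative would be to use that $\P(A_M)=\prod_{j=1}^M(1-q^j)$ stays bounded below by $\prod_{j\ge1}(1-q^j)>0$ to get $\P(T_1<\infty)>0$, and then to upgrade to probability one via a regenerative zero-one argument on the tail.
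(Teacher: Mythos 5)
Your proposal is correct, and it is genuinely more self-contained than what the paper does: the paper does not prove this lemma at all, but imports it from the literature (citing \cite{PT19} and the discussion in \cite{BB17}). Your argument — realising $\Sigma$ through the i.i.d.\ $\Geom{1-q}$ right-inversion counts, characterising prefix splits via \Cref{lem: CNS right-inversion decomposable infinite}, showing the first splitting index $T_1$ is an a.s.\ finite stopping time, regenerating by the strong Markov property, and identifying the conditional law of $B_1$ given $|B_1|=M$ by conditioning the geometric vector on $A_M$ (giving independent $\TGeom{M-i+1}{1-q}$, hence $\Mallows{M}{q}$ by \Cref{lem: left- and right-inversion numbers of finite Mallows}) and then on indecomposability via \Cref{lem: CNS right-inversion decomposable finite} — is essentially the standard renewal proof underlying those references, and every step checks out: $Z_{M+1}=\max(Z_M-1,R_{M+1})$ is a correct recursion, the drift $-1+q^{z-1}/(1-q)$ is computed correctly, and the two-stage conditioning yields exactly the law proportional to $q^{\inv{\sigma}}$ on indecomposable $\sigma\in\S_M$. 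Two minor remarks. First, Foster's criterion is heavier machinery than needed: your sketched alternative can be closed cleanly with Lévy's $0$--$1$ law, since $\prob{T_1<\infty \mid R_1,\dots,R_M}\ge \prod_{j\ge1}(1-q^j)>0$ for every $M$ (choose $M'\ge \max_{i\le M}(R_i+i-1)$ and ask the fresh variables to satisfy $A_{M'}$), so the a.s.\ limit $\One{T_1<\infty}$ is bounded below by a positive constant and must equal $1$. Second, the parenthetical \enquote{under $A_M$, $\tau_M$ is exactly the first block of $\Sigma$} is loose: under $A_M$ alone, $\tau_M$ is a prefix that splits off but need not be indecomposable; it is the first block only on $\{T_1=M\}$, which is precisely what your subsequent conditioning supplies, so the logic of the proof is unaffected.
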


With the notation of the above lemma, we say that $B_1$ is a {\em Mallows block with parameter $q$}, and we denote its distribution by $\MallowsBlock{q}$.
In \cite{H22}, the random permutations $(B_k)_{k\ge1}$ are called the {\em excursions} of $\Sigma$.
The law of $\abs{B_1}$ is described in \cite[Corollary~5.1~(iv)]{PT19} and rather intricate, but according to \cite[Proposition~4.6]{BB17} it admits some exponential moments, and thus finite moments of all orders (see also \cite[Proposition~3.4]{H22}).

As a consequence of \Cref{lem: infinite Mallows is regenerative}, the sub-permutations $\tau_n = \pat{[n]}{\Sigma}$ are in a sense \enquote{almost regenerative}.
Indeed, let $K_n := \max\left\{ k\ge0 : \abs{B_1} +\dots+ \abs{B_k} \le n \right\}$.
Then $\tau_n$ can be written as
\begin{equation}\label{eq: tau as sum of blocks and remainder}
    \tau_n = B_1 \oplus\dots\oplus B_{K_n} \oplus \sigma_n
\end{equation}
for some pattern $\sigma_n$ of $B_{K_n+1}$.
Since the random variable $\abs{B_1}$ is $L^p$ for all $p>0$, it is well-known that $K_n$ satisfies a law of large numbers and a central limit theorem (see e.g.~\cite[Theorems 3.7, 3.8, 3.16]{J18}):

\begin{proposition}\label{th: LLN and CLT for renewal time}
    We have the following convergence as $n\to\infty$:
    \begin{equation*}
        \frac{K_n}{n} 
        \,\overset{a.s.}{\longrightarrow}\, 
        \frac{1}{\expec{\abs{B_1}}}
        \qquad;\qquad
        \frac{K_n - n / \expec{\abs{B_1}}}{n^{1/2}} 
        \,\overset{\text{law}}{\longrightarrow}\, 
        \Normal{0}{\frac{\var{\abs{B_1}}}{\expec{\abs{B_1}}^3}} ,
    \end{equation*}
    where $B\sim\MallowsBlock{q}$.
    Moreover, this holds with convergence of all moments.
\end{proposition}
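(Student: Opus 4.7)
The plan is to recognize that $(K_n)_{n \ge 1}$ is precisely a renewal counting process associated with the i.i.d.\ positive integer sequence $(|B_k|)_{k \ge 1}$, so the result reduces to well-known renewal theory once the moment hypothesis on $|B_1|$ is verified. Since we are told $|B_1|$ admits exponential moments (via \cite{BB17,H22}), in particular all polynomial moments are finite, and $|B_1| \ge 1$ deterministically so $\expec{\abs{B_1}} \in (0,\infty)$. This places us squarely in the setting of \cite[Chapter~3]{J18}, and the cited Theorems 3.7, 3.8, 3.16 apply directly. Let me outline the standard argument to make the paper self-contained.

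\textbf{Almost sure convergence.} Set $S_k := |B_1| + \dots + |B_k|$. Since $|B_k| \ge 1$ a.s., we have $S_k \to \infty$, hence $K_n \to \infty$ a.s.\ as $n \to \infty$. By the definition of $K_n$,
\[
S_{K_n} \le n < S_{K_n + 1} ,
\]
so dividing by $K_n$ yields
\[
\frac{S_{K_n}}{K_n} \le \frac{n}{K_n} < \frac{S_{K_n+1}}{K_n+1} \cdot \frac{K_n+1}{K_n} .
\]
The strong law of large numbers gives $S_k / k \to \expec{\abs{B_1}}$ a.s., so both bounds converge a.s.\ to $\expec{\abs{B_1}}$, giving $n/K_n \to \expec{\abs{B_1}}$ and therefore $K_n/n \to 1/\expec{\abs{B_1}}$ a.s.

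\textbf{Central limit theorem.} This is the classical inversion argument. Write $\mu := \expec{\abs{B_1}}$, $\sigma^2 := \var{\abs{B_1}}$ and set $a_n := n/\mu$. For fixed $x \in \R$, the key observation is the duality $\{K_n < k\} = \{S_k > n\}$, so with $k_n(x) := \lceil a_n + x\sqrt{n}\, \sigma/\mu^{3/2} \rceil$,
\[
\prob{ \frac{K_n - a_n}{\sqrt n} < \frac{x \sigma}{\mu^{3/2}} } = \prob{ S_{k_n(x)} > n }  = \prob{ \frac{S_{k_n(x)} - k_n(x)\mu}{\sigma \sqrt{k_n(x)}} > \frac{n - k_n(x)\mu}{\sigma \sqrt{k_n(x)}} } .
\]
Expanding, $(n - k_n(x)\mu)/(\sigma\sqrt{k_n(x)}) \to -x$ as $n \to \infty$, and the ordinary CLT applied to $S_{k_n(x)}$ gives convergence of the left-hand side to $\prob{\Normal{0}{1} > -x} = \prob{\Normal{0}{1} < x}$, yielding the stated Gaussian limit with variance $\sigma^2/\mu^3$.

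\textbf{Convergence of moments.} This is the step most worth spelling out: one needs uniform integrability of $((K_n - a_n)/\sqrt n)^p$ for every $p > 0$. The exponential moments of $|B_1|$ supply Bernstein/Hoeffding-type tail bounds for $S_k$, namely $\prob{ |S_k - k\mu| \ge t\sqrt k } \le 2 \exp(-ct^2)$ uniformly for $t$ up to order $\sqrt{k}$. Translating through the duality, this yields sub-Gaussian (indeed sub-exponential) tail bounds on $(K_n - a_n)/\sqrt n$ uniformly in $n$, and in particular uniform boundedness of all its moments. Combined with the weak convergence established above, this gives convergence of all moments. The main obstacle is really just this last point: carefully writing out the tail bound to secure uniform integrability, since everything else is direct from the SLLN and CLT applied to $S_k$.
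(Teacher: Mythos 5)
Your proof is correct and follows essentially the same route as the paper, which simply invokes the classical renewal-theory results (LLN, CLT, and moment convergence for the counting process of the i.i.d.\ block sizes) cited from \cite{J18}, the hypotheses being met since $\abs{B_1}\ge1$ has finite (indeed exponential) moments. Your write-up just makes the standard sandwich, duality $\{K_n<k\}=\{S_k>n\}$, and uniform-integrability arguments explicit rather than citing them.
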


\subsection{Pattern-joint CLT}\label{sec: proofs pattern-joint CLT q fixed}

This section is devoted to the proofs of \Cref{th: LLN and CLT for joint patterns with fixed q}, \Cref{prop: non-degeneracy of CLT and rank of matrix}, and \Cref{th: CLT for identity pattern with fixed q}.

\begin{proof}[Proof of \Cref{th: LLN and CLT for joint patterns with fixed q}]
    We will use \Cref{eq: tau as sum of blocks and remainder}, and the same techniques as in \cite{J23_forest}.
    Throughout the proof, $B$ denotes a generic $\MallowsBlock{q}$ random variable.

    First, fix a pattern $\pi = \pi_1\oplus\dots\oplus\pi_d\in\S$.
    Observe that any occurrence of a block $\pi_j$ in $\tau_n$ is wholly contained in a single term of the sum $\tau_n = B_1 \oplus\dots\oplus B_{K_n} \oplus \sigma_n$.
    Define $\occinj{\pi}{\tau_n}$ as the number of occurences of $\pi$ in $\tau_n$ where all $\pi_j$'s are contained in distinct $B_k$'s, that is:
    \begin{equation}\label{eq: writing single occinj as asymmetric U-stat}
        \occinj{\pi}{\tau_n}
        := \sum_{1\le k_1<\dots<k_d\le K_n} \prod_{j=1}^d \occ{\pi_j}{B_{k_j}} .
    \end{equation}
    It will be sufficient to prove the desired result for $\occinj{\pi}{\tau_n}$ , after showing that it approximates $\occ{\pi}{\tau_n}$ well.
    For any occurrence of $\pi$ in $\tau_n$ which does not contribute to $\occinj{\pi}{\tau_n}$, either some $\pi_{i}$ appears in the same $B_k$ as $\pi_{i+1}$, or $\pi_d$ appears in $B_{K_{n}+1}$ (recall that $\sigma_n$ is a sub-permutation of $B_{K_n+1}$).
    The first type of contribution is bounded by
    \begin{align*}
        {\rm Occ}^-_1
        \,\le\, \sum_{i=1}^{d-1} \, \sum_{1\le k_j\le K_n ,\, j\ne i} \occ{\pi_{i}}{B_{k_{i+1}}} \prod_{j\ne i} \occ{\pi_j}{B_{k_j}} 
        \,\le\, \sum_{i=1}^{d-1} \, \sum_{1\le k_j\le n ,\, j\ne i} \, \prod_{j\ne i} \abs{B_{k_j}}^{\abs{\pi}} ,
    \end{align*}
    while the second one is bounded by
    \begin{multline*}
        {\rm Occ}^-_2
        \,\le\, \sum_{\ell=1}^d \left( \occ{\pi_\ell \oplus\dots\oplus \pi_d}{B_{K_n+1}} \sum_{1\le k_j\le K_n ,\, j<\ell} \, \prod_{j=1}^{\ell-1} \occ{\pi_j}{B_{k_j}} \right) \\
        \le d\, \abs{B_{K_n+1}}^{\abs{\pi}} \sum_{1\le k_j\le n ,\, j<d} \, \prod_{j=1}^{d-1} \abs{B_{k_j}}^{\abs{\pi}} .
    \end{multline*}
    Recall that $(\abs{B_k})_{k\ge1}$ is a sequence of i.i.d.~random variables with 
    finite moments of all orders \cite[Proposition~4.6]{BB17}.
    Moreover, $\abs{B_{K_n+1}}$ is asymptotically distributed as the size-bias distribution of $\abs{B}$ (see e.g.~\cite[Lemma~3.6]{H22} and the discussion above), thus its moments are all finite and bounded over $n$.
    Therefore:
    \begin{align*}
        \expec{\abs{ \occ{\pi}{\tau_n} \m \occinj{\pi}{\tau_n} }}
        \le \expec{{\rm Occ}^-_1 + {\rm Occ}^-_2} 
        \le \Landau{\O}{}{n^{d-1}} ,
    \end{align*}
    and more generally, for any $p>0$:
    \begin{equation*}
        \expec{\abs{ \occ{\pi}{\tau_n} \m \occinj{\pi}{\tau_n} }^p}
        \le \expec{\left( {\rm Occ}^-_1 + {\rm Occ}^-_2 \right)^p}
        \le \Landau{\O}{}{\left( n^{d-1} \right)^p} .
    \end{equation*}
    Thus it suffices to prove \Cref{th: LLN and CLT for joint patterns with fixed q} with $\occ{\pi}{\tau_n}$ replaced by $\occinj{\pi}{\tau_n}$.

    Now, fix $d\ge1$ and a real vector $\Lambda = \left( \lambda_\pi \right)_{\pi=\pi_1\oplus\dots\oplus\pi_d}$ with finite support.
    Using \eqref{eq: writing single occinj as asymmetric U-stat}, we can write
    \begin{multline*}
        \sum_{\pi = \pi_1\oplus\dots\oplus\pi_d} \lambda_\pi \occinj{\pi}{\tau_n}
        = \sum_{1\le k_1<\dots<k_d \le K_n} \, \sum_{\pi=\pi_1\oplus\dots\pi_d} \lambda_\pi \prod_{j=1}^d \occ{\pi_j}{B_{k_j}}
        \\= \sum_{1\le k_1<\dots<k_d \le K_n} f_\Lambda\left( B_{k_1},\dots,B_{k_d} \right)
    \end{multline*}
    One may thus recognize an asymmetric $U$-statistic stopped at some random stopping time, as studied in \cite{J18}.
    Using \cite[Theorems 3.9 and 3.11]{J18} (beware the notation), we deduce that
    \begin{equation*}
        \frac{1}{n^d} \sum_{\pi=\pi_1 \oplus\dots\oplus \pi_d} \lambda_\pi \occinj{\pi}{\tau_n} 
        \,\cv{}\, 
        \frac{1}{d!} \sum_{\pi=\pi_1 \oplus\dots\oplus \pi_d} \lambda_\pi \prod_{j=1}^d \frac{\expec{\occ{\pi_j}{B}}}{\expec{\abs{B}}}
    \end{equation*}
    almost surely, which concludes the proof of \eqref{eq: LLN q fixed}, and
    \begin{equation}\label{eq: cv distrib of combinations q fixed}
        \frac{ \sum_{\pi=\pi_1 \oplus\dots\oplus \pi_d} \lambda_\pi \occinj{\pi}{\tau_n} - \binom{n}{d} \sum_{\pi=\pi_1 \oplus\dots\oplus \pi_d} \lambda_\pi e_{\pi,q} }{n^{d-1/2}} 
        \,\cv{}\, 
        \Normal{0}{\gamma_\Lambda^2}
    \end{equation}
    in distribution, for some $\gamma_\Lambda\ge0$.
    This also holds with convergence of all moments by \cite[Theorem 3.17]{J18}.
    To describe the asymptotic variance $\gamma_\Lambda^2$, we first need to describe the (non-centered) univariate projections:
    \begin{equation}\label{eq: univariate projections q fixed}
        f_{\Lambda,j}(B) := \expec{ f_\Lambda\left(B_1,\dots,B_d\right) \,\big|\, B_j=B } 
        = \sum_{\pi=\pi_1 \oplus\dots\oplus \pi_d} \lambda_\pi \frac{\occ{\pi_j}{B}}{\expec{\occ{\pi_j}{B}}} \prod_{k=1}^d \expec{\occ{\pi_k}{B}} ,
    \end{equation}
    for $j\in[d]$.
    Note that $\expec{\occ{\pi}{B}}>0$ for all $\pi\in\S$ and $q\in(0,1)$, since the support of $B$ is the set of indecomposable permutations, and any given pattern is contained in at least one indecomposable permutation.
    Using \cite[Equation~(3.28)]{J18}:
    \begin{align}\label{eq: formula mixed variance q fixed}
        \gamma_\Lambda^2
        = &\expec{\abs{B}}^{1-2d}
        \sum_{1\le i,j\le d}
        \binom{i\p j\m 2}{i\m 1} \binom{2d\m i\m j}{d\m i} \frac{1}{(2d - 1)!}
        \cov{f_{\Lambda,j}(B)}{f_{\Lambda,i}(B)} \nonumber
        \\&- \frac{2\expec{\abs{B}}^{-2d}}{(d-1)!d!}
        \left( \sum_{\pi=\pi_1 \oplus\dots\oplus \pi_d} \lambda_\pi \prod_{k=1}^d \expec{\occ{\pi_k}{B}}\right)
        \sum_{i=1}^d \cov{\abs{B}}{f_{\Lambda,i}(B)} \nonumber
        \\&+ \frac{\expec{\abs{B}}^{-2d-1}}{(d-1)!^2}
        \left( \sum_{\pi=\pi_1 \oplus\dots\oplus \pi_d} \lambda_\pi \prod_{k=1}^d \expec{\occ{\pi_k}{B}} \right)^2
        \var{\abs{B}} .
    \end{align}
    Substituting in \eqref{eq: univariate projections q fixed} and developing, we can write $\gamma_\Lambda^2 = \Lambda^{\rm T} \Gamma^{(d)} \Lambda$ where, for all patterns $\pi=\pi_1 \oplus\dots\oplus \pi_d$ and $\rho=\rho_1 \oplus\dots\oplus \rho_d$:
    \begin{align}\label{eq: formula covariance matrix q fixed}
        \Gamma_{\pi,\rho}^{(d)}
        = \expec{\abs{B}} e_{\pi,q} e_{\rho,q}\Bigg(
        &\frac{1}{(2d-1)!}
        \sum_{1\le i,j\le d}
        \binom{i\p j\m 2}{i\m 1} \binom{2d\m i\m j}{d\m i} \frac{\cov{\occ{\pi_j}{B}}{\occ{\rho_i}{B}}}{\expec{\occ{\pi_j}{B}} \expec{\occ{\rho_i}{B}}} \nonumber
        \\&- \frac{1}{(d-1)!d!}
        \sum_{i=1}^d \left( \frac{\cov{\abs{B}}{\occ{\rho_i}{B}}}{\expec{\abs{B}} \expec{\occ{\rho_i}{B}}} + \frac{\cov{\abs{B}}{\occ{\pi_i}{B}}}{\expec{\abs{B}} \expec{\occ{\pi_i}{B}}} \right) \nonumber
        \\&+ \frac{1}{(d-1)!^2}
        \frac{\var{\abs{B}}}{\expec{\abs{B}}^2} \Bigg) .
    \end{align}
    \Cref{eq: CLT joint q fixed} then follows from \Cref{eq: cv distrib of combinations q fixed} along with the Cram\'er--Wold theorem.

    Finally, to prove convergence of all joint moments in \eqref{eq: CLT joint q fixed}, by uniform integrability it suffices to argue that all joint moments are bounded as $n\to\infty$.
    This simply follows from H\"older's inequality, and the fact that \eqref{eq: cv distrib of combinations q fixed} holds with convergence of all moments.
\end{proof}

\begin{remark}\label{rem: explicit formula for e with single inversion}
    For $\pi=2\,1$, it is known that $\expec{\inv{\tau_n}} \sim \frac{q}{1-q} n$ as $n\to\infty$ \cite[Proposition~29]{Bevan_Threlfall_2024}.
    This shows the equality 
    \[
    e_{2\,1,q} = \frac{\expec{\inv{B}}}{\expec{\abs{B}}} = \frac{q}{1-q} \,,
    \]
    for which we do not have a direct proof.
    Since $e_{\pi,q} = \prod_j e_{\pi_j,q}$, this also implies explicit formulas for patterns whose blocks are only points or inversions, e.g.~$e_{1\,3\,2 , q} = e_{2\,1\,3 , q} = \frac{q}{1-q}$.
    For other patterns $\pi$, computing $e_{\pi,q}$ seems difficult.
\end{remark}

Before computing the ranks of specific sub-matrices of $\Gamma^{(d)}$, we state and prove a simple lemma which will be useful in the proof of \Cref{prop: non-degeneracy of CLT and rank of matrix}.

\begin{lemma}\label{lem: rank system triangular}
    Fix an integer $d\ge2$.
    Consider the linear system
    \begin{equation}\label{eq: linear system triangular}
        \forall j\in[d],\quad
        \sum_{i<j} \lambda_{i,j} + \sum_{k>j} \lambda_{j,k} = 0 \tag{E}
    \end{equation}
    in the real variables $\left( \lambda_{i,k} \right)_{1\le i< k\le d}$.
    
    If $d=2$ then this system has rank $1$, and if $d\ge3$ then it has rank $d$.
\end{lemma}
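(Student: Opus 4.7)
The approach I would take is to recognize the coefficient matrix of \eqref{eq: linear system triangular} as the (unsigned) vertex--edge incidence matrix of the complete graph $K_d$: rows are indexed by $j\in[d]$, columns by pairs $\{i,k\}$ with $1\le i<k\le d$, and the entry is $1$ precisely when $j\in\{i,k\}$. The rank of this $d\times\binom{d}{2}$ matrix is then $d$ minus the dimension of its left kernel, so it suffices to determine which linear combinations of the $d$ equations give zero.

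The key step is to write down this dual condition explicitly. A combination $\sum_{j=1}^d c_j E_j = 0$ contributes the coefficient $c_i + c_k$ to each variable $\lambda_{i,k}$, so the left kernel is exactly
\[
\{(c_1,\dots,c_d)\in\R^d \,:\, c_i + c_k = 0 \text{ for all } 1\le i<k\le d\}.
\]
For $d=2$ this is the single condition $c_1+c_2=0$, yielding a one-dimensional kernel and rank $2-1=1$. For $d\ge3$, fix any three distinct indices $i,j,k\in[d]$: the relations $c_i+c_j=c_i+c_k=c_j+c_k=0$ combine as $c_i = \tfrac12\bigl((c_i+c_j)+(c_i+c_k)-(c_j+c_k)\bigr)=0$, and since $d\ge3$ this applies to every index, forcing all $c_\ell=0$. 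Hence the left kernel is trivial and the rank equals $d$.

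There is no substantial obstacle here; the argument is essentially a well-known fact about incidence matrices (the unsigned incidence matrix of a connected graph has full row rank if and only if the graph is non-bipartite, and $K_d$ is non-bipartite exactly when $d\ge3$). The main care is just to get the dichotomy $d=2$ versus $d\ge3$ from the right place, which is precisely the step where a triple of indices is needed to force each $c_\ell$ to vanish individually.
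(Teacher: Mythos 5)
Your proof is correct, but it takes a genuinely different route from the paper. The paper argues on the column side: it restricts the system to the $d$ variables $\lambda_{1,2},\lambda_{1,3},\dots,\lambda_{1,d}$ and $\lambda_{d-1,d}$ and checks by hand that the resulting $d\times d$ subsystem is invertible (the $d=2$ case being immediate), which directly exhibits full row rank. You instead work on the row side: you identify the coefficient matrix as the unsigned vertex--edge incidence matrix of $K_d$ and compute its left kernel, namely $\{c\in\R^d : c_i+c_k=0 \text{ for all } i<k\}$, which is one-dimensional for $d=2$ and trivial for $d\ge3$ via the triple identity $c_i=\tfrac12\bigl((c_i+c_j)+(c_i+c_k)-(c_j+c_k)\bigr)$; the rank then follows from $\mathrm{rank}=d-\dim(\text{left kernel})$. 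Both arguments are complete and short. Yours is more conceptual: it makes the dichotomy $d=2$ versus $d\ge3$ transparent as the bipartite/non-bipartite distinction for $K_d$, and it would generalize immediately to systems indexed by the edges of any connected graph. The paper's is more hands-on and self-contained, producing an explicit invertible minor without appealing to (or reproving) the incidence-matrix fact. Either would serve the application in the proof of \Cref{prop: non-degeneracy of CLT and rank of matrix}, which only uses the dimension of the solution space.
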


\begin{proof}
    The linear system \eqref{eq: linear system triangular} has $d$ equations and $\binom{d}{2}$ unknowns, thus we aim to prove that it has full rank.
    If $d=2$, this is immediate.
    If $d\ge3$, it suffices to find an invertible sub-system of size $d\times d$.
    The system \eqref{eq: linear system triangular} restricted to the variables $\lambda_{1,2}, \lambda_{1,3}, \dots, \lambda_{1,d}$ and $\lambda_{d-1,d}$ rewrites as:
    \begin{equation*}
        \sum_{k>1} \lambda_{1,k} = 0
        \;;\quad
        \lambda_{1,2} = 0
        \;;\quad
        \lambda_{1,3} = 0
        \;;\;\dots\;;\;
        \lambda_{1,d-2} = 0
        \;;\quad
        \lambda_{1,d-1} + \lambda_{d-1,d} = 0
        \;;\quad
        \lambda_{1,d} + \lambda_{d-1,d} = 0 ,
    \end{equation*}
    which is easily seen to be invertible.
    This concludes the proof.
\end{proof}

\begin{proof}[Proof of \Cref{prop: non-degeneracy of CLT and rank of matrix}]
    First, fix a pattern $\pi=\pi_1 \oplus\dots\oplus \pi_d$.
    As in the proof of \Cref{th: LLN and CLT for joint patterns with fixed q}, we can use the results of \cite{J18}.
    In this case, the real vector is set to $\Lambda := \left( \One{\rho=\pi} \right)_{\rho=\rho_1 \oplus\dots\oplus \rho_d}$ and the uncentered univariate projections of \eqref{eq: univariate projections q fixed} are given by $f_{\Lambda,j}(B) = \frac{\occ{\pi_j}{B}}{\expec{\occ{\pi_j}{B}}} \prod_{k=1}^d \expec{\occ{\pi_k}{B}}$ for $j\in[d]$.
    Therefore by \cite[Equation~(3.29)]{J18}, $\Gamma_{\pi,\pi}^{(d)} = 0$ if and only if for all $j\in[d]$:
    \begin{equation*}
        \expec{\abs{B}} 
        \left( \frac{\occ{\pi_j}{B}}{\expec{\occ{\pi_j}{B}}} -1 \right) \prod_{k=1}^d \expec{\occ{\pi_k}{B}} 
        = \left(\big. \abs{B} - \expec{\abs{B}} \right) 
        \prod_{k=1}^d \expec{\occ{\pi_k}{B}} 
    \end{equation*}
    almost surely.
    Since $\prod_{k=1}^d \expec{\occ{\pi_k}{B}} >0$, this happens if and only if for all $j\in[d]$:
    \begin{equation*}
        \frac{\occ{\pi_j}{B}}{\expec{\occ{\pi_j}{B}}} = \frac{\abs{B}}{\expec{\abs{B}}} 
    \end{equation*}
    almost surely.
    If $\pi$ is not the identity permutation, then at least one of its blocks $\pi_j$ has size at least $2$.
    Since $\prob{\abs{B}=1} = \prob{\Sigma(1)=1} = 1-q >0$, the event $\left\{ \occ{\pi_j}{B}=0 \right\}$ then occurs with positive probability.
    Since $\prob{B=\pi_j} >0$, the event $\left\{ \occ{\pi_j}{B}>0 \right\}$ also occurs with positive probability.
    Hence $\Gamma_{\pi,\pi}^{(d)} > 0$ in this case.
    On the other hand, if $\pi = \id_d$ then $\abs{\pi_j}=1$ for all $j\in[d]$, therefore $\occ{\pi_j}{B} = \abs{B}$ a.s.~and this implies $\Gamma_{\id,\id}^{(d)} = 0$.
    This concludes the proof of the first claim, about (non-)degeneracy of the marginal CLT.

    For any $r\ge d$, we can compute the rank of the finite matrix $\Gamma^{(d,r)}$ by describing its kernel.
    Fix a real vector $\Lambda = \left( \lambda_\pi \right)_{\pi = \pi_1 \oplus\dots\oplus \pi_d \in\S_r}$.
    Then $\Lambda^{\rm T}\Gamma^{(d,r)}\Lambda = \gamma_\Lambda^2$ is given by \Cref{eq: formula mixed variance q fixed}.
    As before, by \cite[Equation~(3.29)]{J18}, $\gamma_\Lambda = 0$ if and only if for all $j\in[d]$:
    \begin{equation}\label{eq: CNS kernel covariance matrix}
        f_{\Lambda,j}(B)
        = \abs{B} \,
        \frac{\expec{f_{\Lambda,j}(B)}}{\expec{\abs{B}}}
    \end{equation}
    almost surely.
    Before studying this relation in more detail, let us rewrite the univariate projection $f_{\Lambda,j}$ of \eqref{eq: univariate projections q fixed} as:
    \begin{align*}
        f_{\Lambda,j}(B)
        &= \sum_{\pi=\pi_1 \oplus\dots\oplus \pi_d \in\S_r} \lambda_\pi
        \frac{\occ{\pi_j}{B}}{\expec{\occ{\pi_j}{B}}} \prod_{k=1}^d \expec{\occ{\pi_k}{B}} 
        \\&= \sum_{\rho\in\S} \, \sum_{\pi=\pi_1 \oplus\dots\oplus \pi_d \in\S_r ,\, \pi_j=\rho} \lambda_\pi
        \frac{\occ{\rho}{B}}{\expec{\occ{\rho}{B}}} \prod_{k=1}^d \expec{\occ{\pi_k}{B}} 
        \\&= \sum_{\rho\in\S} \lambda_{\rho,j} \, \occ{\rho}{B} 
    \end{align*}
    where, for any $j\in[d]$ and $\rho\in\S$:
    \begin{equation}\label{eq: def lambda_rho_j for kernel}
        \lambda_{\rho,j}
        = \sum_{\pi=\pi_1 \oplus\dots\oplus \pi_d \in\S_r ,\, \pi_j=\rho} \lambda_\pi
        \prod_{k\ne j} \expec{\occ{\pi_k}{B}} .
    \end{equation}
    Note that $\lambda_{\rho,j} = 0$ whenever $\abs{\rho} > r-d+1$.
    Let us consider two complementary cases:
    suppose that $\lambda_{\rho,j}=0$ for all $j\in[d]$ and all $\rho\in\S$ such that $\abs{\rho}\ge2$.
    Then $f_{\Lambda,j}(B) = \lambda_{1,j} \abs{B}$ a.s.~for all $j\in[d]$, thus \eqref{eq: CNS kernel covariance matrix} is satisfied and $\gamma_\Lambda = 0$.
    On the other hand, suppose that $\lambda_{\rho_0,j}\ne0$ for some $j\in[d]$ and some $\rho_0\in\S$ such that $\abs{\rho_0}\ge2$, and take such a $\rho_0$ with minimum size.
    The event $\left\{ B=1 ,\; f_{\Lambda,j}(B) = \lambda_{1,j} \abs{B} \right\}$ has positive probability, and the event $\left\{ B=\rho_0 ,\; f_{\Lambda,j}(B) = \lambda_{1,j} \abs{B} + \lambda_{\rho_0,j} \right\}$ as well.
    Hence the ratio $f_{\Lambda,j}(B) / \abs{B}$ is not constant a.s., so \eqref{eq: CNS kernel covariance matrix} does not hold and we deduce $\gamma_\Lambda > 0$.
    
    The above discussion shows that $\gamma_\Lambda = 0$ if and only if $\lambda_{\rho,j}=0$ for all $j\in[d]$ and all $\rho\in\S$ such that $\abs{\rho}\ge2$, where $\lambda_{\rho,j}$ is defined by \eqref{eq: def lambda_rho_j for kernel}.
    We can use this criterium to study the rank of $\Gamma^{(d,r)}$.
    \begin{itemize}
        \item If $d=r$, then the only pattern in $\S_r$ wih $d$ blocks is the identity permutation. 
        It has already been established that $\Gamma^{(r,r)}_{\id,\id} = 0$.
        \item If $d=r-1$, then there are exactly $r-1$ patterns in $\S_r$ with $d$ blocks: for each $j\in[r-1]$, the adjacent transposition $\pi^{(j)}$ which swaps $j$ with $j+1$.
        The only non-trivial block of $\pi^{(j)} = \pi^{(j)}_1 \oplus\dots\oplus \pi^{(j)}_d$ is $\pi^{(j)}_j = 2\,1$.
        Therefore we simply have $\lambda_{2 1 , j} = \lambda_{\pi^{(j)}} \expec{\abs{B}}^{r-2}$ for all $j\in[d]$.
        By the above discussion, $\Lambda^{\rm T}\Gamma^{(r-1,r)}\Lambda = 0$ if and only if $\lambda_{\pi^{(j)}} = 0$ for all $j\in[d]$, that is if $\Lambda$ is the null vector.
        Hence $\Gamma^{(r-1,r)}$ is invertible.
        \item The case $d=1$ corresponds to indecomposable permutations in $\S_r$.
        \Cref{eq: def lambda_rho_j for kernel} then becomes trivial:
        for each indecomposable $\pi\in\S_r$, $\lambda_{\pi,1} = \lambda_\pi$.
        Therefore $\Lambda^{\rm T}\Gamma^{(1,r)}\Lambda = 0$ if and only if $\Lambda$ is the null vector, and thus $\Gamma^{(1,r)}$ is invertible.
        \item If $d=r-2 \ge2$, then there are two types of patterns in $\S_r$ with $d$ blocks:
        for each $1\le i<j \le d$, the bi-adjacent transposition $\pi^{(i,j)}$ which swaps $i$ with $i+1$ and $j+1$ with $j+2$, whose only non-trivial blocks are $\pi^{(i,j)}_i = \pi^{(i,j)}_j = 2\,1$;
        and for each $j\in[d]$ and $\rho\in \{ 231,312,321 \}$, the permutation $\pi^{(\rho,j)}$ whose only non-trivial block is $\pi^{(\rho,j)}_j = \rho$.
        Thus for each $j\in[d]$:
        \begin{equation*}
            \left\{
            \begin{array}{ll}
                \lambda_{\rho,j} = \lambda_{\pi^{(\rho,j)}} \expec{\abs{B}}^{r-3} & \text{for all } \rho\in\{231,312,321\} ;\smallskip\\
                \lambda_{21,j} = \sum_{1\le i< j} \lambda_{\pi^{(i,j)}} \expec{\abs{B}}^{r-4} + \sum_{j< k\le d} \lambda_{\pi^{(j,k)}} \expec{\abs{B}}^{r-4} .
            \end{array}
            \right.
        \end{equation*}
        The equation $\Lambda^{\rm T}\Gamma^{(r-2,r)}\Lambda = 0$ is then equivalent to the following for all $j\in[d]$:
        \begin{equation*}
            \left\{
            \begin{array}{ll}
                \lambda_{\pi^{(\rho,j)}} = 0 & \text{for all } \rho\in\{231,312,321\} ;\smallskip\\
                \sum_{1\le i< j} \lambda_{\pi^{(i,j)}}  + \sum_{j< k\le d} \lambda_{\pi^{(j,k)}} = 0 .
            \end{array}
            \right.
        \end{equation*}
        If this holds then the variables $\left( \lambda_{\pi^{(i,j)}} \right)_{1\le i<j\le d}$ solve the linear system \eqref{eq: linear system triangular}.
        By \Cref{lem: rank system triangular}, the solution space of $\Lambda^{\rm T}\Gamma^{(r-2,r)}\Lambda = 0$ has dimension $\binom{d}{2} - d$ if $d\ge3$, and dimension $\binom{d}{2} - 1 = 0$ if $d=2$.

        There are $\binom{d}{2} + 3d$ patterns in $\S_r$ with $d=r-2$ blocks.
        If $d=2$ then the matrix $\Gamma^{d,r} = \Gamma^{2,4}$ is invertible, and has full rank $7$.
        If $d\ge3$ then the matrix $\Gamma^{d,r}$ has rank $\binom{d}{2} + 3d - \binom{d}{2} + d = 4d$; it is invertible only if $d=3$.
    \end{itemize}
    This discussion concludes the proof of \Cref{prop: non-degeneracy of CLT and rank of matrix}.
\end{proof}

\begin{proof}[Proof of \Cref{th: CLT for identity pattern with fixed q}]
    We have the following trivial identity:
    \begin{equation*}
        \sum_{\pi\in\S_r} \occ{\pi}{\tau_n} = \binom{n}{r} .
    \end{equation*}
    We can distinguish between three types of patterns in $\S_r$: the identity permutation; the adjacent transpositions (which have $r-1$ blocks); and the patterns which have at most $r-2$ blocks.
    The contribution of the latter to the above sum is bounded by \Cref{th: LLN and CLT for joint patterns with fixed q}, \Cref{eq: LLN q fixed}:
    \begin{equation*}
        \sum_{\pi = \pi_1 \oplus\dots\oplus \pi_d \in\S_r ,\, d\le r-2} \occ{\pi}{\tau_n} \le \Landau{\O_\P}{}{n^{r-2}} ,
    \end{equation*}
    whence:
    \begin{equation}\label{eq: decomposition increasing subsequences}
        \occ{\id_r}{\tau_n} = \binom{n}{r} - \sum_{\pi = \pi_1 \oplus\dots\oplus \pi_{r-1} \in\S_r} \occ{\pi}{\tau_n} - \Landau{\O_\P}{}{n^{r-2}} .
    \end{equation}
    The contribution of adjacent transpositions is given by the joint convergence in distribution of \Cref{th: LLN and CLT for joint patterns with fixed q}:
    \begin{equation}\label{eq: CLT sum adjacent transpositions}
        \frac{ \sum_{\pi = \pi_1 \oplus\dots\oplus \pi_{r-1} \in\S_r} \occ{\pi}{\tau_n} - \binom{n}{r-1} \sum_{\pi = \pi_1 \oplus\dots\oplus \pi_{r-1} \in\S_r} \prod_{j=1}^{r-1} \frac{\expec{\occ{\pi_j}{B}}}{\expec{\abs{B}}} }{n^{r-3/2}}
        \,\cv{n\to\infty}\,
        \Normal{0}{\Lambda^{\rm T}\Gamma^{(r-1)}\Lambda} ,
    \end{equation}
    where $\Lambda = \left( \One{\abs{\pi}=r} \right)_{\pi_1 \oplus\dots\oplus \pi_{r-1} \in\S}$.
    Since adjacent transpositions contain $r-2$ trivial blocks and one $2\,1$ block, the asymptotic mean simplifies as:
    \begin{equation*}
        \sum_{\pi = \pi_1 \oplus\dots\oplus \pi_{r-1} \in\S_r} \prod_{j=1}^{r-1} \frac{\expec{\occ{\pi_j}{B}}}{\expec{\abs{B}}}
        = (r-1) \frac{\expec{\inv{B}}}{\expec{\abs{B}}} .
    \end{equation*}
    It remains to compute the asymptotic variance $\Lambda^{\rm T} \Gamma^{(r-1)} \Lambda =: \gamma_r^2$.
    It is positive, since the matrix $\Gamma^{(r-1,r)}$ is invertible by \Cref{prop: non-degeneracy of CLT and rank of matrix}.
    Furthermore it can be expressed via \eqref{eq: univariate projections q fixed} and \eqref{eq: formula mixed variance q fixed}:
    \begin{equation*}
        f_{\Lambda,j}(B) 
        = \inv{B} \, \expec{\abs{B}}^{d-1} 
        + (d-1) \, \abs{B} \, \expec{\abs{B}}^{d-2} \, \expec{\inv{B}} 
    \end{equation*}
    for all $j\in[d]$, with $d:=r-1$.
    Subsequently:
    \begin{align*}
        \gamma_r^2
        = &\expec{\abs{B}}^{-3}
        \sum_{1\le i,j\le d}
        \binom{i\p j\m 2}{i\m 1} \binom{2d\m i\m j}{d\m i} \frac{1}{(2d - 1)!}
        \var{\big. \inv{B} \expec{\abs{B}} + (d-1) \abs{B} \expec{\inv{B}}} 
        \\&- \frac{2\expec{\abs{B}}^{-3}}{(d-1)!d!}
        d^2 \expec{\inv{B}} 
        \cov{\big. \abs{B}}{\inv{B} \expec{\abs{B}} + (d-1) \abs{B} \expec{\inv{B}}} 
        \\&+ \frac{\expec{\abs{B}}^{-3}}{(d-1)!^2}
        d^2 \expec{\inv{B}}^2
        \var{\abs{B}} .
    \end{align*}
    To simplify this expression we can use the formula $\sum_{1\le i,j\le d} \binom{i+ j- 2}{i- 1} \binom{2d- i- j}{d- i} = \frac{(2d-1)!}{(d-1)!^2}$, which can 
    e.g.~be obtained via Beta integrals, or combinatorially\footnote{
    Indeed, rewrite it as
    $$ \sum_{1\le i,j\le d} \binom{i+ j- 2}{i- 1} \binom{2d- i- j}{d- i} 
    = \sum_{i=1}^d \sum_{k=i}^{i+d-1} \binom{k-1}{i- 1} \binom{2d-1-k}{d-i} 
    = d \binom{2d-1}{d}
    = \frac{(2d-1)!}{(d-1)!^2} ,$$
    where the middle equality comes from counting pointed subsets $E$ of $[2d-1]$ with $d$ elements: $k$ is the pointed element of $E$, and $i-1$ is the number of elements in $E$ lower than $k$.
    }.
    Therefore:
    \begin{equation*}
        \gamma_r^2 
        = \expec{\abs{B}}^{-3} \left(\big.
        c_1 \var{\abs{B}}
        + c_2 \cov{\abs{B}}{\inv{B}}
        + c_3 \var{\inv{B}}
        \right)
    \end{equation*}
    where
    \begin{equation*}
        \left\{
        \begin{array}{lll}
        c_1 = \left(
        \frac{(d-1)^2}{(d-1)!^2}
        - \frac{2 d^2 (d-1)}{(d-1)!d!}
        + \frac{d^2}{(d-1)!^2} 
        \right) \expec{\inv{B}}^2 
        = \frac{1}{(d-1)!^2} \expec{\inv{B}}^2 ;\\
        c_2 = \left(
        \frac{2 (d-1)}{(d-1)!^2} 
        - \frac{2 d^2 }{(d-1)!d!}
        \right) \expec{\inv{B}} \expec{\abs{B}}
        = \frac{-2}{(d-1)!^2} \expec{\inv{B}} \expec{\abs{B}} ;\\
        c_3 = \frac{1}{(d-1)!^2} \expec{\abs{B}}^2 .
        \end{array}
        \right.
    \end{equation*}
    Finally, we can write
    \begin{equation}\label{eq: asymptotic variance increasing subsequences}
        \gamma_r^2 = \frac{\expec{\abs{B}}^{-3}}{(d-1)!^2} \var{\big. \expec{\inv{B}} \abs{B} - \expec{\abs{B}} \inv{B} }
    \end{equation}
    with $d=r-1$.
    Convergence of moments also holds as usual, and this concludes the proof.
\end{proof}

\section{Patterns in the continuous-time infinite Mallows process}

\subsection{The continuous-time infinite Mallows process}\label{sec: CTI Mallows}

In order to define a continuous-time coupling of $\Sigma_t \sim \Mallows{\N^*}{t}$ for $t\in[0,1)$, we use continuous-time geometric processes along with \Cref{lem: range of psi_infty}.
In \cite[Section~3.2]{AK24} this is done for the {\em bi-infinite} Mallows distribution (on $\Z$), which is trickier to construct than the \enquote{unilateral} version (on $\N^*$) considered here.

Say that a stochastic process $X:\R_+\to\N\cup\{\infty\}$ is a {\em birth process} if it is càdlàg, time-inhomogeneous Markov, and admits {\em infinitesimal birth rates} $p(j,t)\ge0$ such that:
\begin{equation}\label{eq: def birth rates}
    \text{for all } t\ge0 \text{ and } j\in\N, \quad
    \left\{
    \begin{array}{ll}
        \prob{ X_{t+h} = j \,\big|\, X_t=j} = 1 - h p(j,t) + o(h) ;\smallskip\\
        \prob{X_{t+h} = j\p1 \,\big|\, X_t=j} = h p(j,t) + o(h) ;
    \end{array}
    \right.
    \text{ as } h\to0.
\end{equation}
The law of $X$ is then uniquely determined by these birth rates along with the law of $X_0$.
Conversely, these birth rates are uniquely determined by the functions $t\mapsto\prob{X_t=j}$, $j\in\N$.
This follows from the Chapman--Kolmogorov forward equation (see e.g.~the proof of \cite[Proposition~3.5]{AK24}).

\begin{lemma}[\cite{AK24}]\label{lem: geometric birth process}
    Let $R:[0,1)\to \N^*$ be the birth process with $R_0=1$ and birth rates $p(j,t) = \frac{j}{1-t}$ for $j\in\N^*$, $t\in[0,1)$.
    Then $R$ is non-explosive (i.e.~it is a.s.~well-defined on $[0,1)$, and has finitely many jumps on any proper subinterval), and has marginal distributions $R_t \sim \Geom{1\m t}$ for all $t\in[0,1)$.
    
    Conversely, if $R:[0,1)\to \N^*$ is a birth process with marginal distributions $R_t \sim \Geom{1\m t}$ for all $t\in[0,1)$, then it has birth rates $p(j,t) = \frac{j}{1-t}$ for $j\in\N^*$, $t\in[0,1)$.
\end{lemma}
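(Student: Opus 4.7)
The plan is to reduce the forward direction to a classical result about the Yule (pure-birth) process, and then to deduce the converse from the uniqueness statement noted immediately before the lemma.

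For the forward direction, I would apply the deterministic time change $s = -\log(1-t)$, which maps $[0,1)$ bijectively onto $[0,\infty)$ with $ds/dt = 1/(1-t)$. The Jacobian exactly absorbs the time-dependent factor in the rates: the time-changed process $\widetilde{R}_s := R_{1-e^{-s}}$ is a pure-birth process with constant per-particle rate $\widetilde{p}(j) = j$, that is, a standard Yule process started from $\widetilde{R}_0 = 1$. It is classical that this Yule process is non-explosive (for instance, $\expec{\widetilde{R}_s} = e^s < \infty$ for every $s$, so Fatou's lemma gives $\widetilde{R}_s < \infty$ a.s., and the sequence of jump times is easily shown to diverge) and that $\widetilde{R}_s \sim \Geom{e^{-s}}$. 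Undoing the time change then yields simultaneously that $R$ is a.s.~well-defined with finitely many jumps on each $[0,t]$ with $t<1$, and that $R_t \sim \Geom{1-t}$.

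An alternative, more hands-on route would be to verify directly that the candidate $f_j(t) := (1-t) t^{j-1}$ satisfies the Chapman--Kolmogorov forward equation $\partial_t f_j(t) = \tfrac{j-1}{1-t} f_{j-1}(t) - \tfrac{j}{1-t} f_j(t)$ (both sides reduce to $(j-1) t^{j-2} - j t^{j-1}$), with initial condition $f_j(0) = \One{j=1}$. Non-explosion would then follow from the fact that $\sum_{j \ge 1} f_j(t) = 1$ for every $t \in [0,1)$, i.e.~that the marginal distributions of $R_t$ are honest probability measures rather than sub-probability ones.

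For the converse, I would simply invoke the uniqueness statement highlighted just before the lemma: since the infinitesimal birth rates of any birth process are uniquely determined by its pointwise marginals $t\mapsto\prob{X_t=j}$ through the forward equation, and since the forward direction exhibits rates $p(j,t) = j/(1-t)$ producing marginals $\Geom{1-t}$, any other birth process with those same marginals must share those rates.

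The main subtlety lies in non-explosion: both $j \mapsto p(j,t)$ and $t \mapsto p(j,t)$ blow up (as $j\to\infty$ and as $t\to 1$ respectively), so \emph{a priori} one could fear that $R_t = \infty$ with positive probability. The time-change reduction is what makes this painless, by transporting the question to a time-homogeneous Yule process for which non-explosion is textbook. Everything else is a routine forward-equation calculation.
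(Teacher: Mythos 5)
Your proposal is correct, but its main route differs from the one the paper leans on. The paper does not prove this lemma itself (it is imported from \cite{AK24}); the in-paper machinery — the discussion of birth rates and the Chapman--Kolmogorov forward equation just before the lemma, and the proof of \Cref{lem: increment law} — works by solving the forward system directly, which is exactly your ``alternative'' route: one checks that $f_j(t)=(1-t)t^{j-1}$ solves the triangular system with $f_j(0)=\One{j=1}$, notes that the system can be solved recursively in $j$ so this is the unique solution and hence the law of the minimal process, and concludes honesty (hence non-explosion) from $\sum_j f_j(t)=1$; that last identification via recursive uniqueness is the one small point your sketch leaves implicit. Your primary route, the deterministic time change $s=-\log(1-t)$ turning $R$ into a standard Yule process with per-particle rate $j$, is genuinely different and buys non-explosion essentially for free from the classical criterion $\sum_j 1/j=\infty$ (or your expectation argument), together with the textbook geometric marginals $\Geom{e^{-s}}$, which transport back to $\Geom{1-t}$; the forward-equation route, on the other hand, is the one that generalizes directly to the conditioned/started-at-$k$ situation the paper needs later in \Cref{lem: increment law}. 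Your treatment of the converse coincides with the paper's: rates are recovered from the marginals through the forward equation (using that $\prob{R_t=j}>0$ for $t\in(0,1)$), which is precisely the uniqueness remark the paper makes before the lemma with reference to \cite[Proposition~3.5]{AK24}.
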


The process $R$ of \Cref{lem: geometric birth process} shall be refered to as the {\em geometric birth process}.
We can now state an infinite analog of \cite[Theorem~1.1]{C22}.
Note that the authors of \cite{C22} and \cite{AK24} chose to work with {\em left}-inversion counts, but for the purpose of this paper, {\em right}-inversion counts seem better adapted.
In particular, we do not know if an analog of \Cref{th: CTI Mallows process as sum of blocks} would hold with left-inversions.

\begin{theorem}\label{th: continuous-time infinite Mallows process}
    There exists a unique time-inhomogeneous Markov process $\Sigma :[0,1) \to \S_\infty$ such that:
    \begin{enumerate}
        \item\label{item: marginals of infinite continous Mallows} for all $t\in[0,1)$, $\Sigma_t$ has marginal distribution $\Mallows{\N^*}{t}$;
        \item\label{item: right-inversions of infinite continous Mallows} the right-inversion counts $r_i(\Sigma) : [0,1)\to\N$, $i\ge1$ are independent birth processes.
    \end{enumerate}
    Furthermore, we can write $r_i(\Sigma_t) = R_{i,t}\m1$ for all $i\ge1$ and $t\in[0,1)$, where $\left( R_{1,t} \right)_{t\in[0,1)}, \left( R_{2,t} \right)_{t\in[0,1)}, \dots$ are i.i.d.~geometric birth processes as defined in \Cref{lem: geometric birth process}.
\end{theorem}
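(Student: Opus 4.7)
The plan is to prove existence by explicit construction — using the geometric birth processes supplied by Lemma~\ref{lem: geometric birth process} — and then deduce uniqueness by showing that the two defining properties pin down the joint law of the right-inversion counts. Let $\left( R_{i,\cdot} \right)_{i\ge1}$ be a countable independent family of geometric birth processes, and set $\Sigma_t := \psi_\infty^{-1}\!\left( \left( R_{i,t}-1 \right)_{i\ge1} \right)$ when this makes sense. The first task is well-definedness: by Lemma~\ref{lem: range of psi_infty} one needs $R_{i,t}=1$ for infinitely many $i$ at every $t\in[0,1)$, on one common almost sure event. For fixed $t$, the events $\left\{ R_{i,t}=1 \right\}$ are independent with probability $1-t>0$, so the second Borel--Cantelli lemma suffices. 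To make this uniform in $t$, I would exploit the fact that each birth process is non-decreasing and satisfies $R_{i,0}=1$: hence $R_{i,t}=1$ is equivalent to $R_{i,s}=1$ for every $s\le t$, and it is enough to verify the infinite-occurrence property along a countable dense set of times.

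With $\Sigma$ defined on a full-measure event, property~\ref{item: right-inversions of infinite continous Mallows} holds by construction, and property~\ref{item: marginals of infinite continous Mallows} follows because at each fixed $t$ the tuple $\left( R_{i,t} \right)_{i\ge1}$ is an i.i.d.\ family of $\Geom{1-t}$ variables, which is precisely the definition of the $\Mallows{\N^*}{t}$ distribution. The time-inhomogeneous Markov property comes for free: a countable product of independent Markov processes is Markov, and $\Sigma_t$ is a measurable function of $\left( R_{i,t} \right)_{i\ge1}$ via the coordinatewise insertion procedure recalled just after Lemma~\ref{lem: range of psi_infty}, so $\Sigma$ inherits the Markov property.

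For uniqueness, let $\Sigma'$ be any process satisfying \ref{item: marginals of infinite continous Mallows} and \ref{item: right-inversions of infinite continous Mallows}, and set $R_{i,\cdot}' := r_i(\Sigma'_\cdot)+1$. Property~\ref{item: right-inversions of infinite continous Mallows} says that the $R_{i,\cdot}'$ are independent birth processes, and property~\ref{item: marginals of infinite continous Mallows} combined with the definition of $\Mallows{\N^*}{t}$ via geometric right-inversion counts forces $R_{i,t}' \sim \Geom{1-t}$ for every $t\in[0,1)$. The converse direction of Lemma~\ref{lem: geometric birth process} then identifies the infinitesimal birth rates of each $R_{i,\cdot}'$ with those of a geometric birth process; together with the deterministic initial value $R_{i,0}'=1$ (forced by $\Sigma'_0 = \id_{\N^*}$, since $\Mallows{\N^*}{0}$ is a point mass) and the independence across $i$, this uniquely determines the joint law of $\left( R_{i,\cdot}' \right)_{i\ge1}$. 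Since $\Sigma_t'$ is recovered from its right-inversion counts through $\psi_\infty^{-1}$, the law of $\Sigma'$ matches that of the process constructed above, and the final explicit description of the theorem is simply a restatement of this construction.

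The main obstacle is the uniform-in-$t$ well-definedness of the constructed $\Sigma$: a naive application of Borel--Cantelli yields an almost sure event depending on $t$, and one must invoke the monotonicity of birth processes to collapse an uncountable family of such events to a countable intersection. Past that, everything reduces to routine use of the two lemmas on $\psi_\infty$ and on geometric birth processes.
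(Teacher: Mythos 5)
Your proposal is correct and follows essentially the same route as the paper: existence via i.i.d.\ geometric birth processes, Borel--Cantelli plus monotonicity to get well-definedness simultaneously for all $t$, and uniqueness via the converse part of \Cref{lem: geometric birth process} applied to $r_i(\Sigma')+1$, transferring everything through $\psi_\infty$. One small wording caveat: the Markov property of $\Sigma$ is inherited not merely because $\Sigma_t$ is a measurable function of $\left(R_{i,t}\right)_{i\ge1}$, but because $\psi_\infty$ is a bijection, so the two processes generate the same filtration — which is exactly the point the paper makes and which your construction already provides.
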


We call the process $\Sigma = \left( \Sigma_t \right)_{t\in[0,1)}$ of \Cref{th: continuous-time infinite Mallows process} the {\em continuous-time infinite Mallows process}, and write $\ContinuousMallows{\N^*}$ for its law.

\begin{proof}
    Let $\left( R_{i,\cdot} \right)_{i\ge1}$ be i.i.d.~geometric birth process.
    By the Borel--Cantelli lemma, for any fixed $t\in[0,1)$, almost surely $R_{i,t}=1$ for infinitely many $i$'s.
    Since the processes $\left( R_{i,t} \right)_{t\in[0,1)}$ are monotone, almost surely, this holds for all $t\in[0,1)$.
    Therefore by \Cref{lem: range of psi_infty}, almost surely, we can define $\Sigma_t \in \S_\infty$ with right-inversion counts $r_i\left( \Sigma_t \right) := R_{i,t}-1$ for all $t\in[0,1)$.

    For each $t\in[0,1)$, $\Sigma_t \sim \Mallows{\N^*}{t}$ by definition.
    Since there is a one-to-one correspondence $\psi_\infty$ between infinite permutations and admissible sequences of right-inversion counts, the Markov property of $\left(\Sigma_t\right)_{t\in[0,1)}$ follows from that of the independent processes $\left(R_{i,t}\right)_{t\in[0,1)}$, $i\ge1$.
    Hence this proves the existence of continuous-time infinite Mallows processes.

    Now, let $\Sigma'$ be a time-inhomogeneous Markov process of infinite permutations, satisfying Properties~\ref{item: marginals of infinite continous Mallows} and~\ref{item: right-inversions of infinite continous Mallows}.
    Then for any fixed $t\in[0,1)$, by Property~\ref{item: marginals of infinite continous Mallows}, the sequence $\left( R_{i,t}' \right)_{i\ge1} := \left( r_i(\Sigma_t') +1 \right)_{i\ge1}$ is a sequence of i.i.d.~$\Geom{1-t}$ r.v.'s.
    Thus by Property~\ref{item: right-inversions of infinite continous Mallows}, the processes $\left( R_{i,t}' \right)_{t\in[0,1)}$, $i\ge1$, are independent birth process with geometric marginal laws; therefore, by \Cref{lem: geometric birth process}, they are i.i.d.~geometric birth processes.
    Using the one-to-one correspondence $\psi_\infty$, we see that the process $\left( \Sigma_t' \right)_{t\in[0,1)}$ has the same distribution as $\left( \Sigma_t \right)_{t\in[0,1)}$.
\end{proof}

When studying the infinite Mallows permutation $\Sigma \sim \Mallows{\N^*}{q}$ for fixed $q\in[0,1)$, it is useful to decompose $\Sigma = B_1 \oplus B_2 \oplus \dots$ as an infinite sum of i.i.d.~blocks.
Analogously, in order to study the continuous-time infinite Mallows process $\left( \Sigma_t \right)_{t\in[0,1)} \sim \ContinuousMallows{\N^*}$, we introduce i.i.d.~\enquote{block processes} such that $\Sigma_t = B_{1,t} \oplus B_{2,t} \oplus \dots$ for all $t\in[0,1)$.
To obtain the i.i.d.~property, we require the sizes $\abs{B_{i,t}}$ to be constant over $t$.
The fact that $\Sigma_t(1) \cv{t\to1^-} \infty$ a.s.~then forces us to restrict our processes to an arbitrary sub-interval $[0,q] \subset [0,1)$.

For any fixed $q \in [0,1)$, we define the law $\ContinuousBlock{q}$ as follows.
First, let $B^{q} \sim \MallowsBlock{q}$ and set $M := \abs{B^{q}}$.
Then, conditionally on $M$, let $\left( R_{1,t} \right)_{t\in[0,q]}, \dots, \left( R_{M,t} \right)_{t\in[0,q]}$ be independent geometric birth processes conditioned to satisfy $R_{i,q} = r_i\left( B^{q} \right) +1$ for all $i\in[M]$.
Finally, for all $t\in[0,q]$, define $B^{q}_t \in \S_M$ as the permutation with right-inversion counts $r_i\left( B^{q}_t \right) := R_{i,t} -1$.
This is well-defined by \Cref{lem: ranges of phi_n and psi_n}, since $R_{i,t}-1 \le R_{i,q}-1 \le M-i$ for all $t\le q$.
We say that $(B_t^{q})_{t\in[0,q]}$ is a {\em $q$-block process}, and denote by $\ContinuousBlock{q}$ its law.

Note that $B^{q}_{q} \sim \MallowsBlock{q}$ is a.s.~an indecomposable permutation, but this might not be the case for all $B^{q}_t$, $t\in[0,q)$.
For instance, $B_0^q$ is a.s.~an identity permutation.
It is easy to check with \Cref{lem: CNS right-inversion decomposable finite} that if $B^{q}_t$ is not indecomposable for some $t$, then neither is $B^{q}_{t'}$ for all $t'\le t$.

\begin{theorem}\label{th: CTI Mallows process as sum of blocks}
    Let $\left( \Sigma_t \right)_{t\in[0,1)} \sim \ContinuousMallows{\N^*}$.
    Then, for any fixed $q\in[0,1)$, we can write 
    $$\Sigma_t = B^{q}_{1,t} \oplus \dots \oplus B^{q}_{k,t} \oplus \dots$$ 
    for all $t\in[0,q]$, where $\left( B^{q}_{k,t} \right)_{t\in[0,q]}$, $k\ge1$, are i.i.d.~$q$-block processes.
\end{theorem}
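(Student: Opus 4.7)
The plan is to leverage the monotonicity of $t \mapsto R_{i,t}$ together with the regenerative property of $\Sigma_q$ (\Cref{lem: infinite Mallows is regenerative}) to show that the block decomposition of $\Sigma_q$ works as a (possibly coarser) block decomposition of $\Sigma_t$ for every $t \le q$; the i.i.d.\ structure of the resulting block processes will then follow from a clean conditioning argument on the terminal values of the birth processes. First I would apply \Cref{lem: infinite Mallows is regenerative} at time $q$ to write $\Sigma_q = B_1 \oplus B_2 \oplus \dots$ with i.i.d.\ $\MallowsBlock{q}$-blocks, and set $M_k := \abs{B_k}$ and $N_k := \sum_{j\le k} M_j$ with $N_0 = 0$. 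For each $t \in [0,q]$ and $k \ge 1$, define $B_{k,t}^q \in \S_{M_k}$ as the permutation whose right-inversion counts are $R_{N_{k-1}+i,t} - 1$ for $i \in [M_k]$. This is well-defined by \Cref{lem: ranges of phi_n and psi_n}, since $R_{N_{k-1}+i,t} \le R_{N_{k-1}+i,q}$ and the indecomposability of $B_k$ combined with \Cref{lem: CNS right-inversion decomposable finite} yields $R_{N_{k-1}+i,q} - 1 \le M_k - i$. Reading these same bounds through \Cref{lem: CNS right-inversion decomposable infinite} gives $\Sigma_t = B_{1,t}^q \oplus B_{2,t}^q \oplus \dots$ for every $t \in [0,q]$.

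The crux is then to identify the joint law of $(B_{k,\cdot}^q)_{k\ge1}$. I would condition on the $\sigma$-algebra $\cF_q := \sigma\bigl((R_{i,q})_{i\ge1}\bigr)$, with respect to which the sequences $(M_k)$, $(N_k)$ and $(B_k)$ are all measurable. By \Cref{th: continuous-time infinite Mallows process}, the processes $(R_{i,\cdot})_{t\in[0,q]}$ for $i \ge 1$ are independent; conditioning on their terminal values and applying the Markov property turns them into independent Markov bridges on $[0,q]$, each one conditioned on its own terminal value. In particular, for distinct $k$, the groups $\bigl((R_{N_{k-1}+i,\cdot})_{i\in[M_k]}\bigr)$ are conditionally independent. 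Since $(B_{k,t}^q)_{t\in[0,q]}$ is a deterministic function of the $k$-th such group (together with $M_k$ and $B_k$), the processes $(B_{k,\cdot}^q)_{k\ge1}$ are conditionally independent given $\cF_q$, and the conditional law of $(B_{k,t}^q)_{t\in[0,q]}$ depends only on $B_k$. Integrating over $\cF_q$ and using that $(B_k)_{k\ge1}$ are i.i.d.\ $\MallowsBlock{q}$-distributed, we conclude that $(B_{k,\cdot}^q)_{k\ge1}$ are i.i.d., and that their common law agrees with the definition of $\ContinuousBlock{q}$: sample $B \sim \MallowsBlock{q}$, then run $\abs{B}$ independent geometric birth-process bridges on $[0,q]$ with terminal values $r_i(B)+1$.

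The main obstacle lies in this last conditioning step: we need the terminal-conditioned birth processes to remain independent across different blocks even though the block boundaries $(N_k)$ are themselves random functionals of the terminal data. Working entirely under the conditioning on $\cF_q$ --- which already determines the partition of $\N^*$ into the blocks --- dissolves this apparent circularity, since $(M_k)$ and $(N_k)$ are $\cF_q$-measurable and introduce no additional coupling between the bridges.
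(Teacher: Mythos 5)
Your proposal is correct, and its skeleton is the same as the paper's: decompose $\Sigma_q$ into i.i.d.\ $\MallowsBlock{q}$ blocks via \Cref{lem: infinite Mallows is regenerative}, use the monotonicity of the right-inversion counts together with \Cref{lem: CNS right-inversion decomposable infinite} to propagate that decomposition to all $t\le q$, and identify the law of each block process through the bridge description of the processes $\left(R_{i,\cdot}\right)$ conditioned on their terminal values. The one place where you argue differently is the independence across blocks. The paper observes that the cumulative block sizes $M_1+\dots+M_k$ are stopping times for the i.i.d.\ sequence of processes $\left(R_{i,\cdot}\right)_{i\ge1}$ indexed by $i$, and deduces the i.i.d.\ property of the block processes from this renewal-type argument, before identifying the law of a single block by conditioning on $\Sigma_q$. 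You instead condition once and for all on $\cF_q=\sigma\bigl((R_{i,q})_{i\ge1}\bigr)$: the bridges are conditionally independent, the block boundaries are $\cF_q$-measurable, the conditional law of the $k$-th block process depends only on $B_k$, and the unconditional i.i.d.\ statement then follows by integrating against the i.i.d.\ law of $(B_k)_{k\ge1}$ supplied by \Cref{lem: infinite Mallows is regenerative}. Both routes are valid; yours recycles the already-known i.i.d.\ structure of the blocks of $\Sigma_q$ and packages independence and law identification into a single conditioning computation, at the cost of having to justify carefully (as you do) that grouping conditionally independent bridges along random but $\cF_q$-measurable index sets preserves conditional independence, whereas the paper's stopping-time observation delivers process-level independence directly and keeps the two steps separate.
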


\begin{proof}
    By \Cref{lem: infinite Mallows is regenerative}, we can write $\Sigma_{q} = B^q_{1} \oplus\dots\oplus B^q_{k} \oplus\dots$ where $B^q_{1}, \dots, B^q_{k}, \dots$ are i.i.d.~random blocks with $\MallowsBlock{q}$ distribution.
    By \Cref{lem: CNS right-inversion decomposable infinite}, since the right-inversion counts of $\left( \Sigma_t \right)_{t\in[0,q]}$ are non-decreasing as $t$ goes from $0$ to $q$, we can write $\Sigma_{t} = B^q_{1,t} \oplus\dots\oplus B^q_{k,t} \oplus\dots$ for all $t\in[0,q]$, where $B^q_{k,t}$ has the same size as $B^q_{k}$ for all $k\ge1$ (but might not be indecomposable). 

    For all $i\ge1$ and $t\in[0,q]$, let $R_{i,t} := r_i\left( \Sigma_t \right) +1$.
    By \Cref{th: continuous-time infinite Mallows process}, the processes $\left( R_{i,t} \right)_{t\in[0,q]}$ are i.i.d.~geometric birth processes.
    Write $M_k := \abs{B^q_{k,t}}$ for all $k\ge1$.
    An important observation is that the cumulative sums $M_1 +\dots+ M_k$ are stopping times for the i.i.d.~sequence of processes $\left( R_{i,\cdot} \right)_{i\ge1}$.
    Indeed, $M_1 +\dots+ M_k$ is the first index $m$ for which the sequence $R_{1,q}, \dots, R_{m,q}$ defines a sum of $k$ blocks.
    Furthermore, the sequence $\left( R_{i,\cdot} \right)_{i\le M_1}$ is independent from the sequence $\left( R_{i,\cdot} \right)_{i>M_1}$, and the latter is distributed like $\left( R_{i,\cdot} \right)_{i\ge1}$.
    As a consequence, the permutation processes $\left( B^q_{k,t} \right)_{t\in[0,q]}$, $k\ge1$ are i.i.d., and it remains to check that they are distributed like $q$-block processes.
    
    Conditionally given $\Sigma_{q}$, the processes $\left( R_{i,t} \right)_{t\in[0,q]}$, $i\ge1$, are independent geometric birth processes conditioned on their end-values $R_{i,q} = r_i\left( \Sigma_{q} \right) +1$.
    Thus $\left( R_{i,t} \right)_{t\in[0,q]}$, $i\in[M_1]$, are independent geometric birth processes conditioned to satisfy $R_{i,q} = r_i\left( B^q_{1} \right) +1$.
    Hence the permutation process $\left( B^q_{1,t} \right)_{t\in[0,q]}$, which has right-inversion counts $r_i\left( B^q_{1,t} \right) = R_{i,t} -1$ for $i\in[M_1]$, is indeed a $q$-block process.
\end{proof}

\subsection{Time-joint CLT and limit Gaussian process}\label{sec: proof time-joint CLT}

We construct a continuous-time coupling of finite Mallows permutations as follows:
let $\left( \Sigma_t \right)_{t\in[0,1)}$ be a continuous-time infinite Mallows process as defined in \Cref{th: continuous-time infinite Mallows process}, then let $\tau_{n,t} := \pat{[n]}{\Sigma_t}$ for all $n\in\N^*$ and $t\in[0,1)$.

For any fixed $n\in\N^*$, the process $\left( \tau_{n,t} \right)_{t\in[0,1)}$ has $\Mallows{n}{t}$ marginals by \Cref{lem: pattern of infinite Mallows}, and its right-inversion counts have unitary jumps.
Using the terminology of \cite{C22}, it is therefore a smooth Mallows process.
However it is not clear whether its right-inversion counts are independent and Markov, i.e.~whether $\left( \tau_{n,t} \right)_{t\in[0,1)]}$ is a regular Markov process (with respect to its {\em right}-inversions rather than its {\em left}-inversions), or even \enquote{close} to being one.

\begin{proof}[Proof of \Cref{th: Donsker for time-joint single pattern}]
    Arbitrarily fix $q\in(0,1)$.
    By \Cref{th: CTI Mallows process as sum of blocks}, we can write $\left( \Sigma_t \right)_{t\in[0,q]}$ as a sum of i.i.d.~$q$-block processes:
    $$ \Sigma_t = B^q_{1,t} \oplus\dots\oplus B^q_{k,t} \oplus\dots $$
    for all $t\in[0,q]$.
    Therefore 
    $$ \tau_{n,t} = B^q_{1,t} \oplus\dots\oplus B^q_{K_n,t} \oplus \sigma_{n,t} $$ 
    for all $t\in[0,q]$, where $K_n := \max\left\{ k\ge0 : \abs{B^q_{1}} +\dots+ \abs{B^q_{k}} \le n \right\}$ and $\sigma_{n,t}$ is a pattern of $B^q_{K_n+1,t}$.
    Note that $K_n$ depends on $q$, but not on $t$.
    As in the proof of \Cref{th: LLN and CLT for joint patterns with fixed q}, we can define
    \begin{equation*}
        \occinj{\pi}{\tau_{n,t}} := \sum_{1\le k_1<\dots<k_d \le K_n} \prod_{j=1}^d \occ{\pi_j}{B^q_{k_j,t}} ,
    \end{equation*}
    and for any $p>0$ we have
    \begin{equation}\label{eq: approx occ occinj in Donsker}
        \expec{ \abs{ \occ{\pi}{\tau_{n,t}} - \occinj{\pi}{\tau_{n,t}} }^p } \le \Landau{\O}{}{ \left( n^{d-1} \right)^p }
    \end{equation}
    for all $t\in[0,q]$, where the constant inside the $\O$ depends on $q$ but not on $t$.
    Now, fix $0\le t_1 \le \dots \le t_m \le q$ and a real vector $\Lambda = \left( \lambda_\ell \right)_{1\le \ell\le m}$.
    We can write
    \begin{equation}\label{eq: time-joint occinj as U-stat}
        \sum_{\ell=1}^m \lambda_\ell \occinj{\pi}{\tau_{n,t_\ell}}
        = \sum_{1\le k_1< \dots< k_d\le K_n} f_\Lambda\left( B^q_{k_1,\cdot}, \dots, B^q_{k_d,\cdot} \right)
    \end{equation}
    where
    \begin{equation*}
        f_\Lambda\left( B^q_{k_1,\cdot}, \dots, B^q_{k_d,\cdot} \right)
        = \sum_{\ell=1}^m \lambda_\ell \prod_{j=1}^d \occ{\pi_j}{B^q_{k_j,t_\ell}} .
    \end{equation*}
    One may thus see \eqref{eq: time-joint occinj as U-stat} as an asymmetric $U$-statistic with the underlying i.i.d.~random variables $B^q_{k,\cdot} = \left(B^q_{k,t}\right)_{t\in[0,q]}$ for $k\ge1$, and stopped at the random time $K_n$.
    Using \cite[Theorem~3.11]{J18} and \eqref{eq: approx occ occinj in Donsker}, we deduce that
    \begin{equation}\label{eq: cv distrib of time-combinations}
        \frac{ \sum_{\ell=1}^m \lambda_\ell \occ{\pi}{\tau_{n,t_\ell}} - \binom{n}{d} \sum_{\ell=1}^m \lambda_\ell \prod_{j=1}^d \frac{\expec{\occ{\pi_j}{B^q_{t_\ell}}}}{\expec{\abs{B^q}}} }{n^{d-1/2}}
        \cv{}
        \Normal{0}{\eta_\Lambda^2}
    \end{equation}
    in distribution, for some $\eta_\Lambda^2 \ge0$, where $\left( B^q_t \right)_{t\in[0,q]} \sim \ContinuousBlock{q}$.
    When applied with $m=1$, this yields
    \begin{equation*}
        \binom{n}{d}^{-1} \occ{\pi}{\tau_{n,t}}
        \,\cv{n\to\infty}\,
        \prod_{j=1}^d \frac{\expec{\occ{\pi_j}{B^q_{t}}}}{\expec{\abs{B^q}}}
    \end{equation*}
    in probability (and almost surely by \cite[Theorem~3.9]{J18}), for all $0\le t\le q< 1$.
    Using \Cref{th: LLN and CLT for joint patterns with fixed q} or the fact that $q\in[t,1)$ is arbitrary, we deduce that
    \begin{equation}\label{eq: double expression for e}
        \prod_{j=1}^d \frac{\expec{\occ{\pi_j}{B^q_{t}}}}{\expec{\abs{B^q}}}
        = \prod_{j=1}^d \frac{\expec{\occ{\pi_j}{B^t_{t}}}}{\expec{\abs{B^t}}}
        = e_{\pi,t}
    \end{equation}
    for all $0\le t\le q< 1$, which is a useful identity in itself.
    Thus \eqref{eq: cv distrib of time-combinations} becomes
    \begin{equation}\label{eq: cv distrib of time-combinations with e}
        \frac{ \sum_{\ell=1}^m \lambda_\ell \occ{\pi}{\tau_{n,t_\ell}} - \binom{n}{d} \sum_{\ell=1}^m \lambda_\ell e_{\pi,t_\ell} }{n^{d-1/2}}
        \cv{}
        \Normal{0}{\eta_\Lambda^2} .
    \end{equation}
    The asymptotic variance $\eta_\Lambda^2$ is expressed in terms of the (non-centered) univariate projections:
    \begin{equation*}
        f_{\Lambda,j}\left( (B^q_t)_{t\in[0,q]} \right) := \expec{ f_\Lambda\left(B^q_{1,\cdot},\dots,B^q_{d,\cdot}\right) \,\Big|\, B^q_{j,\cdot} = B^q_\cdot } 
        = \sum_{\ell=1}^m \lambda_\ell \frac{\occ{\pi_j}{B^q_{t_\ell}}}{\expec{\occ{\pi_j}{B^q_{t_\ell}}}} \prod_{k=1}^d \expec{\occ{\pi_k}{B^q_{t_\ell}}} ,
    \end{equation*}
    for $j\in[d]$, where we set $\frac{\occ{\rho}{B^q_{0}}}{\expec{\occ{\rho}{B^q_{0}}}} := 1$ for all $\rho\notin \{\id_r, r\ge1\}$ by convention.
    Note that $\expec{\occ{\pi}{B^q_t}}>0$ for all $\pi\in\S$ and $0< t\le q< 1$, since the support of $B^q_t$ contains the set of indecomposable permutations.
    Also, $\expec{\occ{\id_r}{B^q_0}}>0$ for all $r\in\N^*$ since the support of $B^q_0$ is the set of identity permutations.
    Then by \cite[Equation~(3.28)]{J18}:
    \begin{align*}
        \eta_\Lambda^2
        = &\expec{\abs{B^q}}^{1-2d}
        \sum_{1\le i,j\le d}
        \binom{i\p j\m 2}{i\m 1} \binom{2d\m i\m j}{d\m i} \frac{1}{(2d - 1)!}
        \cov{f_{\Lambda,j}\left( (B^q_t)_{t\in[0,q]} \right)}{f_{\Lambda,i}\left( (B^q_t)_{t\in[0,q]} \right)} \nonumber
        \\&- \frac{2\expec{\abs{B^q}}^{-2d}}{(d-1)!d!}
        \left( \sum_{\ell=1}^m \lambda_\ell \prod_{k=1}^d \expec{\occ{\pi_k}{B^q_{t_\ell}}}\right)
        \sum_{i=1}^d \cov{\abs{B^q}}{f_{\Lambda,i}\left( (B^q_t)_{t\in[0,q]} \right)} \nonumber
        \\&+ \frac{\expec{\abs{B^q}}^{-2d-1}}{(d-1)!^2}
        \left( \sum_{\ell=1}^m \lambda_\ell \prod_{k=1}^d \expec{\occ{\pi_k}{B^q_{t_\ell}}} \right)^2
        \var{\abs{B^q}} ,
    \end{align*}
    where $B_\cdot^q = \left( B^q_t \right)_{t\in[0,q]} \sim \ContinuousBlock{q}$.
    As in the proof of \Cref{th: LLN and CLT for joint patterns with fixed q}, we can write $\eta_\Lambda^2 = \Lambda^{\rm T} H_{\pi} \Lambda$ where for all $0\le s,t\le q$:
    \begin{align}\label{eq: formula covariance function Donsker}
        H_\pi(s,t)
        = \expec{\abs{B^q}} e_{\pi,s} e_{\pi,t} \Bigg(
        &\frac{1}{(2d\m1)!}
        \sum_{1\le i,j\le d}
        \binom{i\p j\m 2}{i\m 1} \binom{2d\m i\m j}{d\m i} \cov{ \frac{\occ{\pi_j}{B^q_t}}{\expec{\occ{\pi_j}{B^q_t}}} }{ \frac{\occ{\pi_i}{B^q_s}}{\expec{\occ{\pi_i}{B^q_s}}} } \nonumber
        \\&- \frac{1}{(d\m1)!d!}
        \sum_{i=1}^d \cov{ \frac{\abs{B_\cdot^q}}{\expec{\abs{B_\cdot^q}}} }{ \frac{\occ{\pi_i}{B^q_s}}{\expec{\occ{\pi_i}{B^q_s}}} + \frac{\occ{\pi_i}{B^q_t}}{\expec{\occ{\pi_i}{B^q_t}}} } \nonumber
        \\&+ \frac{1}{(d\m1)!^2}
        \var{ \frac{\abs{B^q}}{\expec{\abs{B^q}}} } 
        \Bigg) ,
    \end{align}
    where $\frac{\occ{\rho}{B^q_{0}}}{\expec{\occ{\rho}{B^q_{0}}}} := 1$ for all $\rho\notin \{\id_r, r\ge1\}$ by convention.
    The identity $\Lambda^{\rm T} H_{\pi} \Lambda = \eta_\Lambda^2 \ge0$ for all vectors $\Lambda$ shows that $H_\pi$ is positive semidefinite, thus there exists a centered Gaussian process $\left( X_{\pi,t} \right)_{t\in[0,q]}$ with covariance function $H_\pi$.
    Since \eqref{eq: cv distrib of time-combinations} holds for all $\Lambda = (\lambda_\ell)_{1\le \ell\le m} \in\R^m$, the Cramér--Wold theorem implies that the random vector
    \begin{equation*}
        \left(
        \frac{\occ{\pi}{\tau_{n,t_\ell}} - \binom{n}{d} e_{\pi,t_\ell} }{n^{d-1/2}}
        \right)_{1\le \ell\le m}
    \end{equation*}
    converges in distribution, as $n\to\infty$, to a Gaussian vector with covariance matrix $\left( H_\pi(t_\ell, t_{\ell'}) \right)_{1\le \ell,\ell'\le m}$.
    Since this holds for any $0\le t_1\le \dots\le t_m\le q< 1$, this proves the finite-dimensional convergence in distribution of $\frac{\occ{\pi}{\tau_{n,\cdot}} - \binom{n}{d} e_{\pi,\cdot} }{n^{d-1/2}}$ to $X_\pi$, and also that $H_\pi$ does not depend on $q$ (hence $X_{\pi,\cdot}$ can be extended to $[0,1)$).
    Convergence of joint moments holds as usual, by \cite[Theorem~3.17]{J18}, \eqref{eq: approx occ occinj in Donsker} and Hölder's inequality.
\end{proof}

\subsection{Continuity of the limit Gaussian process}\label{sec: proof continuity}

The goal of this section is to prove \Cref{prop: continuity}, using in particular the Kolmogorov continuity theorem.
This requires moment estimates on $\occ{\pi}{B_t^q} - \occ{\pi}{B_s^q}$ for all $\pi\in\S$ and $0< s\le t\le q< 1$, which we obtain by comparison with the number of jumps in a continuous-time infinite Mallows process.

In what follows, we write $\mu^{*k}$ for the convolution of a measure $\mu$ with itself $k$ times. If $\mu$ is a probability distribution, then $\mu^{*k}$ is the distribution of the sum of $k$ i.i.d.~random variables with law $\mu$.

\begin{lemma}\label{lem: increment law}
    Let $s\in[0,1)$ and $k\in\N^*$.
    Let $\left( R_t \right)_{t\in[s,1)}$ be a birth process started at $R_s=k$, with birth rates $p(j,t)=\frac{j}{1-t}$ for $j\in\N^*$, $t\in[s,1)$.
    Its marginal distributions are given by $R_t \sim \Geom{\frac{1-t}{1-s}}^{*k}$ for all $t\in[s,1)$.
\end{lemma}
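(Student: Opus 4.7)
The strategy is to realize $R$ as the sum of $k$ independent copies of a simpler birth process, and to invoke the uniqueness of birth processes given their rates and initial condition (as used in the preamble of the excerpt via the Chapman--Kolmogorov forward equation).

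First, I would handle the case $k=1$ by a time-change argument. Let $\tilde R$ be a birth process on $[s,1)$ with $\tilde R_s = 1$ and rates $p(j,t) = j/(1-t)$. Set $\tilde t := (t-s)/(1-s)$, so that $1-\tilde t = (1-t)/(1-s)$ and $d\tilde t = dt/(1-s)$. Under this bijection $[s,1) \to [0,1)$, the process $u\mapsto \tilde R_{s + (1-s)u}$ becomes a birth process started at $1$ at time $0$ whose infinitesimal rate in state $j$ at reparametrised time $\tilde t$ is $(1-s)\cdot j/(1-t) = j/(1-\tilde t)$. By \Cref{lem: geometric birth process} this is the geometric birth process, so $\tilde R_t \sim \Geom{1-\tilde t} = \Geom{(1-t)/(1-s)}$ for every $t\in[s,1)$.

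Next, I would pass from $k=1$ to general $k$ by superposition. Take $k$ independent copies $\tilde R_{1,\cdot},\dots,\tilde R_{k,\cdot}$ of the process above, and set $S_t := \tilde R_{1,t} + \dots + \tilde R_{k,t}$. Then $S$ is time-inhomogeneous Markov (as a deterministic function of independent Markov processes with synchronised clocks), starts at $S_s = k$, and can only increase by jumps of size $1$. The essential observation is that the birth rate of each summand is \emph{linear} in its own state: conditionally on $S_t = J$ and on any decomposition $(j_1,\dots,j_k)$ with $\sum j_i = J$, the total instantaneous jump rate is
\[
\sum_{i=1}^k \frac{j_i}{1-t} = \frac{J}{1-t},
\]
which depends only on the sum $J$. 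Hence $S$ is itself a birth process on $[s,1)$ with initial state $k$ and the very same rates $p(j,t) = j/(1-t)$ as $R$.

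By the uniqueness of birth processes from their birth rates and initial condition (mentioned in the paragraph following \eqref{eq: def birth rates}), $R$ and $S$ agree in law as processes on $[s,1)$, and in particular $R_t \stackrel{d}{=} S_t$. Since $S_t$ is a sum of $k$ i.i.d.\ $\Geom{(1-t)/(1-s)}$ variables by Step~1, this yields the claimed identity $R_t \sim \Geom{(1-t)/(1-s)}^{*k}$. The only subtle point is verifying that the superposition $S$ is genuinely a birth process with state-dependent rate $J/(1-t)$ rather than something more complicated; the linearity of $p(j,t)$ in $j$ is precisely what makes this work, so there is no real obstacle.
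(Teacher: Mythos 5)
Your proof is correct, but it takes a genuinely different route from the paper's. The paper works directly with the transition law: it writes down the Chapman--Kolmogorov forward equations $\partial_t \mu_{k,s,t}(j) = \mu_{k,s,t}(j-1)\frac{j-1}{1-t} - \mu_{k,s,t}(j)\frac{j}{1-t}$ and verifies that the negative binomial probabilities $\binom{j-1}{k-1}\left(\frac{t-s}{1-s}\right)^{j-k}\left(\frac{1-t}{1-s}\right)^{k}$ solve them, which gives the explicit pointwise formula as a byproduct and stays entirely computational. You instead reduce to the known case of \Cref{lem: geometric birth process} by a deterministic time change $\tilde t = (t-s)/(1-s)$ (handling $k=1$), and then obtain general $k$ by superposing $k$ i.i.d.\ copies, using the linearity of $p(j,t)=j/(1-t)$ in $j$ to see that the sum is again a birth process with the same rates, and concluding via the uniqueness of a birth process given its rates and initial law. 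This is exactly the \enquote{branching property} route that the paper itself only alludes to in the remark following the lemma; it is more structural and explains \emph{why} the law is a $k$-fold convolution, at the modest cost of the lumpability step (a function of a Markov process is not Markov in general --- here it is because the total jump rate from any configuration $(j_1,\dots,j_k)$ depends only on $\sum_i j_i$, and independent components a.s.\ never jump simultaneously), which you correctly identify as the crux. The paper's computation, by contrast, needs no such argument and additionally delivers the explicit transition probabilities.
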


\begin{proof}
    For all $t\in[s,1)$, write $\mu_{k,s,t}$ for the law of $R_t$, that is $\mu_{k,s,t}(j) := \probcond{R_t = j}{R_s=k}$ for all $j\in\N$.
    Clearly, $\mu_{k,s,t}(j) = 0$ whenever $j<k$, and $\mu_{k,s,s} = \delta_k$.
    Moreover, the Chapman--Kolmogorov forward equations (see e.g.~the proof of \cite[Proposition~3.5]{AK24}) yield:
    \[
    \partial_t \mu_{k,s,t}(j) = \mu_{k,s,t}(j-1) \frac{j-1}{1-t} - \mu_{k,s,t}(j) \frac{j}{1-t}
    \]
    for every $j\in\N^*$ and $t\in[s,1)$.
    One can check that the (only) solution of this system is given by
    \[
    \mu_{k,s,t}(j) = \binom{j-1}{k-1} \left(\frac{t-s}{1-s}\right)^{j-k} \left(\frac{1-t}{1-s}\right)^k ,
    \]
    for all $j\ge k$, which is indeed the $\Geom{\frac{1-t}{1-s}}^{*k}$ distribution.
\end{proof}

\begin{remark}
    The expression $\mu^{*k}$ of the distribution of $R_t$ in \Cref{lem: increment law} can be related to a \enquote{branching property}; it is a simple consequence of the fact that the birth rates are of the form $p(j,t) = j \cdot \tilde p(t)$.
\end{remark}

\begin{corollary}\label{cor: increment moments}
    Let $\left( R_t \right)_{t\in[s,1)}$ be a birth process started at $R_0=1$, with birth rates $p(j,t)=\frac{j}{1-t}$ for $j\in\N^*$, $t\in[0,1)$.
    For all $0\le s\le t< 1$, we have:
    \[
    \expec{ R_t - R_s } = \frac{t-s}{(1-s)(1-t)}
    \quad;\quad
    \var{ R_t - R_s } = \frac{(t-s)( 1 + st - 2s )}{(1-s)^2(1-t)^2} .
    \]
\end{corollary}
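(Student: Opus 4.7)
The plan is to combine \Cref{lem: increment law} with the Markov property of the birth process $(R_t)_{t\in[0,1)}$. Applying that lemma with starting time $0$ and $k=1$ recovers $R_u \sim \Geom{1-u}$ for every $u\in[0,1)$ (already a consequence of \Cref{lem: geometric birth process}); since a $\Geom{p}$ random variable has mean $1/p$ and variance $(1-p)/p^2$, this gives $\expec{R_s} = 1/(1-s)$ and $\var{R_s} = s/(1-s)^2$.

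Next, I would invoke the Markov property at time $s$: conditionally on $R_s = k$, the shifted process $(R_{s+u})_{u\ge 0}$ is a birth process starting at $k$ with the same infinitesimal rates $p(j,t) = j/(1-t)$, so \Cref{lem: increment law} applies verbatim and yields $R_t \mid R_s = k \sim \Geom{(1-t)/(1-s)}^{*k}$. Using again the formulas for the mean and variance of a geometric random variable, one obtains
\[
\expec{R_t - R_s \mid R_s} = R_s \cdot \frac{t-s}{1-t},
\qquad
\var{R_t - R_s \mid R_s} = R_s \cdot \frac{(t-s)(1-s)}{(1-t)^2}.
\]

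The formula for the mean then follows immediately from the tower property: $\expec{R_t - R_s} = \expec{R_s} \cdot (t-s)/(1-t) = (t-s)/[(1-s)(1-t)]$. For the variance, the law of total variance gives
\[
\var{R_t - R_s} = \expec{R_s} \cdot \frac{(t-s)(1-s)}{(1-t)^2} + \var{R_s} \cdot \left(\frac{t-s}{1-t}\right)^2,
\]
and after substituting the values of $\expec{R_s}$ and $\var{R_s}$ and factoring out $(t-s)/[(1-s)^2(1-t)^2]$, the claim reduces to the elementary identity $(1-s)^2 + s(t-s) = 1 - 2s + st$. There is no real obstacle; the only mildly delicate step is this last algebraic simplification.
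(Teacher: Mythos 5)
Your proposal is correct and follows essentially the same route as the paper: the conditional law $\Geom{\frac{1-t}{1-s}}^{*R_s}$ from \Cref{lem: increment law} (via the Markov property) combined with the law of total variance, with the algebra checking out. The only cosmetic difference is that the paper gets the mean directly as $\frac{1}{1-t}-\frac{1}{1-s}$ from the geometric marginals of \Cref{lem: geometric birth process}, while you route it through the tower property; both are immediate.
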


\begin{proof}
    Recall that $R_t \sim \Geom{1-t}$ for all $t\in[0,1)$ by \Cref{lem: geometric birth process}, therefore:
    \[
    \expec{R_t-R_s} = \frac{1}{1-t} - \frac{1}{1-s} = \frac{t-s}{(1-s)(1-t)} .
    \]
    Furthermore, by \Cref{lem: increment law}, the conditional law of $R_t$ given $R_s$ is $\Geom{\frac{1-t}{1-s}}^{*R_s}$.
    Hence, by the law of total variance:
    \begin{align*}
        \var{ R_t - R_s }
        &= \var{ \expec{ R_t - R_s \,\big|\, R_s} } 
        + \expec{ \var{ R_t - R_s \,\big|\, R_s} }
        \\&= \var{ R_s \frac{1-s}{1-t} - R_s }
        + \expec{ R_s \frac{(t-s)(1-s)}{(1-t)^2} }
        \\&= \frac{s}{(1-s)^2} \left(\frac{t-s}{1-t}\right)^2
        + \frac{1}{1-s} \frac{(t-s)(1-s)}{(1-t)^2}
        \\&= \frac{(t-s)( 1 + st - 2s )}{(1-s)^2(1-t)^2} ,
    \end{align*}
    as announced.
\end{proof}

\begin{lemma}\label{lem: Lipschitz jumps in q-block}
    Let $q\in(0,1)$.
    There exists a constant $c = c_q > 0$ such that for all $0\le s\le t\le q$:
    \[
    \expec{ \inv{B_t^q} - \inv{B_s^q} } \le c(t-s)
    \quad;\quad
    \expec{ \left(\big. \inv{B_t^q} - \inv{B_s^q} \right)^2 } \le c(t-s) \,,
    \]
    where $B_\cdot^q \sim \ContinuousBlock{q}$.
\end{lemma}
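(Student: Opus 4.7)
The plan is to exploit a Wald-type identity stemming from the i.i.d.\ structure of the birth processes underlying the continuous-time infinite Mallows construction. By \Cref{th: CTI Mallows process as sum of blocks}, I identify $B^q_\cdot$ with the first block $B^q_{1,\cdot}$ of some $\Sigma_\cdot \sim \ContinuousMallows{\N^*}$, and by \Cref{th: continuous-time infinite Mallows process} the right-inversion counts $R_{i,t} := r_i(\Sigma_t)+1$ form i.i.d.\ geometric birth processes indexed by $i\ge 1$. Setting $M := \abs{B^q_{1,q}}$, this yields
\[
\inv{B^q_t} - \inv{B^q_s} = \sum_{i=1}^M (R_{i,t} - R_{i,s}).
\]
The key observation is that by \Cref{lem: CNS right-inversion decomposable infinite}, the event $\{m\le M\}$ is determined by $(R_{1,q}, \dots, R_{m-1,q})$ alone, hence is independent of the entire trajectory $R_{m,\cdot}$. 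This gives Wald's identity $\expec{\sum_{i=1}^M f(R_{i,\cdot})} = \expec{M} \cdot \expec{f(R_\cdot)}$ for any nonnegative measurable functional $f$ on paths.

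Applied with $f(R_\cdot) = R_t - R_s$, and using the estimate $\expec{R_t - R_s} = (t-s)/((1-s)(1-t)) \le (t-s)/(1-q)^2$ from \Cref{cor: increment moments}, this immediately establishes the first-moment bound. For the second moment, I would split
\[
\expec{\left(\inv{B^q_t} - \inv{B^q_s}\right)^2}
= \expec{\sum_{i=1}^M (R_{i,t} - R_{i,s})^2}
+ \expec{\sum_{\substack{i\ne j \\ i,j\le M}} (R_{i,t} - R_{i,s})(R_{j,t} - R_{j,s})}.
\]
The sum-of-squares term is again handled by Wald's identity, giving $\expec{M} \cdot \expec{(R_t - R_s)^2}$, which \Cref{cor: increment moments} shows is $O(t-s)$ uniformly on $[0,q]$.

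The main obstacle is the cross-term, since $M$ depends jointly on several endpoints so Wald's identity does not apply directly. My plan is to condition on the $\sigma$-algebra generated by the endpoints $(R_{i,q})_{i\ge 1}$, under which $M$ is measurable and the processes $R_{i,\cdot}$ for $i\in[M]$ become independent birth-process bridges. A short computation from \Cref{lem: increment law} (combining the $s\to t$ and $t\to q$ transition densities and simplifying via a Vandermonde identity) gives the bridge mean
\[
\expec{R_{i,t} - R_{i,s} \,\big|\, R_{i,q} = k} = (k-1)\, p_0, \qquad p_0 := \frac{(1-q)(t-s)}{q(1-s)(1-t)}.
\]
The conditional cross-term sum then collapses to $p_0^2 \bigl(\inv{B^q}^2 - \sum_{i=1}^M (R_{i,q}-1)^2\bigr) \le p_0^2 \inv{B^q}^2$; bounding $p_0^2 \le (t-s)/(q(1-q)^2)$ via $t-s\le q$, and invoking the finiteness of $\expec{\inv{B^q}^2}$ (which follows from $\abs{B}$ having all moments finite), yields the required $O(t-s)$ control.
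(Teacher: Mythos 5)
Your proof is correct, but it takes a genuinely different route from the paper. The paper never estimates a single block directly: it compares the full increment sum $J_{n,s,t}=\sum_{i=1}^n(R_{i,t}-R_{i,s})$, whose mean and variance are explicit by \Cref{cor: increment moments}, with the block-stopped sum $J'_{n,s,t}=\sum_{k\le K_n}\left(\inv{B^q_{k,t}}-\inv{B^q_{k,s}}\right)$, controls their difference by $\abs{B^q_{K_n+1,\cdot}}^2$, and then invokes the asymptotic mean and variance formulas for renewal-stopped sums from \cite[Theorem~3.17]{J18} to identify $\expec{\inv{B^q_t}-\inv{B^q_s}}=\expec{\abs{B^q_\cdot}}\frac{t-s}{(1-s)(1-t)}$ and to bound the block variance by $\Landau{\O}{}{t-s}$. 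You instead argue at the level of the first block itself: the observation that $\{M\ge m\}$ is a function of $R_{1,q},\dots,R_{m-1,q}$ only (via \Cref{lem: CNS right-inversion decomposable infinite}) is exactly what makes Wald's identity valid, and it recovers the paper's first-moment identity directly, as well as the diagonal part of the second moment; for the cross terms, your conditioning on the endpoints is legitimate (the trajectories are i.i.d., so given $(R_{i,q})_i$ they are independent bridges), and your claimed bridge mean is indeed correct — the conditional law of $R_t-1$ given $R_q=k$ is $\mathrm{Binomial}\left(k-1,\tfrac{t(1-q)}{q(1-t)}\right)$, giving increment mean $(k-1)p_0$ with $p_0=\frac{(1-q)(t-s)}{q(1-s)(1-t)}$ — after which the bound $p_0^2\,\expec{\inv{B^q}^2}\le \frac{t-s}{q(1-q)^2}\expec{\inv{B^q}^2}$ closes the estimate since $\abs{B^q}$ has all moments. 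What your route buys is a self-contained argument with explicit constants that does not rely on \cite{J18} at all; what the paper's route buys is that it recycles the stopped-$U$-statistic machinery already set up for the CLTs and avoids any time-reversal/bridge computation. The one step you should write out rather than assert is the bridge mean, which is a short Bayes computation from \Cref{lem: increment law}; everything else is complete.
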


\begin{proof}
    Let $\Sigma_\cdot \sim \ContinuousMallows{\N^*}$, and let $\left( R_{i,\cdot} \right)_{i\ge1}$ be its sequence of right-inversion counts.
    For $n\in\N^*$, define: 
    \[
    J_{n,s,t} := \sum_{i=1}^n \left( R_{i,t} - R_{i,s} \right)
    = \inv{\big.\pat{[n]}{\Sigma_t}} - \inv{\big.\pat{[n]}{\Sigma_s}} .
    \]
    On the one hand, since this is a sum of i.i.d.~random variables, we can use \Cref{cor: increment moments} to compute its moments:
    \begin{equation}\label{eq: asymptotic moments J}
        \expec{J_{n,s,t}} = \frac{n(t-s)}{(1-s)(1-t)} \le \frac{n(t-s)}{(1-q)^2}
        \quad;\quad
        \var{J_{n,s,t}} = \frac{n(t-s)(1+st-2s)}{(1-s)^2(1-t)^2} \le \frac{2n(t-s)}{(1-q)^4} .
    \end{equation}
    On the other hand, recall that $\Sigma_\cdot = B_{1,\cdot}^q \oplus \dots \oplus B_{k,\cdot}^q \oplus \dots$ by \Cref{th: CTI Mallows process as sum of blocks}, where $\left( B_{k,\cdot}^q \right)_{k\ge1}$ is a sequence of i.i.d.~$q$-block processes.
    Let $K_n := \max\left\{ k\ge0 \,:\, \abs{B_{1,\cdot}^q}+ \dots+ \abs{B_{k,\cdot}^q} \le n \right\}$ and 
    \[
    J_{n,s,t}' := \sum_{k=1}^{K_n} \left( \inv{B_{k,t}^q} - \inv{B_{k,s}^q} \right) .
    \]
    Then $J_{n,s,t} = J_{n,s,t}' + \sum_{i = \abs{B_{1,\cdot}^q}+ \dots+ \abs{B_{K_n,\cdot}^q} +1}^n \left( R_{i,t} - R_{i,s} \right)$, and thus
    \begin{equation}\label{eq: approx J J'}
        \abs{ J_{n,s,t} - J_{n,s,t}' }
        \le \inv{B_{K_n+1,t}^q} - \inv{B_{K_n+1,s}^q}
        \le \abs{B_{K_n+1,\cdot}^q}^2 ,
    \end{equation}
    which is bounded over $n$ in $L^p$ for all $p>0$, as in the proof of \Cref{th: LLN and CLT for joint patterns with fixed q}.
    Since $J_{n,s,t}'$ is a $U$-statistic stopped at a renewal time, we can apply \cite[Theorem~3.17]{J18}:
    \begin{equation}\label{eq: asymptotic expec J'}
        \expec{ J_{n,s,t}' } \sim n \frac{ \expec{\inv{B_t^q} - \inv{B_s^q}} }{ \expec{\abs{B_\cdot^q}} }
    \end{equation}
    as $n\to\infty$, where $B_\cdot^q \sim \ContinuousBlock{q}$, and
    \begin{multline}\label{eq: asymptotic var J'}
        \var{ J_{n,s,t}' } \sim n \Bigg(
        \frac{ \var{\inv{B_t^q} - \inv{B_s^q}} }{ \expec{\abs{B_\cdot^q}} }
        - 2 \frac{ \expec{\inv{B_t^q} - \inv{B_s^q}} \cov{\abs{B_\cdot^q}}{\inv{B_t^q} - \inv{B_s^q}} }{ \expec{\abs{B_\cdot^q}}^2 }
        \\+ \frac{ \expec{\inv{B_t^q} - \inv{B_s^q}}^2 \var{\abs{B_\cdot^q}} }{ \expec{\abs{B_\cdot^q}}^3 }
        \Bigg) .
    \end{multline}
    From \Cref{eq: asymptotic moments J,eq: approx J J',eq: asymptotic expec J'}, we deduce that:
    \[
    \expec{\inv{B_t^q} - \inv{B_s^q}}
    = \expec{\abs{B_\cdot^q}} \frac{t-s}{(1-s)(1-t)}
    \le c_q (t-s)
    \]
    for some constant $c_q > 0$.
    Subsequently, the last term in \eqref{eq: asymptotic var J'} is bounded by $\O(1) (t-s)^2 \le \O(1) (t-s)$.
    Then, using $\inv{B_t^q} - \inv{B_s^q} \le \abs{B_\cdot^q}^2$, the second term in \eqref{eq: asymptotic var J'} is bounded by $\O(1) \expec{\inv{B_t^q} - \inv{B_s^q}} \le \O(1) (t-s)$.
    Therefore, by identifying \eqref{eq: asymptotic var J'} with \eqref{eq: asymptotic moments J} modulo \eqref{eq: approx J J'}, we obtain:
    \[
    \var{\inv{B_t^q} - \inv{B_s^q}} \le c_q' (t-s)
    \]
    for some constant $c_q' > 0$.
    Finally, we can write
    \[
    \expec{\left(\big. \inv{B_t^q} - \inv{B_s^q} \right)^2} \le c_q'(t-s) + c_q^2(t-s)^2 \le \left( c_q' + c_q^2 \right) (t-s)
    \]
    to conclude the proof.
\end{proof}

\begin{lemma}\label{lem: occ control from inv control}
    Let $\sigma, \sigma' \in\S_n$ such that for all $i\in[n]$, $r_i(\sigma) \le r_i(\sigma')$.
    Then, for all $\rho\in\S_r$:
    \[
    \abs{ \occ{\rho}{\sigma} - \occ{\rho}{\sigma'} }
    \le 2 \left( \inv{\sigma'} - \inv{\sigma} \right) n^{r-1} .
    \]
\end{lemma}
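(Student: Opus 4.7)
The plan is to induct on $N := \inv{\sigma'} - \inv{\sigma}$. For the base case $N = 0$, the coordinatewise inequality $r_i(\sigma) \le r_i(\sigma')$ together with the equality $\sum_i r_i(\sigma) = \sum_i r_i(\sigma')$ forces $r_i(\sigma) = r_i(\sigma')$ for all $i$; by the injectivity of $\psi_n$ (see \Cref{lem: ranges of phi_n and psi_n}), this gives $\sigma = \sigma'$, and the inequality is trivial.

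For the inductive step, I would pick any index $i$ with $r_i(\sigma) < r_i(\sigma')$. Since $r_i(\sigma) < r_i(\sigma') \le n-i$, there are $n-i-r_i(\sigma) > 0$ positions $k > i$ with $\sigma(k) > \sigma(i)$, so I may define $j > i$ to be the unique position satisfying $\sigma(j) = v := \min\{\sigma(k) : k > i,\, \sigma(k) > \sigma(i)\}$. Let $\sigma''$ be the permutation obtained from $\sigma$ by transposing the values at positions $i$ and $j$.

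The crux of the argument is to verify that $r_k(\sigma'') = r_k(\sigma)$ for every $k \ne i$, and $r_i(\sigma'') = r_i(\sigma) + 1$. I would carry this out by a case analysis on the location of $k$ relative to $\{i, j\}$ (namely $k < i$, $k = i$, $i < k < j$, $k = j$, and $k > j$). The key observation is that the minimality of $v$ forces every intermediate position $k \in (i, j)$ to satisfy either $\sigma(k) < \sigma(i)$ or $\sigma(k) > v$; this makes the contributions from positions $i$ and $j$ to the count $r_k$ cancel out after the swap. Granted this, we obtain $r(\sigma) \le r(\sigma'') \le r(\sigma')$ coordinatewise and $\inv{\sigma''} - \inv{\sigma} = 1$, so $\inv{\sigma'} - \inv{\sigma''} = N - 1$.

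Finally, since $\sigma$ and $\sigma''$ agree outside the two positions $\{i, j\}$, any $r$-subset $I \subseteq [n]$ with $\pat{I}{\sigma} \ne \pat{I}{\sigma''}$ must meet $\{i, j\}$; the number of such subsets is at most $2\binom{n-1}{r-1} \le 2 n^{r-1}$, so $\abs{\occ{\rho}{\sigma} - \occ{\rho}{\sigma''}} \le 2 n^{r-1}$. Applying the inductive hypothesis to the pair $(\sigma'', \sigma')$ and invoking the triangle inequality closes the induction. The main obstacle is the right-inversion bookkeeping in the case analysis above: verifying that the single transposition raises $r_i$ by exactly one while leaving every other $r_k$ unchanged relies crucially on the minimality in the definition of $v$.
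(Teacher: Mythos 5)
Your proposal is correct and follows essentially the same route as the paper's proof: induct on $\inv{\sigma'}-\inv{\sigma}$, reduce to a single elementary step realized by swapping $\sigma(i)$ with the minimal larger value to its right, and bound the change in $\occ{\rho}{\cdot}$ by the number of $r$-subsets meeting the two swapped positions. The only difference is that you spell out the right-inversion bookkeeping (and the explicit interpolation via $\sigma''$) which the paper dismisses as ``easily checked.''
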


\begin{proof}
    By induction, it suffices to prove the inequality when $\inv{\sigma'} = \inv{\sigma} + 1$.
    In that case, there exists $i_0$ such that $r_{i_0}(\sigma') = r_{i_0}(\sigma)+1$ and $r_{i}(\sigma') = r_{i}(\sigma)$ for all $i\ne i_0$.
    It is easily checked that $\sigma'$ is then obtained from $\sigma$ by swapping $\sigma(i_0)$ with $\sigma(j_0)$, where $\sigma(j_0) = \min\left\{ \sigma(j) : j>i_0 , \sigma(j) > \sigma(i_0) \right\}$.
    Consequently:
    \[
    \abs{ \occ{\rho}{\sigma} - \occ{\rho}{\sigma'} }
    \le \card{ I \in \binom{[n]}{r} \,:\, I \cap \{i_0,j_0\} \ne \emptyset }
    \le 2 n^{r-1}
    \]
    as announced.
\end{proof}

\begin{corollary}\label{cor: Holder e}
    For all $\pi\in\S$, $q\in(0,1)$, $\gamma\in(0,1)$, there exists $c>0$ such that:
    \[
    \text{for all }0\le s\le t\le q,\quad
    \abs{e_{\pi,t} - e_{\pi,s}} \le c(t-s)^\gamma .
    \]
\end{corollary}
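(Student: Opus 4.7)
The plan is to reduce the claim to a Hölder estimate on the map $u \mapsto \expec{\occ{\rho}{B_u^{q'}}}$ for the individual blocks $\rho$ of $\pi$ via the product identity \eqref{eq: double expression for e}, and then to extract the desired exponent from \Cref{lem: Lipschitz jumps in q-block} through an interpolation argument.

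First, fix an auxiliary parameter $q' \in (q,1)$ so that every pair $s \le t$ in $[0,q]$ lies in $[0,q']$, and apply \eqref{eq: double expression for e} to write $e_{\pi,u} = \prod_{j=1}^{d} \frac{\expec{\occ{\pi_j}{B_u^{q'}}}}{\expec{\abs{B^{q'}}}}$ for every $u \in [0,q']$, where $(B_u^{q'})_{u\in[0,q']} \sim \ContinuousBlock{q'}$. Each such factor is bounded uniformly in $u$ by a constant depending only on $\abs{\pi_k}$ and $q'$, thanks to the exponential moments of $\abs{B^{q'}}$ from \cite[Proposition~4.6]{BB17}. The telescoping identity $\prod_j a_j - \prod_j b_j = \sum_j (a_j-b_j)\prod_{k<j}a_k\prod_{k>j}b_k$ thus reduces the claim to proving, for each block $\rho = \pi_j$ of size $r := \abs{\rho}$, that $\abs{\expec{\occ{\rho}{B_t^{q'}}} - \expec{\occ{\rho}{B_s^{q'}}}} \le C(t-s)^\gamma$.

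Next, since $(B_u^{q'})_{u\in[0,q']}$ is constructed from pathwise-monotone birth processes, the right-inversion counts of $B_s^{q'}$ are pointwise dominated by those of $B_t^{q'}$ almost surely. \Cref{lem: occ control from inv control}, applied with $n = \abs{B^{q'}}$, then yields
\[
\bigl|\occ{\rho}{B_t^{q'}} - \occ{\rho}{B_s^{q'}}\bigr| \le 2\, X_{s,t}\, \abs{B^{q'}}^{r-1}, \qquad \text{where } X_{s,t} := \inv{B_t^{q'}} - \inv{B_s^{q'}}.
\]
Taking expectations and exploiting that $X_{s,t}$ is $\N$-valued, Hölder's inequality with conjugate exponents $1/\alpha$ and $1/(1-\alpha)$ (for an $\alpha \in (0,1)$ to be fixed later) gives
\[
\expec{X_{s,t}\, \abs{B^{q'}}^{r-1}} = \expec{X_{s,t}\, \abs{B^{q'}}^{r-1}\, \One{X_{s,t} \ge 1}} \le \expec{\bigl(X_{s,t}\, \abs{B^{q'}}^{r-1}\bigr)^{1/\alpha}}^{\alpha}\, \prob{X_{s,t} \ge 1}^{1-\alpha}.
\]
Using the pathwise bound $X_{s,t} \le \abs{B^{q'}}^2$, the first factor is at most $\expec{\abs{B^{q'}}^{(r+1)/\alpha}}^{\alpha}$, which is a finite constant depending only on $\alpha$, $r$, $q'$; Markov's inequality combined with \Cref{lem: Lipschitz jumps in q-block} bounds the second factor by $\bigl(c_{q'}(t-s)\bigr)^{1-\alpha}$. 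Choosing $\alpha = 1-\gamma$ then yields the desired $C_\gamma (t-s)^\gamma$ bound.

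The main obstacle lies in reaching Hölder exponents arbitrarily close to $1$: a naive Cauchy--Schwarz using only the second-moment estimate of \Cref{lem: Lipschitz jumps in q-block} would produce merely $(t-s)^{1/2}$. The crucial observation is that $X_{s,t}$ is integer-valued with vanishing success probability $\prob{X_{s,t} \ge 1} = O(t-s)$, and interpolating this against the deterministic bound $X_{s,t} \le \abs{B^{q'}}^2$ (whose moments are all finite) produces arbitrarily small losses in the exponent.
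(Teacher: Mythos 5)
Your proposal is correct and follows essentially the same route as the paper: reduce via the product formula \eqref{eq: double expression for e} to a H\"older bound on $u\mapsto\expec{\occ{\rho}{B^q_u}}$ for each block, apply \Cref{lem: occ control from inv control} pathwise (using monotonicity of the right-inversion counts), and then interpolate using \Cref{lem: Lipschitz jumps in q-block} together with the finiteness of all moments of $\abs{B^q}$. The only (minor, valid) deviation is the final interpolation: you split on the event $\{X_{s,t}\ge1\}$ and use Markov with the first-moment estimate, whereas the paper applies H\"older with exponents $1/\gamma$, $1/(1-\gamma)$ and the integer-power bound $X_{s,t}^{1/\gamma}\le X_{s,t}^2$ (hence its w.l.o.g.\ reduction to $\gamma\ge\tfrac12$ and its use of the second-moment estimate), so your variant even needs slightly less from \Cref{lem: Lipschitz jumps in q-block}.
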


\begin{proof}
    We can assume w.l.o.g.~that $\gamma\ge1/2$.
    Recall that
    $e_{\pi,t} =\prod_{j=1}^d \frac{\expec{\occ{\pi_j}{B^q_{t}}}}{\expec{\abs{B^q}}}$
    for all $t\in[0,q]$ by \Cref{eq: double expression for e}.
    Therefore it suffices to prove that for all $\rho\in\S$, the function $t\in[0,q] \mapsto \expec{\occ{\rho}{B^q_{t}}}$ is $\gamma$-H\"older-continuous.
    Using \Cref{lem: occ control from inv control} and H\"older's inequality, we can write for all $0\le s\le t\le q$:
    \begin{multline*}
        \abs{\big. \expec{\occ{\rho}{B^q_{t}}} - \expec{\occ{\rho}{B^q_{s}}} }
        \le \expec{ 2 \left(\big. \inv{B^q_{t}} - \inv{B^q_{s}} \right) \abs{B_\cdot^q}^{\abs{\rho}-1} }
        \\\le 2 \expec{ \left(\big. \inv{B^q_{t}} - \inv{B^q_{s}} \right)^{1/\gamma} }^\gamma \expec{ \abs{B^q}^{(\abs{\rho}-1)/(1-\gamma)} }^{1-\gamma} .
    \end{multline*}
    Then, using the inequality $\left(\big. \inv{B^q_{t}} - \inv{B^q_{s}} \right)^{1/\gamma} \le \left(\big. \inv{B^q_{t}} - \inv{B^q_{s}} \right)^2$, \Cref{lem: Lipschitz jumps in q-block} and the fact that $\abs{B^q}$ has finite moments of all orders, we deduce that:
    \begin{equation*}
        \abs{\big. \expec{\occ{\rho}{B^q_{t}}} - \expec{\occ{\rho}{B^q_{s}}} }
        \le c (t-s)^\gamma
    \end{equation*}
    for some $c>0$.
    This concludes the proof.
\end{proof}

\begin{corollary}\label{cor: Holder H}
    For all $\pi\in\S$, $0< \epsilon< q< 1$, and $\gamma\in(0,1)$, there exists $c>0$ such that:
    \[
    \text{for all }\epsilon\le s, t, u\le q,\quad
    \abs{H_{\pi}(t,u) - H_{\pi}(s,u)} \le c\abs{t-s}^\gamma ,
    \]
    where $H_\pi$ is defined by \eqref{eq: formula covariance function Donsker}.
\end{corollary}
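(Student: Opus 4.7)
The plan is to adapt the argument of \Cref{cor: Holder e} to $H_\pi$, whose formula \eqref{eq: formula covariance function Donsker} expresses $H_\pi(s,t)$ as a finite linear combination of terms of the form $e_{\pi,s}e_{\pi,t}\,F(s,t)$, where each $F(s,t)$ is either a constant (the variance $\var{\abs{B^q}/\expec{\abs{B^q}}}$) or a covariance $\cov{A_\tau}{B_{\tau'}}$ of normalised random variables of the form $X_\tau/\expec{X_\tau}$ with $X_\tau\in\{\occ{\pi_j}{B^q_\tau},\,\abs{B^q}\}$ and $\tau,\tau'\in\{s,t\}$. The first step is the algebraic splitting
\[
e_{\pi,t}e_{\pi,u}F(t,u) - e_{\pi,s}e_{\pi,u}F(s,u) = e_{\pi,u}(e_{\pi,t}-e_{\pi,s})F(t,u) + e_{\pi,s}e_{\pi,u}\bigl(F(t,u)-F(s,u)\bigr),
\]
which isolates the H\"older-regular factor $e_\pi$ (handled directly by \Cref{cor: Holder e}) from the covariance variation $F(t,u)-F(s,u)$. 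To turn the first summand into an $\Landau{\O}{}{(t-s)^\gamma}$, I would use that $F(t,u)$ is uniformly bounded on $[\epsilon,q]^2$: this follows from $\abs{B^q}$ having moments of all orders, and from $\inf_{\tau\in[\epsilon,q]}\expec{\occ{\rho}{B^q_\tau}}>0$ for every pattern $\rho$ (by continuity and positivity on a compact interval).

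The heart of the proof is bounding the covariance differences $F(t,u)-F(s,u)$. I would use bilinearity of $\mathrm{Cov}$ to write $F(t,u)-F(s,u) = \expec{(A_t-A_s)(B_u-\expec{B_u})}$, then decompose
\[
A_t - A_s = \frac{X_t-X_s}{\expec{X_t}} + X_s\,\frac{\expec{X_s}-\expec{X_t}}{\expec{X_t}\expec{X_s}}.
\]
\Cref{cor: Holder e} handles the second summand directly (yielding the required $(t-s)^\gamma$ factor), and the denominators are bounded below by the same lower-bound argument as above. The remaining task is to control the expectation $\expec{\abs{X_t-X_s}\cdot\abs{B_u-\expec{B_u}}}$.

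This last expectation is the exact analogue of the one bounded in the proof of \Cref{cor: Holder e}. My plan is to use \Cref{lem: occ control from inv control} to dominate $\abs{X_t-X_s}$ by $2(\inv{B^q_t}-\inv{B^q_s})\abs{B^q}^{\abs{\pi_j}-1}$, and then apply H\"older's inequality with exponents $1/\gamma$ and $1/(1-\gamma)$ for $\gamma\in[\tfrac12,1)$. The integer-valued trick $(\inv{B^q_t}-\inv{B^q_s})^{1/\gamma}\le(\inv{B^q_t}-\inv{B^q_s})^2$ (valid since $1/\gamma\le 2$) combined with \Cref{lem: Lipschitz jumps in q-block} yields the desired factor $(t-s)^\gamma$, while the other factor remains bounded thanks to the all-order moments of $\abs{B^q}$ and of the normalised variable $B_u$. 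A $\gamma$-H\"older bound for some $\gamma\in[\tfrac12,1)$ then implies a $\gamma'$-H\"older bound for every $\gamma'\in(0,\gamma]$, covering the full range $(0,1)$.

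The main obstacle is essentially bookkeeping: one must keep the normalising factors $1/\expec{X_\tau}$ uniformly bounded, which is the reason for requiring $\epsilon>0$ in the statement. Indeed, $\expec{\occ{\pi_j}{B^q_0}}=0$ whenever $\pi_j$ is not an identity pattern, so without the truncation the decomposition of $A_t-A_s$ would break down near $\tau=0$; away from this degeneracy, the argument is a direct multi-variable extension of the one-variable estimate carried out in \Cref{cor: Holder e}.
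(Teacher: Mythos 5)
Your proposal is correct and follows essentially the same route as the paper's proof: the same term-by-term splitting of $e_{\pi,u}e_{\pi,t}f(t,u)-e_{\pi,u}e_{\pi,s}f(s,u)$ into an $e$-increment part (handled by \Cref{cor: Holder e}) and a covariance-increment part, the same key inputs \Cref{lem: occ control from inv control} and \Cref{lem: Lipschitz jumps in q-block}, the same H\"older-exponent choice with the integer-valued bound $\left(\inv{B^q_t}-\inv{B^q_s}\right)^{1/\gamma}\le\left(\inv{B^q_t}-\inv{B^q_s}\right)^2$, and the same reduction to $\gamma\ge\tfrac12$, with only cosmetic differences in bookkeeping (you write the covariance increment as a single expectation and split the normalised variable, while the paper applies H\"older and Minkowski in two stages). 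Your explanation of why $\epsilon>0$ is needed, namely keeping $\expec{\occ{\pi_i}{B^q_\tau}}$ bounded away from $0$ on $[\epsilon,q]$, matches the step the paper singles out as the only one where $\epsilon$ is relevant.
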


\begin{proof}
    We can assume w.l.o.g.~that $\gamma\ge1/2$.
    Let $B_\cdot^q \sim \ContinuousBlock{q}$.
    Write $\pi= \pi_1\oplus \dots\oplus \pi_d$ as a sum of blocks, and set $\pi_0 := 1$.
    Then for each $0\le i,j\le d$, define 
    \[
    \text{for all }0\le s,t< 1,\quad
    f_{i,j}(s,t) := \cov{ \frac{\occ{\pi_i}{B_s^q}}{\expec{\occ{\pi_i}{B_s^q}}} }{ \frac{\occ{\pi_j}{B_t^q}}{\expec{\occ{\pi_j}{B_t^q}}} } 
    \]
    where $\frac{\occ{\rho}{B_0^q}}{\expec{\occ{\rho}{B_0^q}}} := 1$ for all $\rho\notin\{\id_r, r\ge1\}$.
    Since $\occ{\pi_0}{B} = \abs{B}$, \Cref{eq: formula covariance function Donsker} rewrites as:
    \begin{equation}\label{eq: simplified expression H}
        \text{for all }0\le s,t< 1,\quad
        H_\pi(s,t) = \sum_{0\le i,j\le d} c_{i,j,\pi,q} \, e_{\pi,s} \, e_{\pi,t} \, f_{i,j}(s,t)
    \end{equation}
    for some constants $c_{i,j,\pi,q}>0$.
    Therefore it suffices to prove that for each fixed $0\le i,j\le d$, there exists $c>0$ such that:
    \[
    \text{for all }\epsilon\le s, t, u\le q,\quad
    \abs{\big. e_{\pi,u} e_{\pi,t} f_{i,j}(t,u) - e_{\pi,u} e_{\pi,s} f_{i,j}(s,u) } \le c\abs{t-s}^\gamma .
    \]
    Since $\abs{B^q}$ has finite moments of all orders, we can bound:
    \begin{multline}\label{eq: bound increment H}
        \abs{\big. e_{\pi,u} e_{\pi,t} f_{i,j}(t,u) - e_{\pi,u} e_{\pi,s} f_{i,j}(s,u) }
        \le \abs{\big. e_{\pi,u} \left(\big. e_{\pi,t} - e_{\pi,s} \right) f_{i,j}(t,u) }
        + \abs{\big. e_{\pi,u} e_{\pi,s} \left(\big. f_{i,j}(t,u) - f_{i,j}(s,u) \right)}
        \\\le \O(1) \abs{\big. e_{\pi,t} - e_{\pi,s} }
        + \O(1) \abs{\big. e_{\pi_j,u} e_{\pi_i,s} \left(\big. f_{i,j}(t,u) - f_{i,j}(s,u) \right)}
    \end{multline}
    where the constants inside the $\O(1)$'s only depend on $\pi$ and $q$.
    The first term of \eqref{eq: bound increment H} is bounded by $\O(1) \abs{t-s}^\gamma$ by \Cref{cor: Holder e}.
    Then, let $\alpha,\bar\alpha\in(1,\infty)$ be arbirtrary H\"older conjugates.
    Recall that $\pi_i$ and $\pi_j$ are indecomposable, and use \eqref{eq: double expression for e} to write:
    \begin{align}\label{eq: bound increment cov}
        &\abs{\big. e_{\pi_j,u} e_{\pi_i,s} \left(\big. f_{i,j}(t,u) - f_{i,j}(s,u) \right)} \nonumber  
        \\&= e_{\pi_j,u} e_{\pi_i,s} \abs{ \cov{ \frac{\occ{\pi_j}{B_u^q}}{\expec{\occ{\pi_j}{B_u^q}}} }{ \frac{\occ{\pi_i}{B_t^q}}{\expec{\occ{\pi_i}{B_t^q}}} - \frac{\occ{\pi_i}{B_s^q}}{\expec{\occ{\pi_i}{B_s^q}}} } } \nonumber
        \\&\le \expec{ e_{\pi_j,u} \abs{ \frac{\occ{\pi_j}{B_u^q}}{\expec{\occ{\pi_j}{B_u^q}}} - 1 } \cdot e_{\pi_i,s} \abs{ \frac{\occ{\pi_i}{B_t^q}}{\expec{\occ{\pi_i}{B_t^q}}} - \frac{\occ{\pi_i}{B_s^q}}{\expec{\occ{\pi_i}{B_s^q}}} } } \nonumber
        \\&\le \expec{ \abs{ \frac{\occ{\pi_j}{B_u^q}}{\expec{\abs{B^q}}} - e_{\pi_j,u} }^{\bar\alpha} }^{1/\bar\alpha} \cdot e_{\pi_i,s} \, \expec{ \abs{ \frac{\occ{\pi_i}{B_t^q}}{\expec{\occ{\pi_i}{B_t^q}}} - \frac{\occ{\pi_i}{B_s^q}}{\expec{\occ{\pi_i}{B_s^q}}} }^\alpha }^{1/\alpha} .
    \end{align}
    The first term of \eqref{eq: bound increment cov} is a $\O(1)$.
    For the second term, by Minkowski's inequality we have:
    \begin{multline*}
        \expec{ \abs{ \frac{\occ{\pi_i}{B_t^q}}{\expec{\occ{\pi_i}{B_t^q}}} - \frac{\occ{\pi_i}{B_s^q}}{\expec{\occ{\pi_i}{B_s^q}}} }^\alpha }^{1/\alpha}
        \\\le \expec{ \abs{ \frac{\occ{\pi_i}{B_t^q}}{\expec{\occ{\pi_i}{B_t^q}}} - \frac{\occ{\pi_i}{B_t^q}}{\expec{\occ{\pi_i}{B_s^q}}} }^\alpha }^{1/\alpha}
        + \expec{ \abs{ \frac{\occ{\pi_i}{B_t^q}}{\expec{\occ{\pi_i}{B_s^q}}} - \frac{\occ{\pi_i}{B_s^q}}{\expec{\occ{\pi_i}{B_s^q}}} }^\alpha }^{1/\alpha} .
    \end{multline*}
    First use \eqref{eq: double expression for e} to write:
    \begin{multline*}
        e_{\pi_i,s} \expec{ \abs{ \frac{\occ{\pi_i}{B_t^q}}{\expec{\occ{\pi_i}{B_t^q}}} - \frac{\occ{\pi_i}{B_t^q}}{\expec{\occ{\pi_i}{B_s^q}}} }^\alpha }^{1/\alpha}
        \le \O(1)
        e_{\pi_i,s} \abs{\big. \frac{1}{\expec{\occ{\pi_i}{B_t^q}}} - \frac{1}{\expec{\occ{\pi_i}{B_s^q}}} } 
        \\\le \frac{\O(1)}{\expec{\occ{\pi_i}{B_t^q}}} \abs{\big. \expec{\occ{\pi_i}{B_s^q}} - \expec{\occ{\pi_i}{B_t^q}} } 
    \end{multline*}
    which is bounded by $\O(1) \abs{t-s}^\gamma$ for $\epsilon\le s\le t\le q$ by \Cref{cor: Holder e}, where the constant inside $\O(1)$ only depends on $\pi$, $\gamma$, $\epsilon$ and $q$ (this is the only step at which $\epsilon$ is relevant).
    Then let $\beta,\bar\beta\in(1,\infty)$ be arbitrary H\"older conjugates, and use \Cref{lem: occ control from inv control} to write:
    \begin{align*}
        &e_{\pi_i,s} \expec{ \abs{ \frac{\occ{\pi_i}{B_t^q}}{\expec{\occ{\pi_i}{B_s^q}}} - \frac{\occ{\pi_i}{B_s^q}}{\expec{\occ{\pi_i}{B_s^q}}} }^\alpha }^{1/\alpha} 
        \\&= \frac{1}{\expec{\abs{B^q}}} \expec{ \abs{\big. \occ{\pi_i}{B_t^q} - \occ{\pi_i}{B_s^q} }^\alpha }^{1/\alpha}
        \\&\le \O(1)\, \expec{ \left(\big. \inv{B^q_{t}} - \inv{B^q_{s}} \right)^\alpha \abs{B_\cdot^q}^{\alpha(\abs{\pi_i}-1)} }^{1/\alpha}
        \\&\le \O(1) \expec{ \left(\big. \inv{B^q_{t}} - \inv{B^q_{s}} \right)^{\alpha\beta} }^{1/(\alpha\beta)}
        \expec{ \abs{B_\cdot^q}^{\alpha\bar\beta(\abs{\pi_i}-1)} }^{1/(\alpha\bar\beta)} .
    \end{align*}
    If we choose $\alpha, \beta$ to satisfy $\alpha\beta = 1/\gamma$, then this is bounded by $\O(1) \abs{t-s}^\gamma$ by \Cref{lem: Lipschitz jumps in q-block}.
    We deduce that \eqref{eq: bound increment cov} is bounded by $\O(1) \abs{t-s}^\gamma$, where the constant inside $\O(1)$ only depends on $\pi$, $\gamma$, $\epsilon$ and $q$, therefore such a bound holds for \eqref{eq: bound increment H}.
    By \eqref{eq: simplified expression H}, this concludes the proof.
\end{proof}

The proof of \Cref{prop: continuity} now follows readily.

\begin{proof}[Proof of \Cref{prop: continuity}]
    The first item is just \Cref{cor: Holder e}.
    
    For the second item, we study the moments of $X_{\pi,t} - X_{\pi,s}$ for fixed $0< \epsilon< q< 1$ and all $\epsilon\le s\le t\le q$.
    The random variable $X_{\pi,t} - X_{\pi,s}$ follows the $\Normal{0}{\big. H_{\pi}(t,t) + H_{\pi}(s,s) - 2H_{\pi}(s,t)}$ distribution, thus for all $p>0$:
    \begin{multline*}
        \expec{ \abs{X_{\pi,t} - X_{\pi,s}}^p }
        = c_p \left(\big. H_{\pi}(t,t) + H_{\pi}(s,s)-2H_{\pi}(s,t) \right)^{p/2} 
        \\\le 2^p c_p \left( \left|\big. H_{\pi}(t,t) - H_{\pi}(s,t) \right|^{p/2} + \left|\big. H_{\pi}(s,t) - H_{\pi}(s,s) \right|^{p/2} \right) ,
    \end{multline*}
    where $c_p = \expec{ \abs{Y}^p }$ for $Y\sim\Normal{0}{1}$.
    Let $\gamma\in(0,1)$.
    Using \Cref{cor: Holder H}, we can write
    \begin{equation*}
        \expec{ \abs{ X_{\pi,t} - X_{\pi,s} }^p } \le c' (t-s)^{\gamma p/2} 
    \end{equation*}
    for some $c'>0$ and all $\epsilon\le s\le t\le q$.
    Choosing $p$ such that $\gamma p / 2 > 1$ allows us to apply the Kolmogorov continuity theorem (see e.g.~\cite[Theorem~4.23]{Kallenberg_2006}):
    there exists a version of $\left(X_{\pi,t}\right)_{t\in[\epsilon,q]}$ that is locally $\tilde \gamma$-H\"older-continuous, for every $\tilde \gamma < \frac{\tfrac12 \gamma p -1}{p}$.
    Since this holds for any $0<\epsilon<q<1$, $\gamma\in(0,1)$ and large enough $p$, this concludes the proof.
\end{proof}

\section*{Appendix: the number of inversions}\label{sec: appendix 1}

In this appendix, we provide a proof of \Cref{th: inv CLT if above thermo}.
Note that the same techniques can be used to study the asymptotics of $\inv{\tau_n}$ for different regimes of $q=q_n$.

\begin{proof}[Proof of \Cref{th: inv CLT if above thermo}]
    Using \Cref{lem: left- and right-inversion numbers of finite Mallows}, we can write
    \begin{equation}\label{eq: inversions as sum of independent}
        \inv{\tau_n} = \sum_{k=1}^n L_k - n 
    \end{equation}
    where $L_k \sim \TGeom{k}{q}$ are independent random variables.
    According to \cite[Property~4]{DR00}, we have:
    \begin{equation*}
        \expec{\inv{\tau_{n+1}}} = \frac{nq}{1\m q} - \sum_{j=2}^{n+1} \frac{j q^j}{1\m q^j}
        \quad\text{and}\quad
        \var{\inv{\tau_{n+1}}} = \frac{nq}{(1\m q)^2} - \sum_{j=2}^{n+1} \frac{j^2 q^j}{(1\m q^j)^2} .
    \end{equation*}
    The rest of the proof is standard computation.
    Since $n(1\m q)\to0$, we can write uniformly in $j\in[n+1]$:
    \begin{align*}
        q^j &= \exp\left[ j \log\left( 1-(1\m q) \right) \right]\\
        &= \exp\left[ j \left( -(1\m q) - \tfrac12(1\m q)^2 - \tfrac13(1\m q)^3 \cdot (1\p o(1)) \right) \right]\\
        &= \exp\left[ -j(1\m q) - \tfrac12j(1\m q)^2 - \tfrac13j(1\m q)^3 \cdot (1\p o(1)) \right]\\
        &= 1 -j(1\m q) - \tfrac12j(1\m q)^2 - \tfrac13j(1\m q)^3 \cdot (1\p o(1)) + \tfrac12\left( j(1\m q) + \tfrac12j(1\m q)^2 + \tfrac13j(1\m q)^3 \cdot (1\p o(1)) \right)^2\\
        &\quad - \tfrac16\left( j(1\m q) + \tfrac12j(1\m q)^2 + \tfrac13j(1\m q)^3 \cdot (1\p o(1)) \right)^3 \cdot (1\p o(1))\\
        &= 1 - j(1\m q) + (1\m q)^2\left(\tfrac12j^2 - \tfrac12j\right) + (1\m q)^3\left(-\tfrac16j^3 + \tfrac12j^2 - \tfrac13j\right) (1\p o(1)) .
    \end{align*}
    Then:
    \begin{align*}
        \frac{j q^j}{1\m q^j} &= j\left( 1 - j(1\m q) + (1\m q)^2\left(\tfrac12j^2 - \tfrac12j\right) (1\p o(1)) \right)\\
        &\quad \cdot\left( j(1\m q) - (1\m q)^2\left(\tfrac12j^2 - \tfrac12j\right) - (1\m q)^3\left(-\tfrac16j^3 + \tfrac12j^2 - \tfrac13j\right) (1\p o(1)) \right)^{-1}\\
        &= j\left( 1 - j(1\m q) + (1\m q)^2\left(\tfrac12j^2 - \tfrac12j\right) (1\p o(1)) \right)\\
        &\quad \cdot j^{-1}(1\m q)^{-1} \cdot \left( 1 - (1\m q)\left(\tfrac12j - \tfrac12\right) - (1\m q)^2\left(-\tfrac16j^2 + \tfrac12j - \tfrac13\right) (1\p o(1)) \right)^{-1}\\
        &= \left( (1\m q)^{-1} - j + (1\m q)\left(\tfrac12j^2 - \tfrac12j\right) (1\p o(1)) \right)\\
        &\quad \cdot \left( 1 + (1\m q)\left(\tfrac12j - \tfrac12\right) + (1\m q)^2\left(-\tfrac16j^2 + \tfrac12j - \tfrac13\right) (1\p o(1)) + (1\m q)^2\left(\tfrac12j - \tfrac12\right)^2 \cdot (1\p o(1)) \right)\\
        &= \left( (1\m q)^{-1} - j + (1\m q)\left(\tfrac12j^2 - \tfrac12j\right) (1\p o(1)) \right)\\
        &\quad \cdot \left( 1 + (1\m q)\left(\tfrac12j - \tfrac12\right) + (1\m q)^2\left(\tfrac{1}{12}j^2 - \tfrac{1}{12}\right) (1\p o(1)) \right)\\
        &= (1\m q)^{-1} + \left(-\tfrac12j-\tfrac12\right) + (1\m q)\left(\tfrac{1}{12}j^2 - \tfrac{1}{12}\right)(1\p o(1)) .
    \end{align*}
    We can deduce the desired estimate for the mean:
    \begin{align*}
        \expec{\inv{\tau_{n+1}}} &= \frac{n}{1\m q} - n - \sum_{j=2}^{n+1} (1\m q)^{-1} + \left(-\tfrac12j-\tfrac12\right) + (1\m q)\left(\tfrac{1}{12}j^2 - \tfrac{1}{12}\right)(1\p o(1))\\
        &= \tfrac14n^2 - \tfrac{1}{36}(1\m q)n^3 + \Landau{o}{}{n^{3/2} \vee (1\m q)n^3} .
    \end{align*}
    Now, let us turn to the variance.
    We have, uniformly in $j\in[n+1]$:
    \begin{align*}
        \frac{j^2 q^j}{(1\m q^j)^2} 
        &= j^2\left( 1 - j(1\m q) + (1\m q)^2\left(\tfrac12j^2 - \tfrac12j\right) (1\p o(1)) \right)\\
        &\quad \cdot\left( j(1\m q) - (1\m q)^2\left(\tfrac12j^2 - \tfrac12j\right) - (1\m q)^3\left(-\tfrac16j^3 + \tfrac12j^2 - \tfrac13j\right) (1\p o(1)) \right)^{-2}\\
        &= j^2\left( 1 - j(1\m q) + (1\m q)^2\left(\tfrac12j^2 - \tfrac12j\right) (1\p o(1)) \right)\\
        &\quad \cdot j^{-2}(1\m q)^{-2} \cdot \left( 1 - (1\m q)\left(\tfrac12j - \tfrac12\right) - (1\m q)^2\left(-\tfrac16j^2 + \tfrac12j - \tfrac13\right) (1\p o(1)) \right)^{-2}\\
        &= \left( (1\m q)^{-2} - j(1\m q)^{-1} + \left(\tfrac12j^2 - \tfrac12j\right) (1\p o(1)) \right)\\
        &\quad \cdot \left( 1 + 2(1\m q)\left(\tfrac12j - \tfrac12\right) + 2(1\m q)^2\left(-\tfrac16j^2 + \tfrac12j - \tfrac13\right) (1\p o(1)) + 3(1\m q)^2\left(\tfrac12j - \tfrac12\right)^2 \cdot (1\p o(1)) \right)\\ 
        &= \left( (1\m q)^{-2} - j(1\m q)^{-1} + \left(\tfrac12j^2 - \tfrac12j\right) (1\p o(1)) \right)\\
        &\quad \cdot \left( 1 + (1\m q)\left(j - 1\right) + (1\m q)^2\left(\tfrac{5}{12}j^2 - \tfrac12j + \tfrac{1}{12}\right) (1\p o(1)) \right)\\
        &= (1\m q)^{-2} - (1\m q)^{-1} + \left(-\tfrac{1}{12}j^2 + \tfrac{1}{12}\right)(1\p o(1)) + \Landau{\O}{}{(1\m q)j^3} ,
    \end{align*}
    therefore:
    \begin{align*}
        \var{\inv{\tau_{n+1}}} 
        &= \frac{n}{(1\m q)^2} - \frac{n}{(1\m q)} - \sum_{j=2}^{n+1} (1\m q)^{-2} - (1\m q)^{-1} + \left(-\tfrac{1}{12}j^2 + \tfrac{1}{12}\right)(1\p o(1)) + \Landau{\O}{}{(1\m q)j^3} \\
        &= \tfrac{1}{36}n^3 + \Landau{o}{}{n^3} .
    \end{align*}
    It remains to deduce the announced asymptotic normality; 
    it suffices to prove it for $\sum_{k=1}^n L_k$.
    Since this is a sum of independent variables, it suffices to check Lyapunov's condition.
    For any $\delta>0$, we have:
    \begin{align*}
        \frac{1}{\var{\inv{\tau_n}}^{1+\delta/2}} \sum_{k=1}^n \expec{\abs{L_k - \E L_k}^{2+\delta}}
        \le \frac{\O(1)}{n^{3+3\delta/2}} \sum_{k=1}^n k^{2+\delta}
        \le \frac{\O(1)}{n^{3+3\delta/2}} \, n^{3+\delta}
        \cv{n\to\infty} 0 .
    \end{align*}
    We can thus conclude with Lyapunov's CLT.
\end{proof}

\section*{Acknowledgements}

The author is very grateful to his advisor Valentin F\'eray for many insightful discussions and valuable suggestions.

\bibliographystyle{alphaurl}
\bibliography{bibli}

\end{document}